\theoremstyle{plain}
\newtheorem{thm}{Theorem}[section]
\newtheorem{lem}[thm]{Lemma}
\newtheorem{cor}[thm]{Corollary}
\newtheorem{prop}[thm]{Propostion}
\theoremstyle{definition}
\newtheorem{defn}[thm]{Definition}
\theoremstyle{remark}
\newtheorem{rmk}[thm]{Remark}
\newtheorem{ex}[thm]{Example}
\newcommand{\dd}{\mathrm{d}}
\newcommand{\R}{\mathbb{R}}
\newcommand{\F}{\mathcal{F}}
\newcommand{\N}{\mathbb{N}}
\renewcommand{\L}{\mathcal{L}}
\newcommand{\1}{\mathbf{1}}
\newcommand{\C}{\mathcal{C}}
\newcommand{\A}{\mathcal{A}}
\newcommand{\D}{\mathcal{D}}
\newcommand{\ito}{I^{\mathrm{It\hat{o}}}}
\newcommand{\dD}{\mathrm{D}}
\title{A Fourier approach to pathwise stochastic integration}
\author{
  Massimiliano Gubinelli\thanks{M.G. is supported by a Junior fellowship of the Institut Universitaire de France (IUF) and  by the ANR Project
ECRU (ANR-09-BLAN-0114-01)}
 \\ CEREMADE \& CNRS UMR 7534 \\ Universit{\'e} Paris-Dauphine\\
 and Institut Universitaire de France \\
  \texttt{gubinelli@ceremade.dauphine.fr}
  \and
  Peter Imkeller \\ Institut f\"ur Mathematik \\ Humboldt-Universit\"at zu Berlin\\
  \texttt{imkeller@math.hu-berlin.de}
  \and
  Nicolas Perkowski\thanks{N.P. is supported by the Fondation Sciences Math\'ematiques de Paris (FSMP) and by a public grant overseen by the French National Research Agency (ANR) as part of the ``Investissements d'Avenir'' program (reference: ANR-10-LABX-0098), and acknowledges generous support from Humboldt-Universit\"at zu Berlin, where a major part of this work was completed.}
 \\  CEREMADE \& CNRS UMR 7534  \\ Universit{\'e} Paris-Dauphine\\
  \texttt{perkowski@ceremade.dauphine.fr}
}
\begin{document}
\maketitle

\begin{abstract}
We develop a Fourier approach to rough path integration, based on the series decomposition of continuous functions in terms of Schauder functions. Our approach is rather elementary, the main ingredient being a simple commutator estimate, and it leads to recursive algorithms for the calculation of pathwise stochastic integrals, both of It\^o and of Stratonovich type. We apply it to solve stochastic differential equations in a pathwise manner. 
\end{abstract}

\tableofcontents

\section{Introduction}

The theory of rough paths~\cite{Lyons1998} has recently been extended to a multiparameter setting~\cite{Hairer2014Regularity,Gubinelli2012}. While \cite{Hairer2014Regularity} has a much wider range of applicability, both approaches allow to solve many interesting SPDEs that were well out of reach with previously existing methods; for example the continuous parabolic Anderson in dimension two~\cite{Hairer2014Regularity,Gubinelli2012}, the three-dimensional stochastic quantization equation~\cite{Hairer2014Regularity,Catellier2013}, the KPZ equation~\cite{Hairer2013KPZ,Gubinelli2014}, or the three-dimensional stochastic Navier Stokes equation~\cite{Zhu2014,Zhu2014Discretization}. Our methods developed in~\cite{Gubinelli2012} are based on harmonic analysis, on Littlewood-Paley decompositions of tempered distributions, and on a simple commutator lemma. This requires a non-negligible knowledge of Littlewood-Paley theory and Besov spaces, while at the same time the application to classical rough path SDEs is not quite straightforward. That is why here we develop the approach of~\cite{Gubinelli2012} in the slightly different language of Haar / Schauder functions, which allows us to communicate our basic ideas while requiring only very basic knowledge in analysis. Moreover, in the Haar Schauder formulation the application to SDEs poses no additional technical challenges.

It is a classical result of Ciesielski~\cite{Ciesielski1960} that $C^\alpha := C^\alpha([0,1],\R^d)$, the space of $\alpha$--H\"older continuous functions on $[0,1]$ with values in $\R^d$, is isomorphic to $\ell^\infty(\R^d)$, the space of bounded sequences with values in $\R^d$. The isomorphism gives a Fourier decomposition of a H\"older-continuous function $f$ as
\begin{align*}
   f = \sum_{p,m} \langle H_{pm}, \dd f \rangle G_{pm},
\end{align*}
where $(H_{pm})$ are the Haar functions and $(G_{pm})$ are the Schauder functions. Ciesielski proved that a continuous function $f$ is in $C^{\alpha}([0,1],\R^d)$ if and only if the coefficients $(\langle H_{pm}, \dd f\rangle)_{p,m}$ decay rapidly enough. 
Following Ciesielski's work, similar isomorphisms have been developed for many Fourier and wavelet bases, showing that the regularity of a function is encoded in the decay of its coefficients in these bases; see for example Triebel~\cite{Triebel2006}.

But until this day, the isomorphism based on Schauder functions plays a special role in stochastic analysis, because the coefficients in the Schauder basis have the pleasant property that they are just rescaled second order increments of $f$. So if $f$ is a stochastic process with known distribution, then also the distribution of its coefficients in the Schauder basis is known explicitly. 
A simple application is the L\'evy-Ciesielski construction of Brownian motion. An incomplete list with further applications will be given below.

Another convenient property of Schauder functions is that they are piecewise linear, and therefore their iterated integrals $\int_0^\cdot G_{pm}(s) \dd G_{qn}(s)$, can be easily calculated. This makes them an ideal tool for our purpose of studying integrals. Indeed, given two continuous functions $f$ and $g$ on $[0,1]$ with values in $\L(\R^d, \R^n)$, the space of linear maps from $\R^d$ to $\R^n$, and $\R^d$ respectively, we can formally define
\begin{align*}
   \int_0^t f(s) \dd g(s) := \sum_{p,m}\sum_{q,n} \langle H_{pm}, \dd f \rangle \langle H_{qn}, \dd g \rangle \int_0^t G_{pm} (s) \dd G_{qn}(s).
\end{align*}
In this paper we study, under which conditions this formal definition can be made rigorous. We start by observing that the integral introduces a bounded operator from $C^\alpha \times C^\beta$ to $C^\beta$ if and only if $\alpha+\beta > 1$. Obviously, here we simply recover Young's integral~\cite{Young1936}. In our study of this integral, we identify different components:
\begin{align*}
   \int_0^t f(s) \dd g(s) = S(f,g)(t) + \pi_<(f,g)(t) + L(f,g)(t),
\end{align*}
where $S$ is the \emph{symmetric part}, $\pi_<$ the \emph{paraproduct}, and $L(f,g)$ the \emph{L\'evy area}. The operators $S$ and $\pi_<$ are defined for $f \in C^\alpha$ and $g \in C^\beta$ for arbitrary $\alpha,\beta>0$, and it is only the L\'evy area which requires $\alpha + \beta > 1$. Considering the regularity of the three operators, we have $S(f,g) \in C^{\alpha + \beta}$, $\pi_<(f,g) \in C^\beta$, and $L(f,g) \in C^{\alpha+\beta}$ whenever the latter is defined. Therefore, in the Young regime $\int_0^\cdot f(s) \dd g(s) - \pi_<(f,g) \in C^{\alpha + \beta}$. We will also see that for sufficiently smooth functions $F$ we have $F(f) \in C^{\alpha}$ but $F(f) - \pi_<(\dD F(f), f) \in C^{2\alpha}$. So both $\int_0^\cdot f(s) \dd g(s)$ and $F(f)$ are given by a paraproduct plus a smoother remainder. This leads us to call a function $f \in C^\alpha$ \emph{paracontrolled} by $g$ if there exists a function $f^g \in C^\beta$ such that $f - \pi_<(f^g,g) \in C^{\alpha+\beta}$. Our aim is then to construct the L\'evy area $L(f,g)$ for $\alpha < 1/2$ and $f$ paracontrolled by $g$. If $\beta > 1/3$, then the term $L(f - \pi_<(f^g,g),g)$ is well defined, and it suffices to make sense of the term $L(\pi_<(f^g,g),g)$. This is achieved with the following commutator estimate:
\begin{align*}
   \left\lVert L(\pi_<(f^g,g),g) - \int_0^\cdot f^g(s) \dd L(g,g)(s)\right\rVert_{3\beta} \le \lVert f^g \rVert_\beta \lVert g \rVert_\beta \lVert g \rVert_\beta.
\end{align*}
Therefore, the integral $\int_0^\cdot f(s)\dd g(s)$ can be constructed for all $f$ that are paracontrolled by $g$, provided that $L(g,g)$ can be constructed. In other words, we have found an alternative formulation of Lyons'~\cite{Lyons1998} rough path integral, at least for H\"older continuous functions of H\"older exponent larger than 1/3.

Since we approximate $f$ and $g$ by functions of bounded variation, our integral is of Stratonovich type, that is it satisfies the usual integration by parts rule. We also consider a non-anticipating It\^{o} type integral, that can essentially be reduced to the Stratonovich case with the help of the quadratic variation.

The last remaining problem is then to construct the L\'evy area $L(g,g)$ for suitable stochastic processes $g$. We construct it for certain hypercontractive processes. For continuous martingales that possess sufficiently many moments we give a construction of the It\^{o} iterated integrals that allows us to use them as integrators for our pathwise It\^{o} integral.

Below we give some pointers to the literature, and we introduce some basic notations which we will use throughout. In Section~\ref{s:preliminaries ciesielski} we recall some details on Ciesielski's isomorphism, and we give a short overview on rough paths and Young integration. In Section~\ref{s:paradifferential calculus} we develop a paradifferential calculus in terms of Schauder functions, and we examine the different components of Young's integral. In Section~\ref{s:schauder rough path integral} we construct the rough path integral based on Schauder functions. Section~\ref{s:pathwise ito} develops the pathwise It\^o integral. In Section~\ref{s:construction of levy area} we construct the L\'evy area for suitable stochastic processes. And in Section~\ref{s:sde} we apply our integral to solve both It\^o type and Stratonovich type SDEs in a pathwise way.

\paragraph{Relevant literature}

Starting with the L\'evy-Ciesielski construction of Brownian motion, Schauder functions have been a very popular tool in stochastic analysis. They can be used to prove in a comparatively easy way that stochastic processes belong to Besov spaces; see for example Ciesielski, Kerkyacharian, and Roynette~\cite{Ciesielski1993}, Roynette~\cite{Roynette1993}, and Rosenbaum~\cite{Rosenbaum2009}. Baldi and Roynette~\cite{Baldi1992} have used Schauder functions to extend the large deviation principle for Brownian motion from the uniform to the H\"older topology; see also Ben Arous and Ledoux~\cite{BenArous1994} for the extension to diffusions, Eddahbi, N'zi, and Ouknine~\cite{Eddahbi1999} for the large deviation principle for diffusions in Besov spaces, and Andresen, Imkeller, and Perkowski~\cite{Andresen2013} for the large deviation principle for a Hilbert space valued Wiener process in H\"older topology. Ben Arous, Gr\u{a}dinaru, and Ledoux~\cite{BenArous1994a} use Schauder functions to extend the Stroock-Varadhan support theorem for diffusions from the uniform to the H\"older topology. Lyons and Zeitouni~\cite{Lyons1999} use Schauder functions to prove exponential moment bounds for Stratonovich iterated integrals of a Brownian motion conditioned to stay in a small ball. Gantert~\cite{Gantert1994} uses Schauder functions to associate to every sample path of the Brownian bridge a sequence of probability measures on path space, and continues to show that for almost all sample paths these measures converge to the distribution of the Brownian bridge. This shows that the law of the Brownian bridge can be reconstructed from a single ``typical sample path''.

Concerning integrals based on Schauder functions, there are three important references: Roynette~\cite{Roynette1993} constructs a version of Young's integral on Besov spaces and shows that in the one dimensional case the Stratonovich integral $\int_0^\cdot F(W_s) \dd W_s$, where $W$ is a Brownian motion, and $F \in C^2$, can be defined in a deterministic manner with the help of Schauder functions. Roynette also constructs more general Stratonovich integrals with the help of Schauder functions, but in that case only almost sure convergence is established, where the null set depends on the integrand, and the integral is not a deterministic operator. Ciesielski, Kerkyacharian, and Roynette~\cite{Ciesielski1993} slightly extend the Young integral of~\cite{Roynette1993}, and simplify the proof by developing the integrand in the Haar basis and not in the Schauder basis. They also construct pathwise solutions to SDEs driven by fractional Brownian motions with Hurst index $H>1/2$. Kamont~\cite{Kamont1994} extends the approach of~\cite{Ciesielski1993} to define a multiparameter Young integral for functions in anisotropic Besov spaces. Ogawa~\cite{Ogawa1984, Ogawa1985} investigates an integral for anticipating integrands he calls \emph{noncausal} starting from a Parseval type relation in which integrand and Brownian motion as integrator are both developed by a given complete orthonormal system in the space of square integrable functions on the underlying time interval. This concept is shown to be strongly related to  Stratonovich type integrals (see Ogawa~\cite{Ogawa1985}, Nualart, Zakai~\cite{NualartZakai1989}), and used to develop a stochastic calculus on a Brownian basis with \emph{noncausal} SDE (Ogawa~\cite{Ogawa2007}). 

Rough paths have been introduced by Lyons~\cite{Lyons1998}, see also~\cite{Lyons1995,Lyons1996,Lyons1997} for previous results. Lyons observed that solution flows to SDEs (or more generally ordinary differential equations (ODEs) driven by rough signals) can be defined in a pathwise, continuous way if paths are equipped with sufficiently many iterated integrals. More precisely, if a path has finite $p$--variation for some $p \ge 1$, then one needs to associate $\lfloor
p\rfloor$ iterated integrals to it to obtain an object which can be taken as the driving signal in an ODE, such that the solution to the ODE depends continuously on that signal. Gubinelli~\cite{Gubinelli2004, Gubinelli2010} simplified the theory of rough paths by introducing the concept of controlled paths, on which we will strongly rely in what follows. Roughly speaking, a path $f$ is controlled by the reference path $g$ if the small scale fluctuations of $f$ ``look like those of $g$''. Good monographs on rough paths are~\cite{Lyons2002, Lyons2007, Friz2010, Friz2013}.

\paragraph{Notation and conventions.}

Throughout the paper, we use the notation $a \lesssim b$ if there exists a constant $c>0$, independent of the variables under consideration, such that $a \leqslant c \cdot b$, and we write $a \simeq b$ if $a \lesssim b$ and $b \lesssim a$. If we want to emphasize the dependence of $c$ on the variable $x$, then we write $a(x) \lesssim_{x} b(x)$.

For a multi-index $\mu = ( \mu_{1} , \ldots , \mu_{d} ) \in \mathbb{N}^{d}$ we write $| \mu | = \mu_{1} + \ldots + \mu_{d}$ and $\partial^{\mu} = \partial^{| \mu |} / \partial_{x_{1}}^{\mu_{1}} \cdots \partial_{x_{d}}^{\mu_{d}}$. 
$\dD F$ or $F'$ denote the total derivative of $F$. For $k \in \mathbb{N}$ we denote by $\dD^{k} F$ the $k$-th order derivative of $F$. We also write $\partial_{x}$ for the partial derivative in direction $x$.

\section{Preliminaries}\label{s:preliminaries ciesielski}

\subsection{Ciesielski's isomorphism}\label{s:ciesielski}

Let us briefly recall Ciesielski's isomorphism between $C^\alpha([0,1],\R^d)$ and $\ell^\infty(\R^d)$. The \emph{Haar functions} $(H_{pm}, p \in \N, 1 \le m \le 2^p)$ are defined as
\begin{align*}
   H_{pm}(t) :=  \begin{cases}
                                     \sqrt{2^p}, & t \in  \left[ \frac{m-1}{2^{p}}, \frac{2m-1}{2^{p+1}}\right),\\
                                     -\sqrt{2^p}, & t \in \left[ \frac{2m-1}{2^{p+1}}, \frac{m}{2^{p}}\right), \\
                                     0, & \text{otherwise.}
                                  \end{cases}
\end{align*}
When completed by $H_{00} \equiv 1$, the Haar functions are an orthonormal basis of $L^2([0,1],\dd t)$. For convencience of notation, we also define $H_{p0}\equiv 0$ for $p \ge 1$. The primitives of the Haar functions are called \emph{Schauder functions} and they are given by $G_{pm} (t) := \int_0^t H_{pm} (s) \dd s$ for $t\in[0,1]$, $p\in \N$, $0 \le m \le 2^p$. More explicitly, $G_{00}(t) = t$ and for $p\in \N$, $1 \le m \le 2^p$
\begin{align*}
   G_{pm} (t) = \begin{cases}
                                     2^{p/2}\left(t - \frac{m-1}{2^{p}}\right), & t \in  \left[ \frac{m-1}{2^{p}}, \frac{2m-1}{2^{p+1}}\right),\\
                                     - 2^{p/2}\left(t - \frac{m}{2^{p}} \right), & t \in \left[ \frac{2m-1}{2^{p+1}}, \frac{m}{2^{p}}\right),\\
                                     0, & \text{otherwise}.
                                 \end{cases}
\end{align*}
Since every $G_{pm}$ satisfies $G_{pm}(0) = 0$, we are only able to expand functions $f$ with $f(0)=0$ in terms of this family $(G_{pm})$. Therefore, we complete $(G_{pm})$ once more, by defining $G_{-10}(t) := 1$ for all $t \in [0,1]$. To abbreviate notation, we define the times $t^i_{pm}$, $i = 0,1,2$, as
\begin{align*}
   t_{pm}^0 := \frac{m-1}{2^p}, \quad t_{pm}^1 := \frac{2m-1}{2^{p+1}}, \quad t_{pm}^2 := \frac{m}{2^p},
\end{align*}
for $p \in \N$ and $1 \le m \le 2^p$. Further, we set $t^0_{-10} := 0$, $t^1_{-10}:= 0$, $t^2_{-10}:=1$, and $t^0_{00}:=0$, $t^1_{00}:=1$, $t^2_{00}:=1$, as well as $t^i_{p0} := 0$ for $p \ge 1$ and $i = 0,1,2$. The definition of $t^i_{-10}$ and $t^i_{00}$ for $i\neq 1$ is rather arbitrary, but the definition for $i = 1$ simplifies for example the statement of Lemma~\ref{l:schauder functions give linear interpolation} below.

For $f \in C([0,1],\R^d)$, $p\in \N$, and $1 \le m \le 2^p$, we write
\begin{align*}
   \langle H_{pm}, \dd f \rangle :=\,& 2^{\frac{p}{2}}\left[ \left(f\left(t^1_{pm}\right) - f\left(t^0_{pm}\right)\right) - \left( f\left(t^2_{pm}\right) - f\left(t^1_{pm}\right)\right)\right] \\
   =\, & 2^{\frac{p}{2}}\left[ 2 f\left(t^1_{pm}\right) - f\left(t^0_{pm}\right) - f\left(t^2_{pm}\right)\right]
\end{align*}
and $\langle H_{00}, \dd f \rangle := f(1) - f(0)$ as well as $\langle H_{-10}, \dd f \rangle := f(0)$. Note that we only defined $G_{-10}$ and not $H_{-10}$.


\begin{lem}\label{l:schauder functions give linear interpolation}
   For $f\colon[0,1]\rightarrow \R^d$, the function
   \[
      f_k := \langle H_{-10}, \dd f\rangle G_{-10} + \langle H_{00}, \dd f \rangle G_{00} + \sum_{p=0}^k \sum_{m=1}^{2^p} \langle H_{pm}, \dd f \rangle G_{pm} = \sum_{p=-1}^k \sum_{m=0}^{2^p} \langle H_{pm}, \dd f \rangle G_{pm}
   \]
   is the linear interpolation of $f$ between the points $t^1_{-10}, t^1_{00}, t^1_{pm}$, $0 \le p \le k, 1 \le m \le 2^p$. If $f$ is continuous, then $(f_k)$ converges uniformly to $f$ as $k \rightarrow \infty$.
\end{lem}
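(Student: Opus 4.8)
The plan is to verify the two assertions — the identification of $f_k$ as a piecewise-linear interpolation, and the uniform convergence — in that order. For the first claim I would proceed by induction on $k$. The base case concerns $f_{-1} = \langle H_{-10},\dd f\rangle G_{-10} = f(0)$, the constant function, which is trivially the ``linear interpolation'' through the single node $t^1_{-10} = 0$; adding the $p=0$ term $\langle H_{00},\dd f\rangle G_{00}(t) = (f(1)-f(0))\,t$ gives the affine function agreeing with $f$ at $t^1_{-10}=0$ and $t^1_{00}=1$. For the inductive step, the key observation is that passing from level $p-1$ to level $p$ adds the functions $G_{pm}$, $1\le m\le 2^p$, each of which is supported on $[t^0_{pm},t^2_{pm}] = [\tfrac{m-1}{2^p},\tfrac{m}{2^p}]$, vanishes at both endpoints of that interval, and takes the value $2^{-p/2}\cdot 2^{p/2}\cdot\tfrac12 = $ (after the correct computation) exactly the amount needed at the midpoint $t^1_{pm}$. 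Concretely one checks that $\langle H_{pm},\dd f\rangle G_{pm}(t^1_{pm})$ equals $f(t^1_{pm}) - \tfrac12\big(f(t^0_{pm})+f(t^2_{pm})\big)$, i.e. the gap between $f$ at the new midpoint and the value at that point of the previous interpolant $f_{k-1}$ (which is affine on $[t^0_{pm},t^2_{pm}]$, hence equals the average of its endpoint values there). Since the new basis functions vanish outside these dyadic intervals and at their endpoints, adding them does not disturb the values at the old nodes, and it corrects the value at each new node $t^1_{pm}$ to $f(t^1_{pm})$ exactly. Because $f_k$ is a finite sum of piecewise-linear functions with breakpoints among the dyadics of level $\le k$, it is itself piecewise linear with those breakpoints, and it now agrees with $f$ at all the nodes $t^1_{pm}$, $-1\le p\le k$; hence it is the linear interpolant.

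For the uniform convergence when $f$ is continuous, I would use that $f$ is uniformly continuous on the compact interval $[0,1]$. Given $\varepsilon>0$, choose $\delta>0$ with $|f(s)-f(t)|\le\varepsilon$ whenever $|s-t|\le\delta$, and pick $k$ with $2^{-k}\le\delta$. For any $t\in[0,1]$, $f_k(t)$ is a convex combination of the values of $f$ at the two nodes of level $k$ bracketing $t$ (or at the appropriate boundary nodes), and both of those nodes lie within distance $2^{-k}\le\delta$ of $t$; since $f(t)$ is itself in the same $\delta$-ball, $|f_k(t)-f(t)|\le\varepsilon$ by the triangle inequality and convexity. This bound is uniform in $t$, giving $\|f_k-f\|_\infty\to 0$.

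The only mildly delicate point — and the one I would treat most carefully — is the bookkeeping at the two boundary/coarse nodes, i.e. making sure the conventions $G_{-10}\equiv 1$, $G_{00}(t)=t$, $t^1_{-10}=0$, $t^1_{00}=1$, and $t^i_{p0}$, $H_{p0}\equiv0$ all mesh so that ``the nodes $t^1_{pm}$'' genuinely are the right interpolation points and no node is double-counted or omitted; this is exactly why the excerpt fixed $t^1_{-10}=t^1_{00}$ the way it did. Everything else is a routine verification of the value of $G_{pm}$ at its midpoint and the support properties already written out in the definitions above. I do not expect any real obstacle beyond this indexing care.
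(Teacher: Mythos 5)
Your proof is correct and is the standard argument: the paper states Lemma~\ref{l:schauder functions give linear interpolation} without proof (it is the classical Ciesielski/L\'evy interpolation fact), and your induction on dyadic levels — based on the identity $\langle H_{pm},\dd f\rangle G_{pm}(t^1_{pm}) = f(t^1_{pm}) - \tfrac12\bigl(f(t^0_{pm})+f(t^2_{pm})\bigr)$ together with the vanishing of $G_{pm}$ at $t^0_{pm},t^2_{pm}$ and outside $[t^0_{pm},t^2_{pm}]$ — combined with uniform continuity for the convergence is exactly how one proves it. One cosmetic correction: the nodes/breakpoints of $f_k$ are the dyadics $j2^{-(k+1)}$, since the new midpoints $t^1_{km}$ sit at level $k+1$, so ``breakpoints among the dyadics of level $\le k$'' should read ``level $\le k+1$''; this does not affect the argument.
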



Ciesielski~\cite{Ciesielski1960} observed that if $f$ is H\"older-continuous, then the series $(f_k)$ converges absolutely and the speed of convergence can be estimated in terms of the H\"older norm of $f$. The norm $\lVert \cdot \rVert_{C^\alpha}$ is defined as
\[
   \lVert f \rVert_{C^\alpha} := \lVert f \rVert_\infty + \sup_{0\le s < t \le 1} \frac{|f_{s,t}|}{|t-s|^\alpha},
\]
where we introduced the notation
   \[
      f_{s,t} := f(t) - f(s).
   \]

\begin{lem}[\cite{Ciesielski1960}]\label{l:ciesielski}
   Let $\alpha \in (0,1)$. A continuous function $f: [0,1] \rightarrow \R^d$ is in $C^\alpha$ if and only if $\sup_{p,m} 2^{p(\alpha - 1/2)} |\langle H_{pm}, \dd f\rangle| < \infty$. In this case
   \begin{gather}\label{e:ciesielski isomorphism}
      \sup_{p,m} 2^{p(\alpha - 1/2)} |\langle H_{pm}, \dd f\rangle| \simeq \lVert f \rVert_\alpha \text{ and} \\ \nonumber
      \lVert f - f_{N-1} \rVert_\infty = \Big\lVert \sum_{p = N}^\infty \sum_{m=0}^{2^p} |\langle H_{pm}, \dd f\rangle|  G_{pm} \Big\rVert_\infty \lesssim \lVert f \rVert_\alpha 2^{-\alpha N}.
   \end{gather}
\end{lem}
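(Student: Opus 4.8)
\emph{Overall plan.} I would prove the two implications of the equivalence separately and extract the convergence rate \eqref{e:ciesielski isomorphism} from the quantitative estimates produced along the way. Throughout set $\lambda := \sup_{p,m} 2^{p(\alpha-1/2)}|\langle H_{pm}, \dd f\rangle|$. The easy direction ($f \in C^\alpha \Rightarrow \lambda \lesssim \lVert f\rVert_\alpha$) is immediate from the definition of the Haar coefficients: for $p \in \N$, $1 \le m \le 2^p$ we have $\langle H_{pm},\dd f\rangle = 2^{p/2}[(f(t^1_{pm}) - f(t^0_{pm})) - (f(t^2_{pm}) - f(t^1_{pm}))]$, a difference of two increments of $f$ over intervals of length $2^{-(p+1)}$, each bounded by $\lVert f\rVert_\alpha 2^{-(p+1)\alpha}$, so $|\langle H_{pm},\dd f\rangle| \lesssim 2^{p(1/2-\alpha)}\lVert f\rVert_\alpha$; and $|\langle H_{00},\dd f\rangle| = |f(1)-f(0)| \le 2\lVert f\rVert_\alpha$, $|\langle H_{-10},\dd f\rangle| = |f(0)| \le \lVert f\rVert_\alpha$.

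\emph{From the coefficients back to $C^\alpha$: the sup-norm estimate.} Assume now $\lambda < \infty$. The only properties of the Schauder functions I would use are: (i) at each level $p \ge 1$ the supports of $G_{p1},\dots,G_{p2^p}$ are pairwise disjoint intervals of length $2^{-p}$; (ii) $G_{pm} \ge 0$ with $\lVert G_{pm}\rVert_\infty = 2^{-p/2-1}$; and (iii) each $G_{pm}$ ($p\ge 1$, or $p=0$ and $m=1$) vanishes at both endpoints of its support and is Lipschitz with constant $\lVert H_{pm}\rVert_\infty = 2^{p/2}$. From (i)--(ii), for fixed $p \ge 1$,
\[
   \Bigl\lVert \sum_{m=0}^{2^p}\langle H_{pm},\dd f\rangle G_{pm}\Bigr\rVert_\infty = \max_{1\le m\le 2^p}|\langle H_{pm},\dd f\rangle|\,\lVert G_{pm}\rVert_\infty \le \lambda\, 2^{p(1/2-\alpha)}2^{-p/2-1} = \tfrac12 \lambda\, 2^{-p\alpha},
\]
while the level $p=0$ block is bounded by a constant times $\lambda$. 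Summing the geometric series gives $\lVert f - f_{N-1}\rVert_\infty \le \sum_{p\ge N}\tfrac12\lambda 2^{-p\alpha} \lesssim \lambda 2^{-\alpha N}$, which is the second line of \eqref{e:ciesielski isomorphism} as soon as we know the series sums to $f$; but the same bound shows $(f_k)$ is uniformly Cauchy, and since $f$ is continuous Lemma~\ref{l:schauder functions give linear interpolation} identifies its uniform limit as $f$. Taking $N=0$ and adding the $p=-1$ term $|f(0)| \le \lambda$ also yields $\lVert f\rVert_\infty \lesssim \lambda$.

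\emph{From the coefficients back to $C^\alpha$: the H\"older seminorm.} Fix $0 \le s < t \le 1$ and let $N \in \N$ satisfy $2^{-N-1} \le |t-s| < 2^{-N}$. In the absolutely convergent expansion $f(t)-f(s) = \sum_{p\ge-1}\sum_{m=0}^{2^p}\langle H_{pm},\dd f\rangle(G_{pm}(t)-G_{pm}(s))$, the level $p=-1$ contributes nothing ($G_{-10}\equiv 1$), the level $p=0$ contributes at most $(|\langle H_{00},\dd f\rangle| + |\langle H_{01},\dd f\rangle|)|t-s| \le 2\lambda|t-s|$, and for $p \ge 1$ property (iii) shows the summand vanishes unless $s$ or $t$ lies in the interior of the support of $G_{pm}$, so at most two values of $m$ contribute per level. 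For those levels estimate $|G_{pm}(t)-G_{pm}(s)| \le 2^{p/2}|t-s|$ when $p \le N$ and $|G_{pm}(t)-G_{pm}(s)| \le \lVert G_{pm}\rVert_\infty \le 2^{-p/2}$ when $p > N$; with $|\langle H_{pm},\dd f\rangle| \le \lambda 2^{p(1/2-\alpha)}$ this gives
\[
   |f(t)-f(s)| \le 2\lambda|t-s| + 2\lambda\Bigl(|t-s|\sum_{1\le p\le N}2^{p(1-\alpha)} + \sum_{p>N}2^{-p\alpha}\Bigr) \lesssim \lambda\bigl(|t-s|\,2^{N(1-\alpha)} + 2^{-N\alpha}\bigr) \lesssim \lambda|t-s|^\alpha,
\]
using $1-\alpha>0$ and $\alpha>0$ to sum the two geometric series by their extremal terms, together with $2^{-N}\simeq|t-s| \le 1$. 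Combined with $\lVert f\rVert_\infty \lesssim \lambda$ this gives $\lVert f\rVert_\alpha \lesssim \lambda$, which together with the first part is the equivalence in \eqref{e:ciesielski isomorphism}.

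\emph{Where the difficulty sits.} None of this is deep; the one structural point that makes everything work is (iii), namely that $G_{pm}$ vanishes at the boundary of its support, so that a fixed increment $[s,t]$ sees only $O(1)$ Schauder functions per dyadic level and the sums over scales become honest geometric series. The remainder is the standard "split at the scale $2^{-N}\simeq|t-s|$" argument, plus a little bookkeeping for the exceptional indices $p\in\{-1,0\}$ (for which property (iii) must be checked or circumvented by hand, since $G_{00}$ and $G_{-10}$ do not vanish at the endpoints of $[0,1]$).
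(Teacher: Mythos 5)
Your proof is correct: the coefficient bound from second-order increments, the level-wise sup-norm bound with disjoint supports giving the tail estimate $\lVert f-f_{N-1}\rVert_\infty\lesssim\lambda 2^{-\alpha N}$, and the scale-splitting argument at $2^{-N}\simeq|t-s|$ (using that only $O(1)$ Schauder functions per level see a given increment) together yield the stated equivalence, with the exceptional levels $p=-1,0$ handled correctly by hand. The paper itself gives no proof of this lemma --- it is quoted from Ciesielski's original work --- and your argument is exactly the standard one behind that citation.
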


Before we continue, let us slightly change notation. We want to get rid of the factor $2^{-p/2}$ in \eqref{e:ciesielski isomorphism}, and therefore we define for $p \in \N$ and $0 \le m \le 2^p$ the rescaled functions
\begin{align*}
   \chi_{pm} := 2^{\frac{p}{2}} H_{pm} \qquad \text{and} \qquad \varphi_{pm} := 2^{\frac{p}{2}} G_{pm},
\end{align*}
as well as $\varphi_{-10} := G_{-10} \equiv 1$. Then we have for $p \in \N$ and $1 \le m \le 2^p$
\begin{align*}
   \lVert\varphi_{pm}(t)\rVert_\infty = \varphi_{pm}(t^1_{pm}) = 2^{\frac{p}{2}} \int_{t^0_{pm}}^{t^1_{pm}} 2^{\frac{p}{2}} \dd s = 2^p \left( \frac{2m-1}{2^{p+1}} - \frac{2m - 2}{2^{p+1}}\right) = \frac{1}{2},
\end{align*}
so that $\lVert \varphi_{pm}\rVert_\infty \le 1$ for all $p,m$. The expansion of $f$ in terms of $(\varphi_{pm})$ is given by $f_k = \sum_{p=0}^k \sum_{m=0}^{2^p} f_{pm} \varphi_{pm}$, where $f_{-10} := f(1)$, and $f_{00} := f(1)-f(0)$ and for $p \in \N$ and $m \ge 1$
\begin{align*}
   f_{pm} := 2^{-p} \langle \chi_{pm}, \dd f \rangle =  2 f\left(t^1_{pm}\right) - f\left(t^0_{pm}\right) - f\left(t^2_{pm}\right) = f_{t^0_{pm}, t^1_{pm}} - f_{t^1_{pm}, t^2_{pm}}.
\end{align*}
We write $\langle \chi_{pm}, \dd f\rangle := 2^p f_{pm}$ for all values of $(p,m)$, despite not having defined $\chi_{-10}$.


\begin{defn}
   For $\alpha > 0$ and $f \colon [0,1] \to \R^d$ the norm $\lVert \cdot \rVert_{\alpha}$ is defined as
   \[
      \lVert f \rVert_\alpha := \sup_{pm} 2^{p\alpha} |f_{pm}|,
   \]
   and we write
   \begin{align*}
      \C^\alpha := \C^\alpha(\R^d) := \left\{f \in C( [0,1], \R^d): \lVert f \rVert_\alpha < \infty\right\}.
   \end{align*}
\end{defn}

The space $\C^\alpha$ is isomorphic to $\ell^\infty(\R^d)$, in particular it is a Banach space. For $\alpha \in (0,1)$, Ciesielski's isomorphism (Lemma~\ref{l:ciesielski}) states that $\C^\alpha = C^\alpha([0,1],\R^d)$. Moreover, it can be shown that $\C^1$ is the Zygmund space of continuous functions $f$ satisfying $|2f(x) - f(x+h) - f(x-h)| \lesssim h$. But for $\alpha > 1$, there is no reasonable identification of $\C^{\alpha}$ with a classical function space. For example if $\alpha \in (1,2)$, the space $C^{\alpha}([0,1], \R^d)$ consists of all continuously differentiable functions $f$ with $(\alpha-1)$--H\"older continuous derivative $\dD f$. Since the tent shaped functions $\varphi_{pm}$ are not continuously differentiable, even an $f$ with a finite Schauder expansion is generally not in $C^{\alpha}$.

The a priori requirement of $f$ being continuous can be relaxed, but not much. Since the coefficients $(f_{pm})$ evaluate the function $f$ only in countably many points, a general $f$ will not be uniquely determined by its expansion. But for example it would suffice to assume that $f$ is c\`adl\`ag.


\paragraph{Littlewood-Paley notation.}

We will employ notation inspired from Littlewood-Paley theory. For $p \ge -1$ and $f \in C([0,1])$ we define
\begin{align*}
   \Delta_p f := \sum_{m=0}^{2^p} f_{pm} \varphi_{pm} \qquad \text{and} \qquad S_p f := \sum_{q \le p} \Delta_q f.
\end{align*}
We will occasionally refer to $(\Delta_p f)$ as the Schauder blocks of $f$. Note that
\[
   \C^\alpha = \{f \in C([0,1],\R^d): \lVert (2^{p\alpha} \lVert \Delta_p f \rVert_\infty)_p \rVert_{\ell^\infty} < \infty\}.
\]

\subsection{Young integration and rough paths} \label{s:rough paths}

Here we present the main concepts of Young integration and of rough path theory. The results presented in this section will not be applied in the remainder of this chapter, but we feel that it could be useful for the reader to be familiar with the basic concepts of rough paths, since it is the main inspiration for the constructions developed below.

Young's integral~\cite{Young1936} allows to define $\int f \dd g$ for $f \in C^\alpha$, $g \in C^\beta$, and $\alpha + \beta > 1$. More precisely, let $f \in C^\alpha$ and $g \in C^\beta$ be given, let $t \in [0,1]$, and let $\pi = \{t_0, \dots, t_N\}$ be a partition of $[0,t]$, i.e. $0=t_0 < t_1 < \dots < t_N=t$. Then it can be shown that the Riemann sums
\begin{align*}
   \sum_{t_k \in \pi} f(t_k) (g(t_{k+1})-g(t_k)) := \sum_{k=0}^{N-1} f(t_k) (g(t_{k+1})-g(t_k))
\end{align*}
converge as the mesh size $\max_{k=0,\dots, N-1} |t_{k+1}-t_k|$ tends to zero, and that the limit does not depend on the approximating sequence of partitions. We denote the limit by $\int_0^t f(s) \dd g(s)$, and we define $\int_s^t f(r) \dd g(r) := \int_0^t f(r) \dd g(r) - \int_0^s f(r) \dd g(r)$. The function $t \mapsto \int_0^t f(s) \dd g(s)$ is uniquely characterized by the fact that
\begin{align*}
   \left| \int_s^t f(r) \dd g(r) - f(s) (g(t)-g(s)) \right| \lesssim |t-s|^{\alpha + \beta} \lVert f \rVert_\alpha \lVert g \rVert_\beta
\end{align*}
for all $s,t \in [0,1]$. The condition $\alpha + \beta > 1$ is sharp, in the sense that there exist $f, g \in C^{1/2}$, and a sequence of partitions $(\pi_n)_{n \in \N}$ with mesh size going to zero, for which the Riemann sums $\sum_{t_k \in \pi_n} f(t_k) (g(t_{k+1})-g(t_k))$ do not converge as $n$ tends to $\infty$.

The condition $\alpha + \beta > 1$ excludes one of the most important examples: we would like to take $g$ as a sample path of Brownian motion, and $f = F(g)$. Lyons' theory of rough paths~\cite{Lyons1998} overcomes this restriction by stipulating the ``existence'' of basic integrals and by defining a large class of related integrals as their functionals. Here we present the approach of Gubinelli~\cite{Gubinelli2004}.

Let $\alpha \in (1/3,1)$ and assume that we are given two functions $v,w \in C^\alpha$, as well as an associated ``Riemann integral'' $I^{v,w}_{s,t} = \int_s^t v(r) \dd w(r)$ that satisfies the estimate
\begin{align}\label{e:area estimate}
   |\Phi^{v,w}_{s,t}|:=|I^{v,w}_{s,t} - v(s) w_{s,t}| \lesssim |t-s|^{2\alpha}.
\end{align}
The remainder $\Phi^{v,w}$ is often (incorrectly) called the \emph{area} of $v$ and $w$. This name has its origin in the fact that its antisymmetric part $1/2(\Phi^{v,w}_{s,t} - \Phi^{w,v}_{s,t})$ corresponds to the algebraic area spanned by the curve $((v(r), w(r)): r \in [s,t])$ in the plane $\R^2$.

If $\alpha \le 1/2$, then the integral $I^{v,w}$ cannot be constructed using Young's theory of integration, and also $I^{v,w}$ is not uniquely characterized by \eqref{e:area estimate}. But let us assume nonetheless that we are given such an integral $I^{v,w}$ satisfying \eqref{e:area estimate}. A function $f \in C^\alpha$ is \emph{controlled} by $v \in C^\alpha$ if there exists $f^v \in C^\alpha$, such that for all $s,t \in [0,1]$
\begin{align}\label{e:controlled}
   |f_{s,t} - f^v_s v_{s,t}| \lesssim |t-s|^{2\alpha}.
\end{align}

\begin{prop}[\cite{Gubinelli2004}, Theorem 1]\label{p:Gubinelli rough paths}
   Let $\alpha > 1/3$, let $v,w \in C^\alpha$, and let $I^{v,w}$ satisfy \eqref{e:area estimate}. Let $f$ and $g$ be controlled by $v$ and $w$ respectively, with derivatives $f^v$ and $g^w$. Then there exists a unique function $I(f,g) = \int_0^\cdot f(s) \dd g(s)$ that satisfies for all $s,t \in [0,1]$
   \begin{align*}
      |I(f,g)_{s,t} - f(s) g_{s,t} - f^v(s) g^w(s) \Phi^{v,w}_{s,t}| \lesssim |t-s|^{3\alpha}.
   \end{align*}
   If $(\pi_n)$ is a sequence of partitions of $[0,t]$, with mesh size going to zero, then
   \begin{align*}
      I(f,g)(t) = \lim_{n \rightarrow \infty} \sum_{t_k \in \pi_n} \left( f(t_k) g_{t_k, t_{k+1}} + f^v_{t_k} g^w_{t_k} \Phi^{v,w}_{t_k, t_{k+1}}\right).
   \end{align*}
\end{prop}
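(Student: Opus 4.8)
The plan is to establish existence by constructing $I(f,g)$ as a limit of compensated Riemann sums, and uniqueness by a sewing-type argument. For uniqueness, suppose $J^1, J^2$ both satisfy the defining estimate with exponent $3\alpha > 1$; then the increments $R_{s,t} := J^1_{s,t} - J^2_{s,t}$ satisfy $|R_{s,t}| \lesssim |t-s|^{3\alpha}$, and since $R$ is additive ($R_{s,t} = R_{s,u} + R_{u,t}$) one gets $|R_{0,t}| = |\sum_k R_{t_k,t_{k+1}}| \lesssim \sum_k |t_{k+1}-t_k|^{3\alpha} \to 0$ along any sequence of partitions with vanishing mesh, so $R \equiv 0$. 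The same additivity-plus-Hölder-control argument shows that if a function with the stated property exists, it must be the stated limit of the compensated sums, which also handles the last assertion of the proposition.

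For existence, I would define the ``candidate local increment'' $\mu_{s,t} := f(s) g_{s,t} + f^v(s) g^w(s) \Phi^{v,w}_{s,t}$ and show that the Riemann--Stieltjes-type sums $\sum_{t_k \in \pi} \mu_{t_k, t_{k+1}}$ converge as the mesh tends to zero, to a limit independent of the partition sequence. The standard route is to check that $\mu$ is ``almost additive'': estimate the defect $\delta\mu_{s,u,t} := \mu_{s,t} - \mu_{s,u} - \mu_{u,t}$ and show $|\delta\mu_{s,u,t}| \lesssim |t-s|^{3\alpha}$ with $3\alpha > 1$. Expanding, $\delta\mu_{s,u,t} = -f_{s,u} g_{u,t} - (f^v(u) g^w(u) - f^v(s) g^w(s))\Phi^{v,w}_{u,t} + f^v(s) g^w(s)(\Phi^{v,w}_{s,t} - \Phi^{v,w}_{s,u} - \Phi^{v,w}_{u,t})$, using that $\Phi^{v,w}_{s,t} - \Phi^{v,w}_{s,u} - \Phi^{v,w}_{u,t} = -v_{s,u} w_{u,t}$ from the definition $\Phi^{v,w}_{s,t} = I^{v,w}_{s,t} - v(s) w_{s,t}$ together with additivity of $I^{v,w}$. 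Now $|f_{s,u} g_{u,t}|$: here $f_{s,u} = f^v(s) v_{s,u} + O(|u-s|^{2\alpha})$ by \eqref{e:controlled}, and $g_{u,t}$ is merely $C^\alpha$, so the ``bad'' term $f^v(s) v_{s,u} \cdot g_{u,t}$ must cancel against the last piece $-f^v(s) g^w(s) v_{s,u} w_{u,t}$; writing $g_{u,t} = g^w(u) w_{u,t} + O(|t-u|^{2\alpha})$ and $g^w(u) = g^w(s) + O(|u-s|^\alpha)$ produces exactly this cancellation up to terms of order $|t-s|^{3\alpha}$. The middle term is handled directly: $f^v g^w$ is $C^\alpha$ (product of bounded $C^\alpha$ functions), so $f^v(u)g^w(u) - f^v(s)g^w(s) = O(|u-s|^\alpha)$ and $|\Phi^{v,w}_{u,t}| \lesssim |t-u|^{2\alpha}$ give $O(|t-s|^{3\alpha})$. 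Collecting, $|\delta\mu_{s,u,t}| \lesssim |t-s|^{3\alpha}(\lVert f\rVert_\alpha + \dots)$.

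With the almost-additivity bound in hand, the convergence of the sums and the estimate $|I(f,g)_{s,t} - \mu_{s,t}| \lesssim |t-s|^{3\alpha}$ follow from the sewing lemma (equivalently, the argument in \cite{Gubinelli2004} showing that an almost-additive functional with exponent $>1$ has a unique additive companion at distance $O(|t-s|^{3\alpha})$): one passes to dyadic partitions, uses a telescoping/doubling estimate $|\mu_{0,t} - \sum_{\text{dyadic level } n} \mu| \lesssim \sum_n 2^n 2^{-3\alpha n}$ which converges, and checks the limit is partition-independent by comparing any two partitions through a common refinement. The main obstacle is purely the bookkeeping in the $\delta\mu$ expansion: one must insert the controlled-path expansions for both $f$ and $g$ at the right base points and track that every term which is not already $O(|t-s|^{3\alpha})$ cancels, which requires care about which increment is expanded at $s$ versus $u$. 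Everything else is a routine application of the sewing machinery already implicit in Proposition~\ref{p:Gubinelli rough paths}'s source. Regularity of the auxiliary products ($f^v g^w \in C^\alpha$, $\lVert f^v g^w\rVert_\alpha \lesssim \lVert f^v\rVert_\alpha \lVert g^w\rVert_\alpha$) is the only analytic input and is immediate from the algebra property of $C^\alpha$ on a bounded interval.
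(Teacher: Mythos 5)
Your proposal follows exactly the route of the cited source: the paper itself gives no proof of this proposition (it is quoted from \cite{Gubinelli2004}, Theorem~1), and what you write is the standard sewing-lemma argument used there — the germ $\mu_{s,t} = f(s)g_{s,t} + f^v(s)g^w(s)\Phi^{v,w}_{s,t}$, the bound $|\delta\mu_{s,u,t}| \lesssim |t-s|^{3\alpha}$ with $3\alpha>1$, existence and the Riemann-sum representation from the sewing construction, and uniqueness from the fact that the difference of two candidates is a path with increments $O(|t-s|^{3\alpha})$, hence constant. In substance this is correct and complete. One algebraic correction: from $\Phi^{v,w}_{s,t} = I^{v,w}_{s,t} - v(s)w_{s,t}$ and additivity of $I^{v,w}$ one gets $\Phi^{v,w}_{s,t} - \Phi^{v,w}_{s,u} - \Phi^{v,w}_{u,t} = +\,v_{s,u}w_{u,t}$, not $-\,v_{s,u}w_{u,t}$; with your sign the leading contribution $-f^v(s)g^w(s)v_{s,u}w_{u,t}$ extracted from $-f_{s,u}g_{u,t}$ would add to, rather than cancel against, the Chen-defect term, whereas with the correct sign the cancellation goes through exactly as you describe, leaving only terms of order $|t-s|^{3\alpha}$ (also note that uniqueness is of course up to the normalization $I(f,g)(0)=0$ implicit in the statement).
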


The integral $I(f,g)$ coincides with the Riemann-Stieltjes integral and with the Young integral, whenever these are defined. Moreover, the integral map is self-consistent, in the sense that if we consider $v$ and $w$ as paracontrolled by themselves, with derivatives $v^v = w^w \equiv 1$, then $I(v,w) = I^{v,w}$.

The only remaining problem is the construction of the integral $I^{v,w}$. This is usually achieved with probabilistic arguments. If $v$ and $w$ are Brownian motions, then we can for example use It\^{o} or Stratonovich integration to define $I^{v,w}$. Already in this simple example we see that the integral $I^{v,w}$ is not unique if $v$ and $w$ are outside of the Young regime.

It is possible to go beyond $\alpha > 1/3$ by stipulating the existence of higher order iterated integrals. For details see~\cite{Gubinelli2010} or any book on rough paths, such as~\cite{Lyons2002,Lyons2007,Friz2010,Friz2013}.


\section{Paradifferential calculus and Young integration}\label{s:paradifferential calculus}

In this section we develop the basic tools that will be required for our rough path integral in terms of Schauder functions, and we study Young's integral and its different components.

\subsection{Paradifferential calculus with Schauder functions}

Here we introduce a ``paradifferential calculus'' in terms of Schauder functions. Paradifferential calculus is usually formulated in terms of Littlewood-Paley blocks and was initiated by Bony~\cite{Bony1981}. For a gentle introduction see~\cite{Bahouri2011}.

We will need to study the regularity of $\sum_{p,m} u_{pm} \varphi_{pm}$, where $u_{pm}$ are functions and not constant coefficients. For this purpose we define the following space of sequences of functions.

\begin{defn}
   If $(u_{pm}: p \ge -1, 0\le m\le2^p)$ is a family of affine functions of the form $u_{pm}: [t^0_{pm}, t^2_{pm}] \rightarrow \R^d$, 
   we set for $\alpha > 0$
   \begin{align*}
      \lVert (u_{pm})\rVert_{\A^\alpha} := \sup_{p,m} 2^{p\alpha} \lVert u_{pm}\rVert_\infty,
   \end{align*}
   where it is understood that $\lVert u_{pm} \rVert_\infty := \max_{t \in [t^0_{pm}, t^2_{pm}]} |u_{pm}(t)|$. The space $\A^\alpha := \A^\alpha(\R^d)$ is then defined as
   \[
      \A^\alpha := \left\{(u_{pm})_{p \ge -1, 0\le m\le2^p}: u_{pm}\in C([t^0_{pm}, t^2_{pm}], \R^d) \text{ is affine and } \lVert (u_{pm})\rVert_{\A^\alpha}<\infty \right\}.
   \]
\end{defn}

In Appendix~\ref{a:schauder with affine coefficients} we prove the following regularity estimate:

\begin{lem}\label{l:upm hoelder}
   Let $\alpha \in (0,2)$ and let $(u_{pm})\in \A^\alpha$. Then $\sum_{p,m} u_{pm} \varphi_{pm} \in \C^\alpha$, and
   \begin{align*}
      \Bigl\lVert \sum_{p,m} u_{pm} \varphi_{pm}\Bigr\rVert_\alpha \lesssim \lVert (u_{pm}) \rVert_{\A^\alpha}.
   \end{align*}
\end{lem}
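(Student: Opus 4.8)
The plan is to bound the $\C^\alpha$-norm of $h := \sum_{p,m} u_{pm}\varphi_{pm}$ by estimating its Schauder coefficients $h_{qn}$, using the characterization $\lVert h \rVert_\alpha = \sup_{q,n} 2^{q\alpha}|h_{qn}|$. Recall that $h_{qn} = 2h(t^1_{qn}) - h(t^0_{qn}) - h(t^2_{qn})$, a rescaled second-order increment over the dyadic interval $[t^0_{qn}, t^2_{qn}]$, which has length $2^{-q}$. So I need to understand the second-order increments of $h$ at scale $2^{-q}$, and show they are $O(2^{-q\alpha} \lVert (u_{pm})\rVert_{\A^\alpha})$.

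First I would split the sum over $p$ into low-frequency ($p < q$) and high-frequency ($p \ge q$) contributions. For the high-frequency part $\sum_{p \ge q}\sum_m u_{pm}\varphi_{pm}$, I use the crude bound: each second-order increment of $\varphi_{pm}$ is at most $2\lVert\varphi_{pm}\rVert_\infty \le 1$ in absolute value, only finitely many $m$ (in fact at most two, since the supports of $\varphi_{pm}$ for fixed $p$ have disjoint interiors and each $t^i_{qn}$ lies in at most a bounded number of them — actually at most two intervals meet $[t^0_{qn},t^2_{qn}]$ when $p \ge q$... more carefully, for $p \ge q$ the interval $[t^0_{qn},t^2_{qn}]$ contains $2^{p-q+1}$ full supports, but the point is $\varphi_{pm}$ is evaluated at only three points $t^0,t^1,t^2_{qn}$). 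Since $\lVert u_{pm}\rVert_\infty \le 2^{-p\alpha}\lVert(u_{pm})\rVert_{\A^\alpha}$, summing $\sum_{p \ge q} 2^{-p\alpha}$ gives a geometric series bounded by $2^{-q\alpha}$ times a constant (using $\alpha > 0$). The constant absorbs the bounded number of nonzero terms per scale.

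For the low-frequency part $\sum_{p < q}\sum_m u_{pm}\varphi_{pm}$, I exploit that each $\varphi_{pm}$ with $p < q$ is \emph{affine} on the interval $[t^0_{qn}, t^2_{qn}]$ (because that interval, of length $2^{-q}$, is strictly contained in one of the two linear pieces of $\varphi_{pm}$, since the kink of $\varphi_{pm}$ sits at a dyadic point $t^1_{pm}$ of coarser scale) — and $u_{pm}$ is affine by assumption. Hence $u_{pm}\varphi_{pm}$ restricted there is a product of two affine functions, i.e. a quadratic polynomial. The second-order increment of a quadratic $at^2+bt+c$ over an interval of length $\ell$ centered appropriately equals $-2a(\ell/2)^2 = -a\ell^2/2$, so it is controlled by (leading coefficient)$\,\times\,\ell^2$. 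The leading coefficient of $u_{pm}\varphi_{pm}$ is (slope of $u_{pm}$)$\,\times\,$(slope of $\varphi_{pm}$); the slope of $\varphi_{pm}$ is $\pm 2^{p/2}\cdot 2^{p/2} = \pm 2^p$, and the slope of $u_{pm}$ is bounded by $\lVert u_{pm}\rVert_\infty / (\text{length of its domain}) \lesssim 2^{-p\alpha}\lVert(u_{pm})\rVert_{\A^\alpha} \cdot 2^p$. Thus each term contributes $\lesssim 2^{-p\alpha} 2^{2p} 2^{-2q}\lVert(u_{pm})\rVert_{\A^\alpha}$; summing over $p < q$ (again only boundedly many $m$ per $p$ are relevant) gives a geometric series $\sum_{p<q} 2^{p(2-\alpha)} \lesssim 2^{q(2-\alpha)}$, using $\alpha < 2$, and multiplying by $2^{-2q}$ yields $2^{-q\alpha}\lVert(u_{pm})\rVert_{\A^\alpha}$, as desired. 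Combining the two parts and taking the supremum over $q,n$ gives the claim; a separate trivial check handles the $\lVert h\rVert_\infty$-type bottom coefficients ($q = -1, 0$).

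The main obstacle is the bookkeeping in the low-frequency estimate: one must verify carefully that for $p < q$ the function $\varphi_{pm}$ really is affine on every dyadic interval of scale $q$, which requires checking that the breakpoint $t^1_{pm}$ (and the endpoints $t^0_{pm}, t^2_{pm}$) are dyadic rationals with denominator $2^{p+1} \mid 2^q$, so they align with the scale-$q$ grid and never fall in the interior of a scale-$q$ interval — and similarly that for each $(q,n)$ only a uniformly bounded number of pairs $(p,m)$ contribute at each level $p$. These are elementary facts about the dyadic structure, so I expect the bulk of the work to be organizing these geometric observations cleanly rather than any genuine analytic difficulty; the Hölder exponent constraint $\alpha \in (0,2)$ enters exactly through the convergence of the two geometric series.
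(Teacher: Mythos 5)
Your proposal is correct and takes essentially the same route as the paper's proof in Appendix~A: estimate the Schauder coefficients $h_{qn}$, exploit that for $p<q$ both $u_{pm}$ and $\varphi_{pm}$ are affine on $[t^0_{qn},t^2_{qn}]$ so the second-order increment of their quadratic product is controlled by the product of the slopes ($\lesssim 2^{p(2-\alpha)}$) times $2^{-2q}$, and sum the geometric series using $0<\alpha<2$. The only cosmetic difference is that the paper observes the terms with $p>q$ vanish identically at the evaluation points $t^i_{qn}$ (and computes the $p=q$ term exactly as $|u_{qn}(t^1_{qn})|$), whereas you bound all $p\ge q$ crudely by a geometric series in $2^{-p\alpha}$ using $\alpha>0$ — both yield the same estimate.
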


Let us introduce a paraproduct in terms of Schauder functions.

\begin{lem}\label{l:paraproduct definition}
   Let $\beta \in (0,2)$, let $v \in C([0,1], \L(\R^d,\R^n))$, and $w \in \C^\beta(\R^d)$. Then
   \begin{align}\label{e:paraproduct definition}
      \pi_<(v,w) := \sum_{p=0}^\infty S_{p-1} v \Delta_p w \in \C^\beta(\R^n) \hspace{10pt} \text{and} \hspace{10pt} \lVert \pi_<(v,w) \rVert_\beta \lesssim \lVert v \rVert_\infty \lVert w \rVert_\beta.
   \end{align}
\end{lem}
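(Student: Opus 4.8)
The plan is to verify that $\pi_<(v,w) = \sum_{p\geq 0} S_{p-1}v\,\Delta_p w$ lands in $\C^\beta$ by checking the hypothesis of Lemma~\ref{l:upm hoelder}, i.e.\ by writing $\pi_<(v,w)$ in the form $\sum_{p,m} u_{pm}\varphi_{pm}$ with $(u_{pm}) \in \A^\beta$ and $\lVert (u_{pm})\rVert_{\A^\beta} \lesssim \lVert v\rVert_\infty \lVert w\rVert_\beta$. The natural candidate is $u_{pm} := (S_{p-1}v)\, w_{pm}$ on the interval $[t^0_{pm}, t^2_{pm}]$, where $w_{pm}$ is the scalar Schauder coefficient of $w$. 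First I would check that this identification is legitimate: on the support of $\varphi_{pm}$ the block $\Delta_p w$ equals $w_{pm}\varphi_{pm}$ (the supports of $\varphi_{pm}$ and $\varphi_{pm'}$ for $m \neq m'$ at the same level $p$ are essentially disjoint), so $S_{p-1}v\,\Delta_p w = \sum_m (S_{p-1}v\, w_{pm})\varphi_{pm}$ on each dyadic piece, and summing over $p$ gives $\pi_<(v,w) = \sum_{p,m} u_{pm}\varphi_{pm}$.

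Next I would verify the two defining properties of $\A^\beta$. Each $u_{pm}$ must be affine on $[t^0_{pm},t^2_{pm}]$: since $S_{p-1}v = \sum_{q\leq p-1}\Delta_q v$ is a finite sum of functions that are piecewise linear with breakpoints only at dyadic times $t^1_{qn}$ for $q \leq p-1$, and all such breakpoints are coarser than level $p$, the function $S_{p-1}v$ is in fact affine on the whole interval $[t^0_{pm}, t^2_{pm}]$; multiplying the affine function $S_{p-1}v$ by the constant $w_{pm}$ keeps it affine. For the norm bound, I would estimate
\[
   \lVert u_{pm}\rVert_\infty \leq \lVert S_{p-1}v\rVert_\infty\, |w_{pm}| \leq \lVert S_{p-1}v\rVert_\infty\, 2^{-p\beta}\lVert w\rVert_\beta,
\]
using the definition $\lVert w\rVert_\beta = \sup_{p,m} 2^{p\beta}|w_{pm}|$. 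It then remains to control $\lVert S_{p-1}v\rVert_\infty$ uniformly in $p$. Since Lemma~\ref{l:schauder functions give linear interpolation} identifies $S_{p-1}v = v_{p-1}$ as the piecewise-linear interpolation of $v$ through the points $t^1_{qn}$, $q \leq p-1$, and interpolation through values of $v$ cannot exceed the sup of those values, we get $\lVert S_{p-1}v\rVert_\infty \leq \lVert v\rVert_\infty$ for every $p$. Hence $2^{p\beta}\lVert u_{pm}\rVert_\infty \lesssim \lVert v\rVert_\infty\lVert w\rVert_\beta$, so $(u_{pm}) \in \A^\beta$ with the claimed norm bound.

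Finally, I would invoke Lemma~\ref{l:upm hoelder} (applicable since $\beta \in (0,2)$) to conclude $\pi_<(v,w) = \sum_{p,m} u_{pm}\varphi_{pm} \in \C^\beta(\R^n)$ together with $\lVert \pi_<(v,w)\rVert_\beta \lesssim \lVert (u_{pm})\rVert_{\A^\beta} \lesssim \lVert v\rVert_\infty\lVert w\rVert_\beta$. The step I expect to require the most care is the claim that $S_{p-1}v$ is genuinely affine (not merely piecewise affine) on each interval $[t^0_{pm},t^2_{pm}]$ — this is where the precise indexing of Schauder functions and the ``$p-1$'' truncation in the paraproduct matter, and it is exactly what makes the paraproduct fit the framework of $\A^\beta$; the sup-norm bound $\lVert S_{p-1}v\rVert_\infty \leq \lVert v\rVert_\infty$ via the interpolation interpretation is the other point worth stating explicitly, though it is routine.
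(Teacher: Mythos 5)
Your proposal is correct and follows essentially the same route as the paper: write $\pi_<(v,w)=\sum_{p,m}u_{pm}\varphi_{pm}$ with $u_{pm}=(S_{p-1}v)|_{[t^0_{pm},t^2_{pm}]}w_{pm}$, note that $S_{p-1}v$ restricted to $[t^0_{pm},t^2_{pm}]$ is the affine interpolation of $v$ there (so $\lVert u_{pm}\rVert_\infty\le 2^{-p\beta}\lVert v\rVert_\infty\lVert w\rVert_\beta$), and conclude via Lemma~\ref{l:upm hoelder}. The extra details you supply (disjoint supports at a fixed level, location of the breakpoints of $S_{p-1}v$, the interpolation sup-norm bound) are exactly the points the paper leaves implicit.
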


\begin{proof}
   We have $\pi_<(v,w) = \sum_{p,m} u_{pm} \varphi_{pm}$ with $u_{pm} = (S_{p-1} v)|_{[t^0_{pm},t^2_{pm}]} w_{pm}$. For every $(p,m)$, the function $(S_{p-1} v)|_{[t^0_{pm},t^2_{pm}]}$ is the linear interpolation of $v$ between $t^0_{pm}$ and $t^2_{pm}$. As $\lVert (S_{p-1} v)|_{[t^0_{pm},t^2_{pm}]} w_{pm} \rVert_\infty \le  2^{-p\beta}\lVert v \rVert_\infty \lVert w \rVert_\beta$, the statement follows from Lemma~\ref{l:upm hoelder}.
\end{proof}

\begin{rmk}
   If $v \in \C^\alpha(\R)$ and $w \in \C^\beta(\R)$, we can decompose the product $vw$ into three components, $vw = \pi_<(v,w) + \pi_>(v,w) + \pi_\circ(v,w)$, where $\pi_>(v,w) :=  \pi_>(w,v)$ and $\pi_\circ(v,w):= \sum_p \Delta_p v \Delta_p w$, and we have the estimates
   \begin{align*}
      \lVert \pi_>(v,w) \rVert_\alpha \lesssim \lVert v \rVert_\alpha \lVert w \rVert_\infty, \qquad \text{and}\qquad \lVert \pi_\circ(v,w) \rVert_{\alpha+\beta} \lesssim \lVert v \rVert_\alpha \lVert w \rVert_\beta
   \end{align*}
   whenever $\alpha+\beta \in (0,2)$. However, we will not use this.
\end{rmk}

The paraproduct allows us to ``paralinearize'' nonlinear functions. We allow for a smoother perturbation, which will come in handy when constructing global in time solutions to SDEs.

\begin{prop}\label{p:paralinearization}
   Let $\alpha \in (0,1/2)$, $\beta \in (0,\alpha]$, let $v \in \C^\alpha(\R^d)$, $w \in \C^{\alpha+\beta}$, and $F \in C^{1+\beta/\alpha}_b(\R^d,\R)$. 
   Then
   \begin{equation}\label{e:paralinearization estimate}
      \lVert F(v+w) - \pi_<(\dD F(v+w),v) \rVert_{\alpha + \beta} \lesssim \lVert F \rVert_{C^{1+\beta/\alpha}_b} (1 + \lVert v \rVert_\alpha)^{1+\beta/\alpha} (1 + \lVert w \rVert_{\alpha+\beta}).
   \end{equation}
   If $F \in C^{2+\beta/\alpha}_b$, then $F(v) - \pi_<(\dD F(v),v)$ depends on $v$ in a locally Lipschitz continuous way:
   \begin{align}\label{e:paralinearization lipschitz} \nonumber
      &\lVert F(v) - \pi_<(\dD F(v),v) - (F(u) - \pi_<(\dD F(u),u)) \rVert_{\alpha + \beta} \\
      &\hspace{160pt} \lesssim \lVert F \rVert_{C^{2+\beta/\alpha}_b} (1 + \lVert v \rVert_\alpha +  \lVert u \rVert_\alpha)^{1+\beta/\alpha} \lVert v - u\rVert_{\alpha}.
   \end{align}

\end{prop}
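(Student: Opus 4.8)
The plan is to reduce everything to a single scale-by-scale estimate on the Schauder blocks, using Lemma~\ref{l:upm hoelder} to pass from pointwise bounds on the (affine) coefficient functions to the $\C^{\alpha+\beta}$-norm. Write $h := v+w$. Since $\Delta_p\big(F(h) - \pi_<(\dD F(h),v)\big) = \Delta_p F(h) - (S_{p-1}\dD F(h))\,\Delta_p v$ for $p\ge 0$ (and the $p=-1$ term is handled trivially), the task is to bound $\lVert \Delta_p F(h) - (S_{p-1}\dD F(h))\Delta_p v\rVert_\infty$ by a constant times $2^{-p(\alpha+\beta)}$. Recall that $\Delta_p F(h)$ is, on each dyadic cell $[t^0_{pm},t^2_{pm}]$, the affine function with second difference $F(h)_{pm} = F(h)_{t^0_{pm},t^1_{pm}} - F(h)_{t^1_{pm},t^2_{pm}}$, i.e. essentially $2\,F(h(t^1_{pm})) - F(h(t^0_{pm})) - F(h(t^2_{pm}))$ rescaled. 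So on cell $(p,m)$ I need to compare this second difference of $F(h)$ against $\dD F(h(t))$ evaluated at an appropriate interpolation point times the second difference $v_{pm}$ of $v$.

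The key step is a Taylor expansion. On the cell $[t^0_{pm},t^2_{pm}]$, whose length is $2^{-p}$, write $F(h(t^j_{pm})) = F(h(t^1_{pm})) + \dD F(h(t^1_{pm}))\,h_{t^1_{pm},t^j_{pm}} + R^j$ for $j=0,2$, where the remainder $R^j$ is controlled using the $(\beta/\alpha)$-Hölder continuity of $\dD F$ (when $\beta<\alpha$, so $1+\beta/\alpha\in(1,2)$) by $|R^j|\lesssim \lVert F\rVert_{C^{1+\beta/\alpha}_b}\,|h_{t^1_{pm},t^j_{pm}}|^{1+\beta/\alpha}$. Since $h=v+w$ with $v\in\C^\alpha$ and $w\in\C^{\alpha+\beta}\subset\C^\alpha$, increments of $h$ over a dyadic interval of generation $p$ are $\lesssim (\lVert v\rVert_\alpha+\lVert w\rVert_{\alpha+\beta})2^{-p\alpha}$ — here I would use Ciesielski's bound \eqref{e:ciesielski isomorphism} / Lemma~\ref{l:ciesielski} to turn the $\lVert\cdot\rVert_\alpha$-norm into a genuine modulus of continuity, modulo the usual $\log$-free telescoping of Schauder coefficients (the tail estimate in Lemma~\ref{l:ciesielski} already does this). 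Hence $|R^0|+|R^2|\lesssim \lVert F\rVert_{C^{1+\beta/\alpha}_b}(1+\lVert v\rVert_\alpha)^{1+\beta/\alpha}(1+\lVert w\rVert_{\alpha+\beta})\,2^{-p\alpha(1+\beta/\alpha)} = (\cdots)\,2^{-p(\alpha+\beta)}$, which is exactly the target rate. The leading term $2F(h(t^1))-F(h(t^0))-F(h(t^2))$ minus remainders collapses to $-\dD F(h(t^1_{pm}))\,[h_{t^0_{pm},t^1_{pm}} - h_{t^1_{pm},t^2_{pm}}] = -\dD F(h(t^1_{pm}))\,h_{pm}$; splitting $h_{pm} = v_{pm} + w_{pm}$, the $w_{pm}$ part contributes $\lesssim \lVert w\rVert_{\alpha+\beta}2^{-p(\alpha+\beta)}$ directly, and it remains to compare $\dD F(h(t^1_{pm}))\,v_{pm}$ with $(S_{p-1}\dD F(h))|_{\text{cell}}\,v_{pm}$. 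But $(S_{p-1}\dD F(h))$ restricted to $[t^0_{pm},t^2_{pm}]$ is the linear interpolation of $\dD F(h)$ between $t^0_{pm}$ and $t^2_{pm}$, so it differs from $\dD F(h(t^1_{pm}))$ by at most the oscillation of $\dD F\circ h$ over that interval, which is $\lesssim \lVert F\rVert_{C^{1+\beta/\alpha}_b}(|h$-increment$|)^{\beta/\alpha}\lesssim \lVert F\rVert_{C^{1+\beta/\alpha}_b}(1+\lVert v\rVert_\alpha+\lVert w\rVert_{\alpha+\beta})^{\beta/\alpha}2^{-p\beta}$; multiplying by $|v_{pm}|\lesssim \lVert v\rVert_\alpha 2^{-p\alpha}$ gives once more the rate $2^{-p(\alpha+\beta)}$. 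Collecting the three contributions and invoking Lemma~\ref{l:upm hoelder} yields \eqref{e:paralinearization estimate}. The endpoint case $\beta=\alpha$ (where $1+\beta/\alpha=2$, so $F\in C^2_b$ and one uses a genuine second-order Taylor remainder) is handled the same way, only the Hölder-remainder step becomes a $\dD^2 F$ bound.

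For the Lipschitz estimate \eqref{e:paralinearization lipschitz}, I would run the same block-by-block analysis on the difference, now with $w=0$, comparing the cell-$(p,m)$ coefficient of $\Phi(v):=F(v)-\pi_<(\dD F(v),v)$ with that of $\Phi(u)$. The natural device is to interpolate: set $v_\theta = u+\theta(v-u)$ and write the difference as $\int_0^1 \frac{d}{d\theta}[\text{coefficient of }\Phi(v_\theta)]\,\dd\theta$, or more elementarily expand each second difference by Taylor to second order and match terms; the gain of one extra derivative ($F\in C^{2+\beta/\alpha}_b$) is precisely what lets every error term carry a factor $\lVert v-u\rVert_\alpha$ while still decaying like $2^{-p(\alpha+\beta)}$, since one $\alpha$-Hölder increment of $v$ or $u$ gets replaced by an $\alpha$-Hölder increment of $v-u$. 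I expect the only real bookkeeping obstacle to be tracking the polynomial dependence on $1+\lVert v\rVert_\alpha+\lVert u\rVert_\alpha$ through the product-rule terms — there are several ($\dD F(v)-\dD F(u)$ against $v_{pm}$, $\dD F(u)$ against $(v-u)_{pm}$, and the interpolation-operator oscillation terms), each requiring a slightly different combination of a Hölder bound on a derivative of $F$ and a Hölder increment — but each is routine, and Lemma~\ref{l:upm hoelder} again assembles the pieces into the claimed $\C^{\alpha+\beta}$ bound. The genuinely delicate point throughout is making sure the passage "$\lVert\cdot\rVert_\alpha$-norm $\Rightarrow$ pointwise increment over a generation-$p$ dyadic interval" costs no logarithmic factor; this is exactly the content of the tail bound in Lemma~\ref{l:ciesielski}, so I would cite it rather than reprove it.
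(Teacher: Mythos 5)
Your proposal is correct and follows essentially the same route as the paper's proof: a Taylor expansion of $F(v+w)$ around $h(t^1_{pm})$ on each dyadic cell, the observation that $(S_{p-1}\dD F(v+w))|_{[t^0_{pm},t^2_{pm}]}$ is the linear interpolation of $\dD F(v+w)$ so its deviation from $\dD F(h(t^1_{pm}))$ is controlled by the $\beta$--H\"older oscillation, and assembly of the resulting $2^{-p(\alpha+\beta)}$ coefficient bounds via Lemma~\ref{l:upm hoelder}; the Lipschitz estimate is likewise treated as a repetition of the same block-by-block argument with one extra derivative of $F$. The only differences are presentational (you split off the $w_{pm}$ contribution explicitly, whereas the paper absorbs it into the Taylor remainder $R_{pm}$), so there is nothing to correct.
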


\begin{proof}
   First note that $\lVert F(v+w) \rVert_\infty \le \lVert F \rVert_\infty$, which implies the required estimate for $(p,m) = (-1,0)$ and $(p,m) = (0,0)$. For all other values of $(p,m)$ we apply a Taylor expansion:
   \begin{align*}
      (F(v+w))_{pm} 
      = \dD F(v(t^1_{pm}) + w(t^1_{pm}))v_{pm} + R_{pm},
   \end{align*}
   where $|R_{pm}| \lesssim 2^{- p (\alpha+\beta)} \lVert F\rVert_{C^{1+\beta/\alpha}_b} (\lVert v \rVert_\alpha^{1+\beta/\alpha} +  \lVert w \rVert_{\alpha+\beta})$. 
   Subtracting $\pi_<(\dD F(v),v)$ gives
   \begin{align*}
      &F(v+w) - \pi_<(\dD F(v+w),v) \\
      &\hspace{60pt}= \sum_{pm} [\dD F(v(t^1_{pm}) + w(t^1_{pm})) - (S_{p-1} \dD F(v+w))|_{[t^0_{pm}, t^2_{pm}]}] v_{pm} \varphi_{pm} + R.
   \end{align*}
   Now $(S_{p-1} \dD F(v+w))|_{[t^0_{pm}, t^2_{pm}]}$ is the linear interpolation of $\dD F(v+w)$ between $t^0_{pm}$ and $t^2_{pm}$, so according to Lemma~\ref{l:upm hoelder} it suffices to note that
   \begin{align*}
      &\lVert [\dD F(v(t^1_{pm})+ w(t^1_{pm})) - (S_{p-1} \dD F(v+w))|_{[t^0_{pm}, t^2_{pm}]}] v_{pm}\rVert_{\infty} \\
      &\hspace{50pt} \lesssim 2^{-p\beta} \lVert \dD F(v+w) \rVert_{C^\beta} 2^{-p\alpha} \lVert v \rVert_\alpha \lesssim 2^{-p(\alpha+\beta)} \lVert F \rVert_{C^{1+\beta/\alpha}_b} (1+\lVert v \rVert_\alpha + \lVert w \rVert_\alpha)^{\beta/\alpha} \lVert v \rVert_\alpha.
   \end{align*}
   The local Lipschitz continuity is shown in the same way.
\end{proof}

\begin{rmk}
   Since $v$ has compact support, it actually suffices to have $F \in C^{1+\beta/\alpha}$ without assuming boundedness. Of course, then the estimates in Proposition~\ref{p:paralinearization} have to be adapted.
\end{rmk}

\begin{rmk}\label{r:gubinelli controlled implies our controlled}
   The same proof shows that if $f$ is controlled by $v$ in the sense of Section~\ref{s:ciesielski}, i.e. $f_{s,t} = f^v(s) v_{s,t} + R_{s,t}$ with $f^v \in \C^\alpha$ and $|R_{s,t}|\le \lVert R\rVert_{2\alpha} |t-s|^{2\alpha}$, then $f - \pi_<(f^v,v) \in \C^{2\alpha}$.
\end{rmk}

\subsection{Young's integral and its different components}\label{s:young}

In this section we construct Young's integral using the Schauder expansion. If $v \in \C^\alpha$ and $w \in \C^\beta$, then we formally define
\begin{align*}
   \int_0^\cdot v(s) \dd w(s) := \sum_{p,m} \sum_{q,n} v_{pm} w_{qn} \int_0^\cdot \varphi_{pm}(s) \dd \varphi_{qn}(s) = \sum_{p,q} \int_0^\cdot \Delta_p v(s) \dd \Delta_q w(s).
\end{align*}
We show that this definition makes sense provided that $\alpha+\beta>1$, and we identify three components of the integral that behave quite differently. This will be our starting point towards an extension of the integral beyond the Young regime.

In a first step, let us calculate the iterated integrals of Schauder functions.
\begin{lem}\label{l:iterated schauder integrals1}
   Let $p > q \ge 0$. Then
   \begin{align}\label{e:iterated schauder integral p>q}
      \int_0^1 \varphi_{pm}(s) \dd \varphi_{qn}(s) = 2^{-p - 2} \chi_{qn}(t^0_{pm})
   \end{align}
   for all $m,n$. If $p = q$, then $\int_0^1 \varphi_{pm}(s) \dd \varphi_{pn}(s) = 0$, except if $p = q = 0$, in which case the integral is bounded by 1. If $0 \le p < q$, then for all $(m,n)$ we have
   \begin{align}\label{e:iterated schauder integral q<p}
      \int_0^1 \varphi_{pm}(s) \dd \varphi_{qn}(s) = - 2^{-q - 2} \chi_{pm}\left(t^0_{qn}\right).
   \end{align}
   If $p=-1$, then the integral is bounded by 1.
\end{lem}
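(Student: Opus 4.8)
The plan is to reduce everything to elementary computations with piecewise-affine functions, using that $\varphi_{pm}$ is the primitive of $\chi_{pm}$, so that $\int_0^1 \varphi_{pm}(s)\,\mathrm d\varphi_{qn}(s) = \int_0^1 \varphi_{pm}(s)\chi_{qn}(s)\,\mathrm ds$ for all values of the indices. First I would record the two facts to be used repeatedly: for $p\ge 1$ and $1\le m\le 2^p$ the function $\varphi_{pm}$ is a tent supported on the dyadic interval $[t^0_{pm},t^2_{pm}]$ of length $2^{-p}$ with peak $\varphi_{pm}(t^1_{pm})=1/2$, whence $\int_0^1\varphi_{pm}(s)\,\mathrm ds = \tfrac12\cdot 2^{-p}\cdot\tfrac12 = 2^{-p-2}$; and $\int_0^1\chi_{pm}(s)\,\mathrm ds = \varphi_{pm}(1)-\varphi_{pm}(0) = 0$ for $p\ge 1$. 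The degenerate indices ($m=0$ or $n=0$, where the relevant Schauder function is $\equiv 0$; $(p,m)=(0,0)$ or $(q,n)=(0,0)$, where it is the identity and one reads ``$\chi_{00}\equiv 1$'' since $\mathrm d\varphi_{00}=\mathrm ds$; $p=-1$, where $\varphi_{-1,0}\equiv 1$) I would dispatch at the end; they are either trivially zero or reduce to integrals manifestly bounded by $1$.

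For the main case $p>q\ge 0$ (so $p\ge 1$), the key point is a dyadic nesting argument: since $p\ge q+1$, the support $[t^0_{pm},t^2_{pm}]$ of $\varphi_{pm}$ is contained in a single dyadic interval of generation $q+1$, and $\chi_{qn}$ — which equals $\pm 2^q$ on the two halves of its support (each a generation-$(q+1)$ interval) and $0$ elsewhere — is constant on every generation-$(q+1)$ interval. Hence $\chi_{qn}$ is constant, equal to $\chi_{qn}(t^0_{pm})$, on the support of $\varphi_{pm}$, and one pulls it out: $\int_0^1\varphi_{pm}(s)\chi_{qn}(s)\,\mathrm ds = \chi_{qn}(t^0_{pm})\int_0^1\varphi_{pm}(s)\,\mathrm ds = 2^{-p-2}\chi_{qn}(t^0_{pm})$, which is \eqref{e:iterated schauder integral p>q}.

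The case $p=q$ splits: if $m\ne n$ (and $p\ge 1$) the supports $[t^0_{pm},t^2_{pm}]$ and $[t^0_{pn},t^2_{pn}]$ are distinct generation-$p$ dyadic intervals with disjoint interiors, so the integral vanishes; if $m=n$ (and $p\ge 1$) integration by parts gives $2\int_0^1\varphi_{pm}\,\mathrm d\varphi_{pm} = \varphi_{pm}(1)^2-\varphi_{pm}(0)^2 = 0$. What remains is $p=q=0$, where $m,n\in\{0,1\}$ and $\int_0^1\varphi_{0m}(s)\,\mathrm d\varphi_{0n}(s)$ is one of the four elementary integrals $1/2$, $-1/4$, $1/4$, $0$, all bounded by $1$. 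Finally, for $0\le p<q$ (so $q\ge 1$) the boundary terms vanish since $\varphi_{qn}(0)=\varphi_{qn}(1)=0$, so integration by parts yields $\int_0^1\varphi_{pm}\,\mathrm d\varphi_{qn} = -\int_0^1\varphi_{qn}\,\mathrm d\varphi_{pm}$; as the first index now exceeds the second, \eqref{e:iterated schauder integral p>q} applies and gives $-2^{-q-2}\chi_{pm}(t^0_{qn})$, which is \eqref{e:iterated schauder integral q<p}. And for $p=-1$, $\int_0^1\varphi_{-1,0}\,\mathrm d\varphi_{qn} = \varphi_{qn}(1)-\varphi_{qn}(0)$ is $0$ unless $(q,n)=(0,0)$, where it equals $1$, hence always bounded by $1$.

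I do not expect a serious obstacle here: the only point needing genuine (though still elementary) care is the dyadic nesting claim in the case $p>q$ — that the small tent $\varphi_{pm}$ is supported inside a single interval on which the coarser Haar function $\chi_{qn}$ is constant — after which every case is a one-line computation or a single integration by parts. Everything else is bookkeeping of the boundary and degenerate indices $m=0$, $n=0$, $p=-1$, and $(p,m)=(0,0)$.
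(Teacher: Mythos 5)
Your proof is correct and follows essentially the same route as the paper: for $p>q\ge 0$ you pull out the constant value $\chi_{qn}(t^0_{pm})$ of $\chi_{qn}$ on the support of $\varphi_{pm}$ and use $\int_0^1\varphi_{pm}(s)\,\dd s=2^{-p-2}$, and for $0\le p<q$ you reduce to that case by integration by parts, exactly as in the paper. The only difference is that you write out the ``easy'' cases ($p=q$, $p=-1$ and the degenerate indices) which the paper dismisses without comment, and your treatment of these is fine.
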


\begin{proof}
   The cases $p = q$ and $p=-1$ are easy, so let $p > q \ge 0$. Since $\chi_{qn} \equiv \chi_{qn}(t^0_{pm})$ on the support of $\varphi_{pm}$, we have
   \begin{align*}
      \int_0^1 \varphi_{pm}(s) \dd \varphi_{qn}(s) = \chi_{qn}(t^0_{pm}) \int_0^1 \varphi_{pm}(s) \dd s = \chi_{qn}(t^0_{pm}) 2^{-p-2}.
   \end{align*}
   If $0 \le p < q$, then integration by parts and \eqref{e:iterated schauder integral p>q} imply \eqref{e:iterated schauder integral q<p}.
\end{proof}

Next we estimate the coefficients of iterated integrals in the Schauder basis.

\begin{lem}\label{l:schauder coefficients of iterated integrals}
   Let $i,p\ge -1$, $q \ge 0$, $0\le j \le 2^i$, $0\le m \le 2^p$, $0\le n \le 2^q$. Then
   \begin{align}\label{e:schauder coefficients of iterated integrals good}
      2^{-i} \Big|\Big\langle \chi_{ij}, \dd\Big(\int_0^\cdot\varphi_{pm} \chi_{qn}\dd s\Big)\Big\rangle\Big| \le 2^{-2(i \vee p \vee q) + p + q},
   \end{align}
   except if $p<q=i$. In this case we only have the worse estimate
   \begin{align}\label{e:schauder coefficients of iterated integrals bad}
      2^{-i} \Big|\Big\langle \chi_{ij}, \dd\Big(\int_0^\cdot\varphi_{pm} \chi_{qn}\dd s\Big)\Big\rangle\Big| \le 1.
   \end{align}
\end{lem}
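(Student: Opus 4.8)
The plan is to unwind the left-hand side into a bare estimate on second-order increments of the primitive $\Psi := \int_0^\cdot \varphi_{pm}(s)\chi_{qn}(s)\,\dd s$, and then to split into cases according to which of $i,p,q$ is largest. Since $\langle\chi_{ij},\dd f\rangle=2^if_{ij}$ by definition, the quantity to be bounded is exactly $|\Psi_{ij}|$, the modulus of the $(i,j)$-th Schauder coefficient of $\Psi$; for $i\ge0$, $j\ge1$ this is the second-order increment $\Psi_{ij}=\int_{t^0_{ij}}^{t^1_{ij}}\varphi_{pm}\chi_{qn}\,\dd s-\int_{t^1_{ij}}^{t^2_{ij}}\varphi_{pm}\chi_{qn}\,\dd s$. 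First I would dispatch the boundary and degenerate indices: $j=0$ is trivial (then $\varphi_{i0}\equiv0$); $n=0$ with $q\ge1$ is trivial (then $\Psi\equiv0$); for $(i,j)\in\{(-1,0),(0,0)\}$ one has $\Psi_{ij}\in\{0,\int_0^1\varphi_{pm}\chi_{qn}\,\dd s\}$, so the claim follows from Lemma~\ref{l:iterated schauder integrals1}; and for $p=-1$ one has $\Psi=\int_0^\cdot\chi_{qn}\,\dd s=\varphi_{qn}$, so $|\Psi_{ij}|=\delta_{(i,j),(q,n)}$ (a Schauder function has exactly these coefficients; cf.\ Lemma~\ref{l:schauder functions give linear interpolation}), which is $\le1$ and, when $=1$, forces $i=q>-1=p$, the exceptional case. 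So from now on $i,p\ge0$, $q\ge1$, $n\ge1$, $j\ge1$; set $r:=i\vee p\vee q$. I will use the elementary facts $\|\varphi_{pm}\|_\infty\le1$, $|\varphi_{pm}'|\le2^p$, $\int_0^1\varphi_{pm}\,\dd s=2^{-p-2}$, $\|\chi_{qn}\|_\infty\le2^q$, $\int_{\mathrm{supp}\,\chi_{qn}}\chi_{qn}\,\dd s=0$, $\int_{\mathrm{supp}\,\chi_{qn}}(s-t^1_{qn})\chi_{qn}(s)\,\dd s=-2^{-q-2}$, together with the ``crude bound'' $|\Psi_{ij}|\le\int_{t^0_{ij}}^{t^2_{ij}}|\varphi_{pm}\chi_{qn}|\,\dd s\le2^q\,|[t^0_{ij},t^2_{ij}]\cap\mathrm{supp}\,\varphi_{pm}\cap\mathrm{supp}\,\chi_{qn}|\le2^q2^{-r}$, and the remark that $\Psi_{ij}=0$ as soon as $\mathrm{supp}\,\varphi_{pm}\cap\mathrm{supp}\,\chi_{qn}$ misses the interior of $[t^0_{ij},t^2_{ij}]$.

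The geometric input is pure dyadic bookkeeping: $\mathrm{supp}\,\varphi_{pm}$, $\mathrm{supp}\,\chi_{qn}$, $[t^0_{ij},t^2_{ij}]$, $[t^0_{ij},t^1_{ij}]$, $[t^1_{ij},t^2_{ij}]$ are dyadic intervals of levels $p,q,i,i+1,i+1$; any two dyadic intervals are nested or have disjoint interiors; $\varphi_{pm}$ is affine on every dyadic interval of level exceeding $p$; and $\chi_{qn}$ is constant on every dyadic interval of level exceeding $q$. Hence: $i>p$ gives $\varphi_{pm}$ affine on $[t^0_{ij},t^2_{ij}]$; $i>q$ gives $\chi_{qn}$ constant there; $q>p$ gives $\varphi_{pm}$ affine on $\mathrm{supp}\,\chi_{qn}$; and $p>i$ (resp.\ $q>i$) forces $\mathrm{supp}\,\varphi_{pm}$ (resp.\ $\mathrm{supp}\,\chi_{qn}$) to lie inside $[t^0_{ij},t^1_{ij}]$ or $[t^1_{ij},t^2_{ij}]$, or else to have interior disjoint from $[t^0_{ij},t^2_{ij}]$.

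Now the case split. If $i>p$ and $i>q$ (so $r=i$): $\Psi'=\varphi_{pm}\chi_{qn}$ is affine on $[t^0_{ij},t^2_{ij}]$, so $\Psi$ agrees there with a quadratic $Q$, and since $t^1_{ij}$ is the midpoint, $|\Psi_{ij}|=\tfrac14(2^{-i})^2|Q''|$ with $|Q''|=|\varphi_{pm}'\chi_{qn}|\le2^{p+q}$, whence $|\Psi_{ij}|\le2^{p+q-2i-2}\le2^{-2r+p+q}$. If $p>i$ and $p\ge q$ (so $r=p$): either $\Psi_{ij}=0$ or $\Psi_{ij}=\pm\int_{\mathrm{supp}\,\varphi_{pm}}\varphi_{pm}\chi_{qn}\,\dd s$; when $q<p$, $\chi_{qn}$ is constant (modulus $\le2^q$) on $\mathrm{supp}\,\varphi_{pm}$ and this integral equals that constant times $\int_0^1\varphi_{pm}=2^{-p-2}$, so $|\Psi_{ij}|\le2^{q-p-2}\le2^{-2r+p+q}$; when $q=p$ the integral vanishes (it is zero unless $m=n$, and then it is $\tfrac12(\varphi_{pm}(1)^2-\varphi_{pm}(0)^2)=0$). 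If $q>i$ and $q>p$ (so $r=q$): either $\Psi_{ij}=0$ or $\Psi_{ij}=\pm\int_{\mathrm{supp}\,\chi_{qn}}\varphi_{pm}\chi_{qn}\,\dd s$, and writing $\varphi_{pm}(s)=\varphi_{pm}(t^1_{qn})+\varphi_{pm}'(s-t^1_{qn})$ on $\mathrm{supp}\,\chi_{qn}$ (legitimate, as $\varphi_{pm}$ is affine there) the two moment identities reduce this integral to $\varphi_{pm}'\cdot(-2^{-q-2})$, so $|\Psi_{ij}|\le2^{p-q-2}\le2^{-2r+p+q}$. The only case left is $i=r$ with $i=p$ or $i=q$; there the crude bound already gives $|\Psi_{ij}|\le2^q2^{-r}$, which equals $2^{-2r+p+q}$ if $i=p$ and is $\le1$ if $i=q$ — and $i=q$ together with $p<q$ is precisely the exceptional case $p<q=i$, for which only the bound $1$ is claimed.

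The difficulty is not in any single estimate — each of the four is a one-liner — but in the organization: verifying the dyadic nesting and affine/constant statements, and checking that in each regime the two halves of $[t^0_{ij},t^2_{ij}]$ are exploited correctly (they fail to cancel precisely when $q=i$ strictly dominates $p$, whence the exceptional bound). The slightly ad hoc conventions at $(i,j)\in\{(-1,0),(0,0)\}$ and $p=-1$ must be handled separately, but are immediate.
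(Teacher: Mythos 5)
Your proof is correct and follows essentially the same route as the paper's: a case distinction according to which of $i,p,q$ is largest, exploiting that Haar/Schauder functions are constant respectively affine on finer dyadic intervals, with the diagonal case $i=q\ge p$ handled by the crude bound (exceptional only when $q>p$); the paper merely channels the elementary integral computations through Lemma~\ref{l:iterated schauder integrals1}, where you redo them by hand via the quadratic second-difference and the moment identities of $\chi_{qn}$. One bookkeeping slip: your reductions do not actually justify ``from now on $q\ge1$, $n\ge1$'' (the case $q=0$ is never dispatched), but this is harmless, since for $q=0$ only your cases 1, 2 and 4 can occur and they use nothing beyond $\lVert\chi_{0n}\rVert_\infty\le 2^0$ and constancy of $\chi_{0n}$ on finer dyadic intervals, which hold for $\chi_{00}\equiv1$ and $\chi_{01}$ alike.
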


\begin{proof}
   We have $\langle \chi_{-10}, \dd(\int_0^\cdot \varphi_{pm} \chi_{qn}\dd s)\rangle = 0$ for all $(p,m)$ and $(q,n)$. So let $i \ge 0$. If $i < p \vee q$, then $\chi_{ij}$ is constant on the support of $\varphi_{pm}\chi_{qn}$, and therefore Lemma~\ref{l:iterated schauder integrals1} gives
   \[
      2^{-i} \left|\langle \chi_{ij},\varphi_{pm} \chi_{qn}\rangle\right| \le \left|\langle \varphi_{pm}, \chi_{qn}\rangle\right| \le 2^{ p + q -2(p\vee q)} = 2^{-2(i \vee p \vee q) + p + q}.
   \]

   Now let $i > q$. Then $\chi_{qn}$ is constant on the support of $\chi_{ij}$, and therefore another application of Lemma~\ref{l:iterated schauder integrals1} implies that
   \[
      2^{-i} \left|\langle \chi_{ij}, \varphi_{pm}\chi_{qn}\rangle\right| \le 2^{-i} 2^q 2^{p+i -2(p\vee i)} = 2^{-2(i \vee p \vee q) + p + q}.
   \]

   The only remaining case is $i=q \ge p$, in which
   \[
      2^{-i} \left|\langle \chi_{ij},\varphi_{pm} \chi_{qn}\rangle\right| \le 2^{i} \int_{t^0_{ij}}^{t^2_{ij}} \varphi_{pm}(s) \dd s \le \lVert \varphi_{pm} \rVert_\infty \le 1.
   \]
\end{proof}


\begin{cor}\label{c:schauder blocks}
   Let $i, p\ge -1$ and $q \ge 0$. Let $v \in C([0,1],\L(\R^d,\R^n))$ and $w \in C([0,1],\R^d)$. Then
   \begin{align}\label{e:schauder blocks good}
      \Big\lVert \Delta_i\Big(\int_0^\cdot \Delta_p v(s) \dd \Delta_q w(s)\Big)\Big\rVert_\infty \lesssim 2^{-(i\vee p\vee q) - i+p+q} \lVert \Delta_p v \rVert_\infty \lVert \Delta_q w \rVert_\infty,
   \end{align}
   except if $i=q>p$. In this case we only have the worse estimate
   \begin{align}\label{e:schauder blocks bad}
      \Big \lVert \Delta_i\Big(\int_0^\cdot \Delta_p v (s) \dd \Delta_q w(s)\Big)\Big\rVert_\infty \lesssim \lVert \Delta_p v \rVert_\infty \lVert \Delta_q w \rVert_\infty.
   \end{align}
\end{cor}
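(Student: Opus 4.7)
The plan is to expand $\Delta_p v$ and $\Delta_q w$ in the Schauder basis and reduce the estimate to a uniform bound on the individual coefficients of $f := \int_0^\cdot \Delta_p v(s) \dd \Delta_q w(s)$. Since at each level $i$ the functions $\varphi_{ij}$ have essentially disjoint supports with $\lVert \varphi_{ij} \rVert_\infty \le 1$, one has $\lVert \Delta_i f \rVert_\infty \le \sup_j |f_{ij}|$, where $f_{ij} = 2^{-i}\langle \chi_{ij}, \dd f\rangle$. Writing $\dd \Delta_q w(s) = \sum_n w_{qn} \chi_{qn}(s) \dd s$, this gives
\[
   f_{ij} = \sum_{m,n} v_{pm} w_{qn} A_{ij,pm,qn}, \qquad A_{ij,pm,qn} := 2^{-i} \Bigl\langle \chi_{ij}, \dd\Bigl(\int_0^\cdot \varphi_{pm} \chi_{qn} \dd s\Bigr)\Bigr\rangle,
\]
and the coefficients $A_{ij,pm,qn}$ are exactly those bounded in Lemma~\ref{l:schauder coefficients of iterated integrals}. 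I would combine this with the elementary bound $|v_{pm}| \lesssim \lVert \Delta_p v \rVert_\infty$ (and its analogue for $w_{qn}$), which again follows from the essentially disjoint supports of $\{\varphi_{pm}\}_m$ together with $\lVert \varphi_{pm}\rVert_\infty = 1/2$.

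The second ingredient is a counting argument. For a fixed pair $(i,j)$, the coefficient $A_{ij,pm,qn}$ vanishes unless the supports of $\chi_{ij}$, $\varphi_{pm}$ and $\chi_{qn}$ simultaneously overlap. These supports are dyadic intervals of lengths $2^{-i}$, $2^{-p}$ and $2^{-q}$, and a routine geometric count shows that the number of pairs $(m,n)$ producing a triple overlap is bounded by a constant multiple of $2^{(p \vee q - i)_+}$. Concretely, at most $O(1)$ pairs contribute when $i \ge p \vee q$, while $O(2^{p \vee q - i})$ pairs contribute in the opposite regime $i < p \vee q$.

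Combining the two ingredients and splitting according to the regimes of Lemma~\ref{l:schauder coefficients of iterated integrals}, in the ``good'' regime the per-term bound $|A_{ij,pm,qn}| \le 2^{-2(i \vee p \vee q) + p + q}$ together with the pair count $O(2^{(p \vee q - i)_+})$ yields exactly $2^{-(i \vee p \vee q) - i + p + q} \lVert \Delta_p v \rVert_\infty \lVert \Delta_q w \rVert_\infty$, as can be checked by treating $i \ge p \vee q$ and $i < p \vee q$ separately and simplifying the dyadic exponents. In the exceptional case $p < q = i$, the intervals $\mathrm{supp}\,\chi_{ij}$ and $\mathrm{supp}\,\chi_{qn}$ have equal length $2^{-q}$, so they either coincide or are disjoint, and only $O(1)$ pairs $(m,n)$ contribute; here the worse per-term bound $|A_{ij,pm,qn}| \le 1$ still delivers the estimate \eqref{e:schauder blocks bad}.

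The main obstacle is organising the geometric counting uniformly, because the ordering of the three indices $i, p, q$ produces several sub-cases in which the smallest of the intervals is alternately $\mathrm{supp}\,\chi_{ij}$, $\mathrm{supp}\,\varphi_{pm}$ or $\mathrm{supp}\,\chi_{qn}$; once the count $O(2^{(p \vee q - i)_+})$ has been verified in each, the remainder is routine arithmetic with dyadic exponents. Boundary values $i = -1$ or $p = -1$ require no special treatment, since Lemma~\ref{l:schauder coefficients of iterated integrals} already incorporates them into its statement.
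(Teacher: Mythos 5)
Your proposal is correct and takes essentially the same route as the paper's proof: expand $\Delta_i\bigl(\int_0^\cdot \Delta_p v\,\dd \Delta_q w\bigr)$ in the Schauder basis, bound each coefficient $\langle 2^{-i}\chi_{ij},\varphi_{pm}\chi_{qn}\rangle$ via Lemma~\ref{l:schauder coefficients of iterated integrals}, use $|v_{pm}|\lesssim\lVert\Delta_p v\rVert_\infty$, $|w_{qn}|\lesssim\lVert\Delta_q w\rVert_\infty$, and count the contributing pairs $(m,n)$ for fixed $j$; your count $2^{(p\vee q-i)_+}$ coincides with the paper's $2^{(i\vee p\vee q)-i}$, and your treatment of the exceptional case $p<q=i$ matches as well. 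No gap to report.
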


\begin{proof}
   The case $i = -1$ is easy, so let $i \ge 0$. We have
   \begin{align*}
      \Delta_i\Big(\int_0^\cdot \Delta_p v(s) \dd \Delta_q w(s)\Big) = \sum_{j,m,n} v_{pm} w_{qn} \langle 2^{-i} \chi_{ij}, \varphi_{pm} \chi_{qn}\rangle \varphi_{ij}.
   \end{align*}
   For fixed $j$, there are at most $2^{(i\vee p\vee q) - i}$ non-vanishing terms in the double sum. 
   Hence, we obtain from Lemma~\ref{l:schauder coefficients of iterated integrals} that
   \begin{align*}
      \Big\lVert \sum_{m,n} v_{pm} w_{qn} \langle 2^{-i} \chi_{ij}, \varphi_{pm} \chi_{qn}\rangle \varphi_{ij}\Big\rVert_\infty & \lesssim 2^{(i\vee p\vee q) - i} \lVert \Delta_p v \rVert_\infty \lVert \Delta_q w \rVert_\infty  (2^{-2(i\vee p \vee q) + p + q} + \1_{i=q>p}) \\
      & = (2^{-(i\vee p\vee q) - i + p + q} + \1_{i=q>p}) \lVert \Delta_p v \rVert_\infty \lVert \Delta_q w \rVert_\infty.
   \end{align*}
\end{proof}

\begin{cor}\label{c:schauder blocks product}
   Let $i,p,q \ge -1$. Let $v \in C([0,1],\L(\R^d,\R^n))$ and $w \in C([0,1],\R^d)$. Then for $p \vee q \le i$ we have
   \begin{align}\label{e:schauder blocks product good}
      \left\lVert \Delta_i\left(\Delta_p v \Delta_q w\right)\right\rVert_\infty \lesssim 2^{-(i\vee p\vee q) - i+p+q} \lVert \Delta_p v \rVert_\infty \lVert \Delta_q w \rVert_\infty,
   \end{align}
   except if $i=q>p$ or $i=p>q$, in which case we only have the worse estimate
   \begin{align}\label{e:schauder blocks product bad}
      \left \lVert \Delta_i(\Delta_p v \Delta_q w)\right\rVert_\infty \lesssim \lVert \Delta_p v \rVert_\infty \lVert \Delta_q w \rVert_\infty.
   \end{align}
   If $p > i$ or $q>i$, then $\Delta_i(\Delta_p v \Delta_q w) \equiv 0$.
\end{cor}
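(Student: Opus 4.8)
The plan is to reduce the product to a sum of iterated integrals of Schauder blocks and then quote Corollary~\ref{c:schauder blocks}. Since $\Delta_p v$ and $\Delta_q w$ are continuous and piecewise affine, hence of bounded variation, the integration-by-parts (Leibniz) rule gives, for every $t \in [0,1]$,
\begin{align*}
   \Delta_p v(t)\,\Delta_q w(t) = \Delta_p v(0)\,\Delta_q w(0) + \int_0^t \Delta_p v(s)\,\dd\Delta_q w(s) + \int_0^t \Delta_q w(s)\,\dd\Delta_p v(s),
\end{align*}
and the finitely many Riemann--Stieltjes integrals appearing here are precisely of the form handled in Corollary~\ref{c:schauder blocks}.

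First I would dispose of the case $p>i$ or $q>i$, where the claim is $\Delta_i(\Delta_p v\Delta_q w)\equiv 0$. Say $q>i$. Then $\Delta_q w$ vanishes at every dyadic point of level $\le q$: for $q\ge 1$ each $G_{qn}$ is supported in $[t^0_{qn},t^2_{qn}]$ and vanishes at its two endpoints, so $\varphi_{qn}(k 2^{-q})=0$ for all integers $k$, and $q\in\{-1,0\}$ is checked by hand. Since every Schauder coefficient $(\Delta_p v\Delta_q w)_{ij}$ (as well as the $(-1,0)$ and $(0,0)$ ones) is a fixed linear combination of values of $\Delta_p v\,\Delta_q w$ at dyadic points of level $\le i+1\le q$, it vanishes, hence $\Delta_i(\Delta_p v\Delta_q w)\equiv 0$. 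The case $p>i$ is symmetric.

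Now suppose $p\vee q\le i$. As $\Delta_r f(0)=0$ for $r\ge 0$, the constant term $\Delta_p v(0)\Delta_q w(0)$ vanishes unless $p=q=-1$; in that remaining case $\Delta_p v\Delta_q w$ is constant, so $\Delta_i$ annihilates it for $i\ge 0$, while for $i=-1$ the asserted bound is immediate (its prefactor equals $2^{0}=1$ and both sides coincide). Otherwise at least one of $p,q$ is $\ge 0$; if $q=-1$ the first integral is zero (constant integrator) and if $p=-1$ the second is, and every surviving integral has integrator level $\ge 0$. Applying Corollary~\ref{c:schauder blocks} to $\Delta_i(\int_0^\cdot\Delta_p v\,\dd\Delta_q w)$ (integrand level $p$, integrator level $q$) and to $\Delta_i(\int_0^\cdot\Delta_q w\,\dd\Delta_p v)$ (the two roles exchanged), and summing, yields $\lVert\Delta_i(\Delta_p v\Delta_q w)\rVert_\infty\lesssim 2^{-(i\vee p\vee q)-i+p+q}\lVert\Delta_p v\rVert_\infty\lVert\Delta_q w\rVert_\infty$ provided neither integral enters the exceptional regime of Corollary~\ref{c:schauder blocks}. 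That regime is entered exactly when the integrator sits at level $i$, i.e.\ when $i=q>p$ or $i=p>q$ (these are mutually exclusive), and then the corresponding integral carries only the bound $\lVert\Delta_p v\rVert_\infty\lVert\Delta_q w\rVert_\infty$; the other integral, if present, is not exceptional, and since $p\vee q\le i$ its weight $2^{-(i\vee p\vee q)-i+p+q}$ is at most $1$, so it is dominated by the same quantity, giving \eqref{e:schauder blocks product bad}.

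The genuinely substantial estimate has been outsourced to Corollary~\ref{c:schauder blocks}, which we may use; what is left is bookkeeping. The one point demanding care is checking that the two exceptional regimes of that corollary, pulled back through the two integrals, reassemble precisely into the stated exceptions $i=q>p$ and $i=p>q$, together with the handful of trivial evaluations at levels $-1$ and $0$.
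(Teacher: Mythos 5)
Your proposal is correct and follows essentially the same route as the paper's proof: integration by parts reduces the product to the two iterated integrals covered by Corollary~\ref{c:schauder blocks} (whose good and bad estimates are symmetric in $p$ and $q$), and the vanishing claim for $p>i$ or $q>i$ comes from the higher-level block vanishing at the dyadic evaluation points of level $i$. You merely spell out the level $-1$ and $0$ bookkeeping and the reassembly of the exceptional cases, which the paper dismisses as ``easy''.
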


\begin{proof}
   The case $p=-1$ or $q=-1$ is easy. Otherwise we apply integration by parts and note that the estimates \eqref{e:schauder blocks good} and \eqref{e:schauder blocks bad} are symmetric in $p$ and $q$. If for example $p>i$, then $\Delta_p(v)(t^k_{ij}) = 0$ for all $k,j$, which implies that $\Delta_i (\Delta_p v \Delta_q w) = 0$.
\end{proof}

The estimates \eqref{e:schauder blocks good} and \eqref{e:schauder blocks bad} allow us to identify different components of the integral $\int_0^\cdot v(s) \dd w(s)$. More precisely, \eqref{e:schauder blocks bad} indicates that the series $\sum_{p<q} \int_0^\cdot \Delta_p v(s) \dd \Delta_q w(s)$ is rougher than the remainder $\sum_{p \ge q} \int_0^\cdot \Delta_p v(s) \dd \Delta_q w(s)$. Integration by parts gives
\[
   \sum_{p<q} \int_0^\cdot \Delta_p v(s) \dd \Delta_q w(s) = \pi_<(v,w) - \sum_{p<q} \sum_{m,n} v_{pm} w_{qn} \int_0^\cdot \varphi_{qn}(s) \dd \varphi_{pm}(s).
\]
This motivates us to decompose the integral into three components, namely
\begin{align*}
    \sum_{p,q} \int_0^\cdot \Delta_p v(s) \dd \Delta_q w(s) = L(v,w) + S(v,w) + \pi_<(v,w).
\end{align*}
Here $L$ is defined as the antisymmetric \emph{L\'evy area} (we will justify the name below by showing that $L$ is closely related to the L\'evy area of certain dyadic martingales):
\begin{align*}
   L(v,w) :=\,& \sum_{p>q} \sum_{m,n} (v_{pm} w_{qn} - v_{qn} w_{pm}) \int_0^\cdot \varphi_{pm} \dd \varphi_{qn}\\
   =\,& \sum_{p} \left(\int_0^\cdot \Delta_p v \dd S_{p-1} w - \int_0^\cdot \dd (S_{p-1} v) \Delta_{p} w\right).
\end{align*}
The \emph{symmetric part} $S$ is defined as
\begin{align*}
   S(v,w) :=\, & \sum_{m,n \le 1} v_{0m} w_{0n} \int_0^\cdot \varphi_{0m} \dd \varphi_{0n} + \sum_{p\ge 1} \sum_m v_{pm} w_{pm} \int_0^\cdot \varphi_{pm} \dd \varphi_{pm} \\
   =\,& \sum_{m,n\le 1} v_{0m} w_{0n} \int_0^\cdot \varphi_{0m} \dd \varphi_{0n} + \frac{1}{2} \sum_{p\ge 1} \Delta_p v \Delta_p w,
\end{align*}
and $\pi_<$ is the paraproduct defined in \eqref{e:paraproduct definition}. As we observed in Lemma~\ref{l:paraproduct definition}, $\pi_<(v,w)$ is always well defined, and it inherits the regularity of $w$. Let us study $S$ and $L$.

\begin{lem}\label{l:Levy area regularity}
   Let $\alpha, \beta \in (0,1)$ be such that $\alpha + \beta > 1$. Then $L$ is a bounded bilinear operator from $\C^\alpha \times \C^\beta$ to $\C^{\alpha+\beta}$.
\end{lem}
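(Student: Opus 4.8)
The plan is to estimate the Schauder blocks $\Delta_i L(v,w)$ using the block estimates from Corollaries~\ref{c:schauder blocks} and~\ref{c:schauder blocks product}, and then to invoke the characterization $\C^{\alpha+\beta} = \{f : \sup_i 2^{i(\alpha+\beta)}\lVert \Delta_i f\rVert_\infty < \infty\}$. Write $L(v,w) = A(v,w) - A(w,v)$ where $A(v,w) := \sum_p \int_0^\cdot \Delta_p v \,\dd S_{p-1} w$; by antisymmetry it suffices to bound $\lVert \Delta_i A(v,w)\rVert_\infty$ (and the roles of $\alpha,\beta$ get swapped in the second term, but since $\alpha+\beta$ is symmetric this is harmless). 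So the core task is to show $2^{i(\alpha+\beta)} \lVert \Delta_i A(v,w)\rVert_\infty \lesssim \lVert v\rVert_\alpha \lVert w\rVert_\beta$ uniformly in $i \ge -1$.

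First I would expand $A(v,w) = \sum_{p > q} \int_0^\cdot \Delta_p v\,\dd\Delta_q w$ and apply $\Delta_i$ termwise, so that the quantity to control is $\sum_{p>q} \lVert \Delta_i(\int_0^\cdot \Delta_p v\,\dd\Delta_q w)\rVert_\infty$. Here Corollary~\ref{c:schauder blocks} applies with the constraint $p > q$. The key point is that the bad case of Corollary~\ref{c:schauder blocks}, namely $i = q > p$, cannot occur when $p > q$, so we always have the good estimate $\lVert \Delta_i(\int_0^\cdot \Delta_p v\,\dd\Delta_q w)\rVert_\infty \lesssim 2^{-(i\vee p\vee q)-i+p+q}\lVert\Delta_p v\rVert_\infty\lVert\Delta_q w\rVert_\infty \lesssim 2^{-(i\vee p)-i+p}\,2^{-p\alpha}2^{-q\beta}\lVert v\rVert_\alpha\lVert w\rVert_\beta$, using $q < p$ so $p\vee q = p$ and $\lVert\Delta_p v\rVert_\infty \le 2^{-p\alpha}\lVert v\rVert_\alpha$, etc. Then I would split the sum over $p>q$ according to whether $p \le i$ or $p > i$ (in both sub-regimes $i\vee p$ is determined), sum the geometric series in $q$ from $0$ up to $p-1$ — this produces a factor $\simeq 2^{-q\beta}$ evaluated at the largest $q$, i.e. essentially $2^{-p\beta}$ times constants since $\beta>0$ — and then sum over $p$. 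In the regime $p \le i$ one gets $\sum_{p\le i} 2^{-i}\,2^{-p\alpha}\cdot 2^{-\min(1,\beta) p}\cdot(\text{const})$; the exponent of $2^{-p}$ here is $\alpha + (\text{something positive})$, wait — I need $\alpha+\beta-1 > 0$ to enter. Let me be careful: the factor from the $q$-sum is $\sum_{q<p} 2^{-q\beta} \simeq 1$ (bounded, since $\beta > 0$), but that is too lossy. The correct bookkeeping is to keep $2^{(p+q) - 2(p\vee q)} = 2^{q-p}$ from Corollary~\ref{c:schauder blocks} and combine with $2^{-q\beta}$: summing $\sum_{q<p} 2^{q-p} 2^{-q\beta} = 2^{-p}\sum_{q<p} 2^{q(1-\beta)} \simeq 2^{-p}2^{p(1-\beta)} = 2^{-p\beta}$ (using $\beta < 1$). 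Combining with the remaining factors $2^{-i}2^{-p\alpha}\lVert v\rVert_\alpha\lVert w\rVert_\beta$ in the regime $p\le i$ gives $\sum_{p\le i} 2^{-i}2^{-p\alpha}2^{-p\beta} \lesssim 2^{-i}$ — hmm, that loses the full $2^{-i(\alpha+\beta)}$. I think I have the $i$-dependence slightly off; I would recompute using $2^{-(i\vee p)-i+p}$: for $p\le i$ this is $2^{-2i+p}$... Actually the cleaner route, which I would follow in the writeup, is: for $p \le i$ use $2^{-i - i + p + q}\cdot 2^{q-p} \cdot 2^{-(p+q)}$... — the point I want to land on is $\sum_p 2^{-i(\alpha+\beta)}$-worth of summands, and the arithmetic works out precisely because $\alpha+\beta>1$ makes the otherwise-divergent $q$-sum in the regime $p>i$ converge.

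\textbf{The main obstacle.} The delicate step is the regime $p > i$, i.e.\ the high-frequency-integrand contributions. There $i\vee p\vee q = p$ and the good estimate gives $\lVert\Delta_i(\int\Delta_p v\,\dd\Delta_q w)\rVert_\infty \lesssim 2^{-p - i + p + q}\lVert\Delta_p v\rVert_\infty\lVert\Delta_q w\rVert_\infty = 2^{-i+q}2^{-p\alpha}2^{-q\beta}\lVert v\rVert_\alpha\lVert w\rVert_\beta$, and now summing $\sum_{q<p} 2^{q(1-\beta)} \simeq 2^{p(1-\beta)}$ so each $p$ contributes $2^{-i}2^{p(1-\beta)}2^{-p\alpha} = 2^{-i}2^{p(1-\alpha-\beta)}$; since $1-\alpha-\beta < 0$ the sum $\sum_{p>i} 2^{p(1-\alpha-\beta)} \simeq 2^{i(1-\alpha-\beta)}$ converges and yields $2^{-i}2^{i(1-\alpha-\beta)} = 2^{-i(\alpha+\beta)}$, exactly as needed. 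This is precisely where the Young condition $\alpha+\beta>1$ is used, and getting the geometric sums and the interplay of the three indices $i,p,q$ right (including the boundary term $q=0$ and the $i=-1$ case, which are handled separately and trivially) is the crux. Once both regimes are summed, $\lVert\Delta_i A(v,w)\rVert_\infty \lesssim 2^{-i(\alpha+\beta)}\lVert v\rVert_\alpha\lVert w\rVert_\beta$, hence the same for $L$ by antisymmetry, which is the claim. Boundedness of the bilinear map and its norm bound follow immediately, and linearity in each argument is clear from the series definition.
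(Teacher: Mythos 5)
Your plan is essentially the paper's own proof: split $L$ into the two triangular parts $\sum_{p>q}\int\Delta_p v\,\dd\Delta_q w$ (plus the swapped term), observe that the constraint $p>q$ rules out the bad case of Corollary~\ref{c:schauder blocks}, split the $p$-sum at $p\le i$ versus $p>i$, and use $\alpha,\beta<1$ for the geometric sums with $\alpha+\beta>1$ needed only in the regime $p>i$. The only wobble is your bookkeeping for $p\le i$, which does close exactly as in the paper: there $i\vee p\vee q=i$, so the factor is $2^{-2i+p+q}$, and $\sum_{p\le i}\sum_{q<p}2^{-2i+p+q}2^{-p\alpha}2^{-q\beta}\lesssim 2^{-2i}\sum_{p\le i}2^{p(2-\alpha-\beta)}\simeq 2^{-i(\alpha+\beta)}$, using only $\alpha+\beta<2$.
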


\begin{proof}
   We only argue for $\sum_{p} \int_0^\cdot \Delta_p v \dd S_{p-1} w$, the term $- \int_0^\cdot \dd (S_{p-1} v) \Delta_{p} w$ can be treated with the same arguments. Corollary~\ref{c:schauder blocks} (more precisely \eqref{e:schauder blocks good}) implies that
   \begin{align*}
      &\Big\lVert \sum_p \Delta_i \Big(\int_0^\cdot \Delta_p v \dd  S_{p-1} w\Big) \Big\rVert_\infty \\
      &\hspace{50pt} \le \sum_{p\le i} \sum_{q<p} \Big\lVert \Delta_i \Big(\int_0^\cdot \Delta_p v \dd  \Delta_q w \Big)\Big\rVert_\infty + \sum_{p> i} \sum_{q<p} \Big\lVert \Delta_i \Big( \int_0^\cdot \Delta_p v \dd  \Delta_q w \Big)\Big\rVert_\infty\\
      &\hspace{50pt} \le \bigg(\sum_{p\le i} \sum_{q<p} 2^{-2i + p + q} 2^{-p\alpha}\lVert v \rVert_\alpha 2^{-q\beta} \lVert w \rVert_\beta +  \sum_{p> i} \sum_{q<p} 2^{- i + q} 2^{-p\alpha} \lVert v \rVert_\alpha 2^{-q\beta} \lVert w \rVert_\beta  \bigg) \\
      &\hspace{50pt} \lesssim_{\alpha + \beta} 2^{-i(\alpha+\beta)} \lVert v \rVert_\alpha \lVert w \rVert_\beta,
   \end{align*}
   where we used $1-\alpha < 0$ and $1-\beta<0$ and for the second series we also used that $\alpha+\beta>1$.
\end{proof}

Unlike the L\'evy area $L$, the symmetric part $S$ is always well defined. It is also smooth.

\begin{lem}\label{l:symmetric part}
   Let $\alpha,\beta \in (0,1)$. Then $S$ is a bounded bilinear operator from $\C^\alpha \times \C^\beta$ to $\C^{\alpha+\beta}$.
\end{lem}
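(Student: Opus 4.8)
The plan is to mimic the proof of Lemma~\ref{l:Levy area regularity}, but now using the full strength of Corollary~\ref{c:schauder blocks}: for the symmetric part the dangerous index combination $i = q > p$ from \eqref{e:schauder blocks bad} never occurs, because $S(v,w)$ only involves diagonal terms $\int_0^\cdot \varphi_{pm}\,\dd\varphi_{pn}$ (which vanish unless $p \le 1$) and the product terms $\tfrac12 \Delta_p v \Delta_p w$. So the estimate \eqref{e:schauder blocks good}, respectively \eqref{e:schauder blocks product good}, applies with $p = q$, giving the favorable bound $2^{-(i\vee p) - i + 2p}$. First I would dispose of the low-frequency contribution $\sum_{m,n\le 1} v_{0m} w_{0n} \int_0^\cdot \varphi_{0m}\,\dd\varphi_{0n}$, which is a fixed finite linear combination of Schauder functions of level $\le 1$ with coefficients bounded by $\lVert v\rVert_\alpha \lVert w\rVert_\beta$, hence trivially in $\C^{\alpha+\beta}$ with the right norm bound. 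The heart of the matter is then the series $\tfrac12\sum_{p\ge 1} \Delta_p v \Delta_p w$.

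Next I would estimate, for fixed $i \ge 0$, the Schauder block $\Delta_i\bigl(\tfrac12\sum_{p\ge 1}\Delta_p v \Delta_p w\bigr)$. By Corollary~\ref{c:schauder blocks product} the term $\Delta_i(\Delta_p v \Delta_p w)$ vanishes when $p > i$, and for $p \le i$ (so in particular $p \vee q = p \le i$ with $q = p$, and the exceptional cases $i = q > p$, $i = p > q$ are impossible since $p = q$) we get
\begin{align*}
   \lVert \Delta_i(\Delta_p v \Delta_p w)\rVert_\infty \lesssim 2^{-i - i + 2p} \lVert \Delta_p v\rVert_\infty \lVert \Delta_p w\rVert_\infty \lesssim 2^{-2i + 2p} 2^{-p\alpha}\lVert v\rVert_\alpha 2^{-p\beta}\lVert w\rVert_\beta.
\end{align*}
Summing over $1 \le p \le i$ and using $2 - \alpha - \beta > 0$, the geometric series is dominated by its top term $p = i$, yielding
\begin{align*}
   \Bigl\lVert \Delta_i\Bigl(\tfrac12\sum_{p\ge 1}\Delta_p v \Delta_p w\Bigr)\Bigr\rVert_\infty \lesssim_{\alpha+\beta} 2^{-i(\alpha+\beta)} \lVert v\rVert_\alpha \lVert w\rVert_\beta.
\end{align*}
Combined with the trivial bound $\lVert \Delta_i(\cdot)\rVert_\infty \le \lVert\cdot\rVert_\infty$-type estimate at $i = -1$, this shows $\lVert(2^{i(\alpha+\beta)}\lVert\Delta_i S(v,w)\rVert_\infty)_i\rVert_{\ell^\infty} \lesssim \lVert v\rVert_\alpha\lVert w\rVert_\beta$, which by the characterization of $\C^{\alpha+\beta}$ recalled after the Littlewood-Paley notation is exactly the claim. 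Bilinearity is immediate from the definition of $S$.

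I do not expect a serious obstacle here; the one point requiring a little care is the bookkeeping of which case of Corollary~\ref{c:schauder blocks product} applies, i.e.\ confirming that the "bad" estimate \eqref{e:schauder blocks product bad} is genuinely never triggered because we are always on the diagonal $p = q$, so that the summable bound $2^{-2i+2p}$ (rather than an $O(1)$ bound) is available for every term. A secondary minor point is that $S(v,w)$, as a uniformly convergent series of continuous functions, is indeed continuous and hence a bona fide element of $\C^{\alpha+\beta} \subset C([0,1],\R^n)$; this follows from the $\ell^\infty$ bound on the blocks together with $\lVert\varphi_{pm}\rVert_\infty \le 1$ and the finite overlap of the supports at each level, exactly as in Lemma~\ref{l:upm hoelder}.
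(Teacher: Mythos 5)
Your proof is correct and is essentially the paper's own argument: the paper proves this lemma simply by invoking ``the same arguments as in the proof of Lemma~\ref{l:Levy area regularity}'', and that is exactly what you carry out, using the good estimate \eqref{e:schauder blocks product good} on the diagonal $p=q$ (where the exceptional cases $i=q>p$ and $i=p>q$ cannot occur) and summing the geometric series with exponent $2-\alpha-\beta>0$. One small correction: $\int_0^\cdot \varphi_{0m}(s)\,\dd \varphi_{0n}(s)$ is piecewise quadratic, not a finite linear combination of level-$\le 1$ Schauder functions; but this low-frequency contribution equals $\int_0^\cdot \Delta_0 v(s)\,\dd \Delta_0 w(s)$ and is handled either by Corollary~\ref{c:schauder blocks} with $p=q=0$ or by noting that its Schauder coefficients decay like $2^{-2i}\lesssim 2^{-i(\alpha+\beta)}$, so your conclusion for that term stands.
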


\begin{proof}
   This is shown using the same arguments as in the proof of Lemma~\ref{l:Levy area regularity}.
\end{proof}

In conclusion, the integral consists of three components. The L\'evy area $L(v,w)$ is only defined if $\alpha + \beta>1$, but then it is smooth. The symmetric part $S(v,w)$ is always defined and smooth. And the paraproduct $\pi_<(v,w)$ is always defined, but it is rougher than the other components. To summarize:

\begin{thm}[Young's integral]\label{t:young integral}
   Let $\alpha, \beta \in (0,1)$ be such that $\alpha + \beta > 1$, and let $v \in \C^\alpha$ and $w \in \C^\beta$. Then the integral
   \begin{align*}
      I(v,\dd w) := \sum_{p,q} \int_0^\cdot \Delta_p v \dd \Delta_q w = L(v,w) + S(v,w) + \pi_<(v,w) \in \C^\beta
   \end{align*}
   satisfies $\lVert I(v,\dd w) \rVert_\beta \lesssim \lVert v \rVert_\alpha \lVert w \rVert_\beta$ and
   \begin{align}\label{e:Young controlled}
      \lVert I(v,\dd w) - \pi_<(v,w) \rVert_{\alpha+\beta} \lesssim  \lVert v \rVert_\alpha \lVert w \rVert_\beta.
   \end{align}
\end{thm}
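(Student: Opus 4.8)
The plan is to assemble Theorem~\ref{t:young integral} directly from the three structural estimates already established for the components of the integral. Recall the decomposition
$$ I(v,\dd w) = \sum_{p,q} \int_0^\cdot \Delta_p v\, \dd \Delta_q w = L(v,w) + S(v,w) + \pi_<(v,w), $$
which was derived above purely algebraically via integration by parts; so the work is entirely in the regularity bookkeeping.

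First I would record the two ``smooth'' bounds: Lemma~\ref{l:Levy area regularity} gives $\lVert L(v,w)\rVert_{\alpha+\beta} \lesssim \lVert v\rVert_\alpha \lVert w\rVert_\beta$ (this is where $\alpha+\beta>1$ is used), and Lemma~\ref{l:symmetric part} gives $\lVert S(v,w)\rVert_{\alpha+\beta}\lesssim \lVert v\rVert_\alpha\lVert w\rVert_\beta$. Adding these yields $\lVert L(v,w)+S(v,w)\rVert_{\alpha+\beta}\lesssim \lVert v\rVert_\alpha\lVert w\rVert_\beta$, which is precisely the bound \eqref{e:Young controlled} once we note $L(v,w)+S(v,w) = I(v,\dd w) - \pi_<(v,w)$. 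For the other assertion, Lemma~\ref{l:paraproduct definition} gives $\pi_<(v,w)\in\C^\beta$ with $\lVert \pi_<(v,w)\rVert_\beta \lesssim \lVert v\rVert_\infty\lVert w\rVert_\beta \lesssim \lVert v\rVert_\alpha\lVert w\rVert_\beta$; combined with the continuous embedding $\C^{\alpha+\beta}\hookrightarrow\C^\beta$ (which holds because $2^{p\beta}|f_{pm}| \le 2^{p(\alpha+\beta)}|f_{pm}|$ for $p\ge 0$, and the $p=-1$ term is controlled by $\lVert f\rVert_\infty$, hence by $\lVert f\rVert_{\alpha+\beta}$), we get $I(v,\dd w)\in\C^\beta$ with $\lVert I(v,\dd w)\rVert_\beta \lesssim \lVert v\rVert_\alpha\lVert w\rVert_\beta$.

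The one genuine point to verify is that the formal double series actually converges to the claimed object, i.e.\ that writing $I(v,\dd w)=\sum_{p,q}\int_0^\cdot\Delta_p v\,\dd\Delta_q w$ is legitimate. For this I would argue that the partial sums over $p,q\le N$ form a Cauchy sequence in $\C^\beta$ (or at least converge uniformly): the contributions split into the $p<q$, $p=q$, and $p>q$ families, and the estimates behind Lemmas~\ref{l:Levy area regularity}, \ref{l:symmetric part}, and \ref{l:paraproduct definition} show each tail is summable — for the $\pi_<$ part one uses that $\sum_p S_{p-1}v\,\Delta_p w$ converges in $\C^\beta$ since $\lVert \Delta_p w\rVert_\infty\lesssim 2^{-p\beta}\lVert w\rVert_\beta$ and $\lVert S_{p-1}v\rVert_\infty\lesssim \lVert v\rVert_\infty$. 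Since $\C^\beta\subset C([0,1])$ and the uniform limit agrees with the pathwise Young integral on smooth integrands (by density and the characterizing estimate recalled in Section~\ref{s:rough paths}), the identification is complete.

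The main obstacle — such as it is — is purely notational: making sure the rearrangement of the absolutely-convergent-in-$\C^\beta$ family into $L+S+\pi_<$ is justified, so that one may freely regroup $\sum_{p<q}$ via integration by parts as done in the text preceding the theorem. Everything else is a one-line invocation of the preceding lemmas plus the elementary embedding $\C^{\alpha+\beta}\hookrightarrow\C^\beta$; there is no new estimate to prove here, and the hypothesis $\alpha+\beta>1$ enters only through Lemma~\ref{l:Levy area regularity}.
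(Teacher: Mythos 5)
Your proposal is correct and follows essentially the same route as the paper, which states Theorem~\ref{t:young integral} as a direct summary of Lemma~\ref{l:paraproduct definition}, Lemma~\ref{l:Levy area regularity}, and Lemma~\ref{l:symmetric part} applied to the decomposition $I(v,\dd w)=L(v,w)+S(v,w)+\pi_<(v,w)$ obtained by integration by parts. Your additional remarks on the convergence of the partial sums, the bound $\lVert v\rVert_\infty\lesssim\lVert v\rVert_\alpha$, and the embedding $\C^{\alpha+\beta}\hookrightarrow\C^\beta$ are accurate and simply make explicit what the paper leaves implicit.
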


\subsubsection*{L\'evy area and dyadic martingales}

Here we show that the L\'evy area $L(v,w)(1)$ can be expressed in terms of the L\'evy area of suitable dyadic martingales. To simplify notation, we assume that $v(0) = w(0) = 0$, so that we do not have to bother with the components $v_{-10}$ and $w_{-10}$.

We define a filtration $(\F_n)_{n \ge 0}$ on $[0,1]$ by setting
\begin{align*}
   \F_n = \sigma(\chi_{p m}: 0 \le p \le n, 0 \le m \le 2^p),
\end{align*}
we set $\F = \bigvee_n \F_n$, and we consider the Lebesgue measure on $([0,1], \F)$. On this space, the process $M_n = \sum_{p=0}^n \sum_{m=0}^{2^p} \chi_{pm}$, $n \in \N$, is a martingale. For any continuous function $v:[0,1] \rightarrow \R$ with $v(0) = 0$, the process
\begin{align*}
   M^v_n = \sum_{p=0}^n \sum_{m=0}^{2^p} \langle 2^{-p} \chi_{pm}, \dd v\rangle \chi_{pm} = \sum_{p=0}^n \sum_{m=0}^{2^p} v_{pm} \chi_{pm} = \partial_t S_n v,
\end{align*}
$n \in \N$, is a martingale transform of $M$, and therefore a martingale as well. Since it will be convenient later, we also define $\F_{-1} = \{\emptyset, [0,1]\}$ and $M_{-1}^v = 0$ for every $v$.

Assume now that $v$ and $w$ are continuous real-valued functions with $v(0) = w(0) = 0$, and that the L\'evy area $L(v,w)(1)$ exists. Then it is given by
\begin{align*}
   L(v,w)(1) & = \sum_{p=0}^\infty \sum_{q=0}^{p-1} \sum_{m,n} (v_{pm} w_{qn} - v_{qn} w_{pm}) \int_0^1 \varphi_{pm}(s) \chi_{qn}(s) \dd s \\
   & = \sum_{p=0}^\infty \sum_{q=0}^{p-1} \sum_{m,n} (v_{pm} w_{qn} - v_{qn} w_{pm}) 2^p \int_0^1 \chi_{qn}(s) 1_{[t^0_{pm}, t^2_{pm})}(s) \dd s \langle \varphi_{pm},1 \rangle \\
   & = \sum_{p=0}^\infty \sum_{q=0}^{p-1} \sum_{m,n} (v_{pm} w_{qn} - v_{qn} w_{pm}) 2^{-p} \int_0^1 \chi_{qn}(s) \chi_{pm}^2(s) \dd s 2^{-p-2}\\
   & = \sum_{p=0}^\infty \sum_{q=0}^{p-1} 2^{-2p-2} \int_0^1 \sum_{m,n} \sum_{m'} (v_{pm} w_{qn} - v_{qn} w_{pm}) \chi_{qn}(s) \chi_{pm}(s) \chi_{pm'}(s) \dd s,
\end{align*}
where 
in the last step we used that $\chi_{pm}$ and $\chi_{pm'}$ have disjoint support for $m \neq m'$. The $p$--th \emph{Rademacher function} (or \emph{square wave}) is defined for $p \ge 1$ as
\begin{align*}
   r_p(t) := \sum_{m'=1}^{2^p} 2^{-p} \chi_{pm'}(t).
\end{align*}
The martingale associated to the Rademacher functions is given by $R_0 := 0$ and $R_p := \sum_{k=1}^p r_k$ for $p \ge 1$. Let us write $\Delta M^v_p = M^v_p - M^v_{p-1}$ and similarly for $M^w$ and $R$ and all other discrete time processes that arise. This notation somewhat clashes with the expression $\Delta_p v$ for the dyadic blocks of $v$, but we will only use it in the following lines, where we do not directly work with dyadic blocks. The quadratic covariation of two dyadic martingales is defined as $[M, N]_n := \sum_{k=0}^n \Delta M_k \Delta N_k$, and the discrete time stochastic integral is defined as $(M\cdot N)_n := \sum_{k=0}^n M_{k-1} \Delta N_k$. Writing $E(\cdot)$ for the integral $\int_0^1 \cdot \dd s$, we obtain
\begin{align*}
   L(v,w)(1) & = \sum_{p=0}^\infty \sum_{q=0}^{p-1} 2^{-p-2} E\left(\Delta M^v_p \Delta M^w_q \Delta R_p - \Delta M^v_q \Delta M^w_p \Delta R_p \right) \\
   & = \sum_{p=0}^\infty 2^{-p-2} E\left(\left(M^w_{p-1} \Delta M^v_p - M^v_{p-1} \Delta M^w_p \right) \Delta R_p \right)\\
   & = \sum_{p=0}^\infty 2^{-p-2} E\left(\Delta \left[M^w \cdot M^v - M^v \cdot M^w, R\right]_p \right).
\end{align*}
Hence, $L(v,w)(1)$ is closely related to the L\'evy area $1/2(M^w \cdot M^v - M^v \cdot M^w)$ of the dyadic martingale $(M^v, M^w)$.

\section{Paracontrolled paths and pathwise integration beyond Young}\label{s:schauder rough path integral}

In this section we construct a rough path integral in terms of Schauder functions.

\subsection{Paracontrolled paths}

We observed in Section~\ref{s:paradifferential calculus} that for $w \in \C^\alpha$ and $F \in C^{1+\beta/\alpha}_b$ we have $F(w) - \pi_<(\dD F(w),w) \in \C^{\alpha+\beta}$. In Section~\ref{s:young} we observed that if $v \in \C^\alpha$, $w\in \C^\beta$ and $\alpha + \beta > 1$, then the Young integral $I(v,\dd w)$ satisfies $I(v,\dd w) - \pi_<(v,w) \in \C^{\alpha + \beta}$. Hence, in both cases the function under consideration can be written as $\pi_<(f^w,w)$ for a suitable $f^w$, plus a smooth remainder. We make this our definition of paracontrolled paths:
\begin{defn}
   Let $\alpha > 0$ and $v \in \C^\alpha(\R^d)$. For $\beta \in (0,\alpha]$ we define
   \[
      \mathcal{D}^{\beta}_v := \D^\beta_v(\R^n) := \left\{(f,f^v) \in \C^\alpha(\R^n) \times \C^\beta(\L(\R^d,\R^n)): f^\sharp = f - \pi_<(f^v,v)\in \C^{\alpha+\beta}(\R^n)\right\}.
   \]
   If $(f,f^v) \in \D^\beta_v$, then $f$ is called \emph{paracontrolled} by $v$. The function $f^v$ is called the \emph{derivative} of $f$ with respect to $v$. Abusing notation, we write $f \in \D^\beta_v$ when it is clear from the context what the derivative $f^v$ is supposed to be. We equip $\D^\beta_v$ with the norm
   \begin{align*}
      \lVert f \rVert_{v,\beta} := \lVert f^v \rVert_\beta + \lVert f^\sharp\rVert_{\alpha+\beta}.
   \end{align*}
   If $v \in \C^\alpha$ and $(\tilde f, \tilde f^{\tilde v}) \in \D^\beta_{\tilde v}$, then we also write
   \[
      d_{\D^\beta}(f,\tilde f) := \lVert f^v - \tilde f^{\tilde v} \rVert_\beta + \lVert f^\sharp - \tilde f^\sharp \rVert_{\alpha+\beta}.
   \]
\end{defn}


\begin{ex}
   Let $\alpha \in (0,1)$ and $v \in \C^\alpha$. Then Proposition~\ref{p:paralinearization} shows that $F(v) \in \D^\beta_v$ for every $F \in C^{1+\beta/\alpha}_b$, with derivative $\dD F(v)$.
\end{ex}

\begin{ex}
   Let $\alpha + \beta >1$ and $v\in \C^\alpha$, $w \in \C^\beta$. Then by~\eqref{e:Young controlled}, the Young integral $I(v,\dd w)$ is in $\D^\alpha_w$, with derivative $v$.
\end{ex}

\begin{ex}\label{ex:controlled old vs new}
   If $\alpha + \beta < 1$ and $v \in \C^\alpha$, then $(f,f^v) \in \D^\beta_v$ if and only if $|f_{s,t} - f^v_s v_{s,t}| \lesssim |t-s|^{\alpha+\beta}$ and in that case
   \[
      \lVert f^v\rVert_\infty + \sup_{s\neq t} \frac{| f^v_{s,t} |}{|t-s|^\beta} + \sup_{s \neq t}\frac{|f_{s,t} - f^v_s v_{s,t}|}{|t-s|^{\alpha+\beta}} \lesssim \|f\|_{v,\beta}(1+\lVert v \rVert_\alpha).
   \]
   Indeed we have $|f^v_s v_{s,t} - \pi_<(f^v,v)_{s,t}| \lesssim |t-s|^{\alpha+\beta} \lVert f^v\rVert_\beta \lVert v \rVert_\alpha$, which can be shown using similar arguments as for Lemma~B.2 in~\cite{Gubinelli2012}.
   In other words, for $\alpha \in (0,1/2)$ the space $\D^\alpha_v$ coincides with the space of controlled paths defined in Section~\ref{s:rough paths}.
\end{ex}

The following commutator estimate, the analog of Theorem~2.3 of~\cite{Bony1981} in our setting, will be useful for establishing some stability properties of~$\D^\beta_v$.

\begin{lem}\label{l:commutator 2}
   Let $\alpha, \beta \in (0,1)$, and let $u\in C([0,1],\L(\R^n;\R^m))$, $v\in \C^\alpha(\L(\R^d;\R^n))$, and $w \in \C^\beta(\R^d)$. Then
   \begin{align*}
      \lVert \pi_<(u, \pi_<(v,w)) - \pi_<(uv, w)\rVert_{\alpha + \beta} \lesssim \lVert u\rVert_\infty \lVert v \rVert_\alpha \lVert w \rVert_\beta.
   \end{align*}
\end{lem}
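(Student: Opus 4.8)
Following the strategy behind Bony's commutator theorem (Theorem~2.3 of~\cite{Bony1981}), the plan is to reduce everything to the regularity estimate for Schauder series with affine coefficients, Lemma~\ref{l:upm hoelder}. Throughout I use the representation from the proof of Lemma~\ref{l:paraproduct definition}: for $a\in C([0,1])$ and $g\in\C^\beta$ one has $\pi_<(a,g)=\sum_{p,m}(S_{p-1}a)|_{[t^0_{pm},t^2_{pm}]}\,g_{pm}\,\varphi_{pm}$, where $(S_{p-1}a)|_{[t^0_{pm},t^2_{pm}]}$ is the affine interpolation of $a$ between $t^0_{pm}$ and $t^2_{pm}$ (recall $S_{p-1}a$ interpolates $a$ at all multiples of $2^{-p}$), and that for $\alpha\in(0,1)$ the norm $\lVert a\rVert_\alpha$ dominates, up to a constant, the classical H\"older seminorm by Lemma~\ref{l:ciesielski}, so that oscillations of $a$ and of its interpolants over an interval of length $2^{-p}$ are $\lesssim 2^{-p\alpha}\lVert a\rVert_\alpha$. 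The first step is to \emph{linearise the inner paraproduct}: I replace $\pi_<(v,w)$ by the \emph{constant}-coefficient series $g:=\sum_{p,m}S_{p-1}v(t^1_{pm})\,w_{pm}\,\varphi_{pm}$. The affine coefficients of $\pi_<(v,w)-g$ equal $[(S_{p-1}v)|_{[t^0_{pm},t^2_{pm}]}-S_{p-1}v(t^1_{pm})]\,w_{pm}$, whose supremum over $[t^0_{pm},t^2_{pm}]$ is at most the oscillation of $S_{p-1}v$ there times $|w_{pm}|$, hence $\lesssim 2^{-p\alpha}\lVert v\rVert_\alpha\,2^{-p\beta}\lVert w\rVert_\beta$. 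By Lemma~\ref{l:upm hoelder} (with exponent $\alpha+\beta\in(0,2)$) this gives $r:=\pi_<(v,w)-g\in\C^{\alpha+\beta}$ with $\lVert r\rVert_{\alpha+\beta}\lesssim\lVert v\rVert_\alpha\lVert w\rVert_\beta$, so by Lemma~\ref{l:paraproduct definition} also $\pi_<(u,r)\in\C^{\alpha+\beta}$ with $\lVert\pi_<(u,r)\rVert_{\alpha+\beta}\lesssim\lVert u\rVert_\infty\lVert v\rVert_\alpha\lVert w\rVert_\beta$. It therefore remains to estimate $\pi_<(u,g)-\pi_<(uv,w)$.

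Since $g$ is written directly in the Schauder basis, its blocks are $\Delta_p g=\sum_m S_{p-1}v(t^1_{pm})\,w_{pm}\,\varphi_{pm}$; this uses only that the second-difference functionals defining $\Delta_p$ are biorthogonal to the family $(\varphi_{qn})$, which is a short direct check. Hence $\pi_<(u,g)=\sum_{p,m}(S_{p-1}u)|_{[t^0_{pm},t^2_{pm}]}\,S_{p-1}v(t^1_{pm})\,w_{pm}\,\varphi_{pm}$, while $\pi_<(uv,w)=\sum_{p,m}(S_{p-1}(uv))|_{[t^0_{pm},t^2_{pm}]}\,w_{pm}\,\varphi_{pm}$. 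Writing $[a,b]:=[t^0_{pm},t^2_{pm}]$, letting $\ell_u,\ell_v,\ell_{uv}$ be the affine interpolations of $u,v,uv$ on $[a,b]$, and $c:=\ell_v(t^1_{pm})$, the coefficient of the difference at $(p,m)$ is $(\ell_u c-\ell_{uv})\,w_{pm}$, which is again \emph{affine} because $c$ is a scalar constant. I bound it through $\ell_u c-\ell_{uv}=\ell_u(c-\ell_v)+(\ell_u\ell_v-\ell_{uv})$: the first term has supremum $\lesssim\lVert u\rVert_\infty|v_{a,b}|\lesssim\lVert u\rVert_\infty 2^{-p\alpha}\lVert v\rVert_\alpha$ (oscillation of $\ell_v$ on $[a,b]$), and the second is a quadratic that vanishes at $a$ and $b$ — since $\ell_u(a)\ell_v(a)=u(a)v(a)=\ell_{uv}(a)$, and likewise at $b$ — and equals $-\tfrac14 u_{a,b}v_{a,b}$ at the midpoint, so its supremum on $[a,b]$ is $\tfrac14|u_{a,b}||v_{a,b}|\lesssim\lVert u\rVert_\infty 2^{-p\alpha}\lVert v\rVert_\alpha$. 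Multiplying by $|w_{pm}|\le 2^{-p\beta}\lVert w\rVert_\beta$ puts the affine coefficients of $\pi_<(u,g)-\pi_<(uv,w)$ into $\A^{\alpha+\beta}$ with norm $\lesssim\lVert u\rVert_\infty\lVert v\rVert_\alpha\lVert w\rVert_\beta$, and a final application of Lemma~\ref{l:upm hoelder} completes the proof; the coarse terms $p\in\{-1,0\}$ are handled by hand, every quantity there being bounded by $\lVert u\rVert_\infty\lVert v\rVert_\infty\lVert w\rVert_\infty$.

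I expect the only real obstacle to be the following: the naive coefficient of $\pi_<(u,\pi_<(v,w))-\pi_<(uv,w)$ at level $p$ is the genuinely \emph{quadratic} function $(\ell_u\ell_v-\ell_{uv})\,w_{pm}$ — a product of two affine interpolations minus one — whereas Lemma~\ref{l:upm hoelder} only accepts affine coefficients. The linearisation of the first step is exactly what removes this difficulty: trading $\pi_<(v,w)$ for the constant-coefficient series $g$, at the cost of a harmless $\C^{\alpha+\beta}$ remainder, forces the outer paraproduct to produce affine coefficients $\ell_u\cdot(\text{const})$, and the truly quadratic discrepancy $\ell_u\ell_v-\ell_{uv}$ then appears only inside the elementary estimate of the second step, where it is controlled by the fact that it vanishes at both endpoints of each dyadic interval. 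Besides this, one merely has to verify the biorthogonality relation used for $\Delta_p g$ and to keep track of the finitely many low-frequency terms, both of which are routine.
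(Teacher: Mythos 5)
Your argument is correct, and it differs from the paper's proof in one substantive point, so a comparison is worthwhile. The paper also reduces the whole statement to Lemma~\ref{l:upm hoelder}, but it does so by writing the difference directly as $\sum_{p,m}\bigl(S_{p-1}u\,(\pi_<(v,w))_{pm}-S_{p-1}(uv)\,w_{pm}\bigr)\varphi_{pm}$: since the outer paraproduct is defined through the genuine Schauder blocks of $\pi_<(v,w)$, its coefficients on each $[t^0_{pm},t^2_{pm}]$ are automatically affine (the interpolation of $u$ times the \emph{scalar} coefficient $(\pi_<(v,w))_{pm}$), so the ``quadratic coefficient'' obstacle you single out as the main difficulty never appears in the paper's treatment — it would only arise if one confused the affine-coefficient representation of $\pi_<(v,w)$ with its Schauder expansion. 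The paper's actual work goes into estimating $(\pi_<(v,w))_{pm}=\sum_q(S_{q-1}v\,\Delta_q w)_{pm}$: the terms with $q<p$ are bounded by $2^{-2p+q(2-\alpha-\beta)}\lVert v\rVert_\alpha\lVert w\rVert_\beta$ via Lemma~\ref{l:schauder coefficients of iterated integrals}, the terms with $q>p$ vanish, and the diagonal term equals $\tfrac12(v(t^0_{pm})+v(t^2_{pm}))w_{pm}$, which is then compared with $S_{p-1}(uv)\,w_{pm}$ by rebracketing — exactly your endpoint/midpoint computation for $\ell_u\ell_v-\ell_{uv}$, since your $g_{pm}=S_{p-1}v(t^1_{pm})w_{pm}$ is precisely this diagonal contribution. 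Your detour through the constant-coefficient series $g$ thus replaces the paper's use of Lemma~\ref{l:schauder coefficients of iterated integrals}: you absorb the off-diagonal part into $r=\pi_<(v,w)-g$, put it in $\C^{\alpha+\beta}$ by Lemma~\ref{l:upm hoelder}, and then invoke the boundedness of $\pi_<(u,\cdot)$ on $\C^{\alpha+\beta}$ from Lemma~\ref{l:paraproduct definition}, at the modest extra cost of checking the biorthogonality $(\varphi_{qn})_{pm}=\delta_{pq}\delta_{mn}$ and the low-frequency terms. Both routes are sound and of comparable length; yours is somewhat more modular in that it never touches the iterated-integral coefficient estimates, while the paper's is more direct and makes clear that no linearisation step is needed in the first place. (One cosmetic slip: your $c=\ell_v(t^1_{pm})$ is a constant matrix rather than a scalar, but $\ell_u\,c$ is of course still affine, and all your estimates respect the order of the matrix products, so nothing breaks.)
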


\begin{proof}
   We have
   \begin{align*}
      \pi_<(u, \pi_<(v,w)) - \pi_<(uv, w) & = \sum_{p,m} (S_{p-1}u (\pi_<(v,w))_{pm} - S_{p-1}(uv) w_{pm})\varphi_{pm} 
   \end{align*}
   and $[S_{p-1}u (\pi_<(v,w))_{pm} - S_{p-1}(uv) w_{pm}]|_{[t^0_{pm},t^2_{pm}]}$ is affine. By Lemma \ref{l:upm hoelder} it suffices to control $\lVert[S_{p-1}u (\pi_<(v,w))_{pm} - S_{p-1}(uv) w_{pm}]|_{[t^0_{pm},t^2_{pm}]}\rVert_\infty$.


   The cases $(p,m) = (-1,0)$ and $(p,m) = (0,0)$ are easy, so let $p \ge 0$ and $m \ge 1$. For $r<q<p$ we denote by $m_q$ and $m_r$ the unique index in generation $q$ and $r$ respectively for which $\chi_{pm} \varphi_{q m_q} \not \equiv 0$ and similarly for $r$. We apply Lemma \ref{l:schauder coefficients of iterated integrals} to obtain for $q<p$
   \begin{align*}
      |(S_{q-1}v \Delta_q w)_{pm}| & = \Big|\sum_{r<q} v_{rm_r} w_{qm_q} 2^{-p} \langle \chi_{pm}, \dd(\varphi_{rm_r} \varphi_{qm_q})\rangle \Big|\\
      & = \Big|\sum_{r<q} v_{rm_r} w_{qm_q} 2^{-p} \langle \chi_{pm}, \chi_{rm_r} \varphi_{qm_q} + \varphi_{rm_r} \chi_{qm_q} \rangle\Big| \\
      & \le \lVert v \rVert_\alpha \lVert w \rVert_\beta \sum_{r<q} 2^{-r\alpha} 2^{-q\beta} 2^{-p}2^{-2p+r+p+q} \lesssim 2^{-2p+q(2-\alpha-\beta)} \lVert v \rVert_\alpha \lVert w \rVert_\beta.
   \end{align*}
   Hence
   \[
      \Big\lVert \Big( S_{p-1}u \sum_{q<p} (S_{q-1} v \Delta_q w)_{pm} \Big) \Big|_{[t^0_{pm},t^2_{pm}]}\Big\rVert_{\infty} \lesssim \lVert u \rVert_\infty \lVert v \rVert_\alpha \lVert w \rVert_\beta 2^{-p(\alpha+\beta)}.
   \]
   If $p<q$, then $\Delta_q w(t^k_{pm}) = 0$ for all $k$ and $m$, and therefore $(S_{q-1}v \Delta_q w)_{pm}=0$, so that it only remains to bound $\|[S_{p-1}u (S_{p-1}v \Delta_p w)_{pm} - S_{p-1}(uv) w_{pm}]|_{t^0_{pm}, t^2_{pm}]}\|_\infty$. We have $\Delta_p w(t^0_{pm}) = \Delta_p w(t^2_{pm})=0$ and $\Delta_p w(t^1_{pm}) = w_{pm}/2$. On $[t^0_{pm}, t^2_{pm}]$, the function $S_{p-1} v$ is given by the linear interpolation of $v(t^0_{pm})$ and $v(t^2_{pm})$, and therefore $(S_{p-1}v \Delta_p w)_{pm} = \frac{1}{2}(v(t^0_{pm}) + v(t^2_{pm}))w_{pm}$, leading to
   \begin{align*}
      &\lVert [S_{p-1}u (S_{p-1}v \Delta_p w)_{pm} - S_{p-1}(uv) w_{pm}]|_{[t^0_{pm},t^2_{pm}]}\rVert_{\infty}\\
      &\hspace{80pt} \le |w_{pm}|\times  \Big\lVert\Big[ \Big(u(t^0_{pm})+\frac{\cdot-t^0_{pm}}{t^2_{pm}-t^0_{pm}}u_{t^0_{pm},t^2_{pm}}\Big)\frac{v(t^0_{pm}) + v(t^2_{pm})}{2} \\
      &\hspace{160pt} - \Big((uv)(t^0_{pm}) + \frac{\cdot-t^0_{pm}}{t^2_{pm}-t^0_{pm}}(uv)_{t^0_{pm},t^2_{pm}}\Big)\Big]\Big|_{[t^0_{pm},t^2_{pm}]}\Big\rVert_{\infty}  \\
      &\hspace{80pt} \lesssim \lVert u\rVert_{\infty} \lVert v \rVert_\alpha \lVert w \rVert_\beta 2^{-p(\alpha+\beta)},
   \end{align*}
   where the last step follows by rebracketing.
\end{proof}

As a consequence, we can show that paracontrolled paths are stable under the application of smooth functions.

\begin{cor}\label{c:controlled under smooth}
   Let $\alpha \in (0,1)$, $\beta \in (0,\alpha]$, $v \in \C^\alpha$, and $f \in \D^\beta_v$ with derivative $f^v$. Let $F \in C^{1+\beta/\alpha}_b$. Then $F(f) \in \D^\beta_v$ with derivative $\dD F(f) f^v$, and
   \begin{align*}
      \lVert F(f)\rVert_{v,\beta} \lesssim \lVert F\rVert_{C^{1+\beta/\alpha}_b} (1 + \lVert v \rVert_\alpha)^{1+\beta/\alpha} (1+\lVert f \rVert_{v,\beta}) (1 + \lVert f^v \rVert_\infty)^{1+\beta/\alpha}.
   \end{align*}
   Moreover, there exists a polynomial $P$ which satisfies for all $F \in C^{2+\beta/\alpha}_b$, $\tilde v \in \C^\alpha$, $\tilde f \in \D^\beta_{\tilde v}$, and
   \[
      M = \max\{\lVert v \rVert_\alpha, \lVert \tilde v \rVert_\alpha, \lVert f \rVert_{v,\beta}, \lVert \tilde f \rVert_{\tilde v,\beta}\}
   \]
   the bound
   \[
      d_{\D^\beta}(F(f), F(\tilde f)) \le P(M) \lVert F\rVert_{C^{2+\beta/\alpha}_b}(d_{\D^\beta}(f,\tilde f) + \lVert u - \tilde u\rVert_\alpha).
   \]
\end{cor}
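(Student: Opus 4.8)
The plan is to reduce the statement for $F(f)$ to the paralinearization result (Proposition~\ref{p:paralinearization}) combined with the commutator estimate (Lemma~\ref{l:commutator 2}), exploiting the decomposition $f = \pi_<(f^v,v) + f^\sharp$ with $f^v \in \C^\beta$ and $f^\sharp \in \C^{\alpha+\beta}$. First I would write $F(f) = \pi_<(\dD F(f), f) + \big(F(f) - \pi_<(\dD F(f), f)\big)$. Since $f = v_{\mathrm{rem}} + w$ is not literally of the form "$v\in\C^\alpha$ plus $w\in\C^{\alpha+\beta}$" with $v$ a Schauder-type path, I would instead apply Proposition~\ref{p:paralinearization} to the pair $(\pi_<(f^v,v), f^\sharp)$: note $\pi_<(f^v,v) \in \C^\alpha$ by Lemma~\ref{l:paraproduct definition} and $f^\sharp \in \C^{\alpha+\beta}$, so the proposition gives
\[
   \big\lVert F(f) - \pi_<\big(\dD F(f), \pi_<(f^v,v)\big) \big\rVert_{\alpha+\beta} \lesssim \lVert F\rVert_{C^{1+\beta/\alpha}_b}(1+\lVert \pi_<(f^v,v)\rVert_\alpha)^{1+\beta/\alpha}(1+\lVert f^\sharp\rVert_{\alpha+\beta}),
\]
and then I would estimate $\lVert \pi_<(f^v,v)\rVert_\alpha \lesssim \lVert f^v\rVert_\infty \lVert v\rVert_\alpha$ (again Lemma~\ref{l:paraproduct definition}, with $\beta$ there replaced by $\alpha$).

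Next I would rewrite the paraproduct $\pi_<(\dD F(f), \pi_<(f^v, v))$ using Lemma~\ref{l:commutator 2} with $u = \dD F(f)$, which is bounded since $F \in C^1_b$:
\[
   \big\lVert \pi_<\big(\dD F(f), \pi_<(f^v,v)\big) - \pi_<\big(\dD F(f)\, f^v,\, v\big) \big\rVert_{\alpha+\beta} \lesssim \lVert \dD F(f)\rVert_\infty \lVert f^v\rVert_\alpha \lVert v\rVert_\alpha.
\]
Chaining these two estimates shows $F(f) - \pi_<(\dD F(f) f^v, v) \in \C^{\alpha+\beta}$, i.e. $F(f)^\sharp = F(f) - \pi_<(\dD F(f)f^v, v)$ has the claimed regularity, so $F(f) \in \D^\beta_v$ with derivative $\dD F(f)f^v$. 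It remains to control the two pieces of the $\D^\beta_v$-norm: the bound on $F(f)^\sharp$ in $\C^{\alpha+\beta}$ comes from the two displayed estimates above (after replacing $\lVert f^\sharp\rVert_{\alpha+\beta}$ and $\lVert f^v\rVert_{\beta\text{ or }\alpha}$ by $\lVert f\rVert_{v,\beta}$ and using $\lVert f^v\rVert_\alpha \le \lVert f^v\rVert_\infty^{1-\beta/\alpha}\lVert f^v\rVert_\beta^{\beta/\alpha}$-type interpolation or simply $\beta \le \alpha$ so that we just need $\lVert f^v\rVert_\beta$ — actually one must be a little careful here, see below), while the bound on $\dD F(f)f^v$ in $\C^\beta$ follows by writing $\dD F(f) \in \C^\beta$ via a chain-rule estimate $\lVert \dD F(f)\rVert_\beta \lesssim \lVert F\rVert_{C^{1+\beta/\alpha}_b}(1+\lVert f\rVert_\alpha)^{\beta/\alpha}$ (a routine variant of the Taylor argument in Proposition~\ref{p:paralinearization}) and using that $\C^\beta$ is an algebra.

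The local Lipschitz estimate is proved the same way, now starting from the Lipschitz part~\eqref{e:paralinearization lipschitz} of Proposition~\ref{p:paralinearization} applied to the pairs $(\pi_<(f^v,v), f^\sharp)$ and $(\pi_<(\tilde f^{\tilde v}, \tilde v), \tilde f^\sharp)$, subtracting, and controlling the difference $\pi_<(f^v,v) - \pi_<(\tilde f^{\tilde v},\tilde v)$ in $\C^\alpha$ by $\lVert f^v - \tilde f^{\tilde v}\rVert_\alpha \lVert v\rVert_\alpha + \lVert \tilde f^{\tilde v}\rVert_\alpha \lVert v - \tilde v\rVert_\alpha$ (bilinearity of $\pi_<$); then one uses bilinearity of $\pi_<$ and Lemma~\ref{l:commutator 2} again on the difference of commutator terms, and the locally Lipschitz dependence of $\dD F(f)f^v$ on $(f, f^v)$. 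All constants are polynomial in $M$, so collecting them yields the polynomial $P$.

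The main obstacle I anticipate is bookkeeping rather than conceptual: Proposition~\ref{p:paralinearization} as stated produces a bound in terms of $\lVert v\rVert_\alpha$ and $\lVert w\rVert_{\alpha+\beta}$ with a specific power $1+\beta/\alpha$, and here the role of "$v$" is played by $\pi_<(f^v,v)$ whose $\C^\alpha$-norm is $\lesssim \lVert f^v\rVert_\infty \lVert v\rVert_\alpha$ — so one gets $(1+\lVert f^v\rVert_\infty\lVert v\rVert_\alpha)^{1+\beta/\alpha}$, which must be massaged into the target form $(1+\lVert v\rVert_\alpha)^{1+\beta/\alpha}(1+\lVert f^v\rVert_\infty)^{1+\beta/\alpha}(1+\lVert f\rVert_{v,\beta})$; this is elementary since $(1+ab)^c \le (1+a)^c(1+b)^c$. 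A second point of care is that Proposition~\ref{p:paralinearization} is stated for $F \in C^{1+\beta/\alpha}_b$ applied to $v + w$ with $v\in\C^\alpha$, $w\in\C^{\alpha+\beta}$, and $\alpha\in(0,1/2)$; one should double-check that the hypotheses $\alpha\in(0,1)$ here are compatible, or restrict to the regime actually used later (the construction of the integral below only needs $\alpha < 1/2$), but in any case the argument only ever invokes results already available in the excerpt. The chain-rule estimate $\lVert \dD F(f)\rVert_\beta \lesssim_{F}(1+\lVert f\rVert_\alpha)^{\beta/\alpha}$ is the only genuinely new auxiliary computation, and it is a direct repeat of the Taylor expansion in the proof of Proposition~\ref{p:paralinearization}.
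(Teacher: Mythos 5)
Your proposal is correct and follows the paper's own proof essentially verbatim: you control the remainder $F(f)-\pi_<(\dD F(f)f^v,v)$ by Proposition~\ref{p:paralinearization} applied around $\pi_<(f^v,v)$ with perturbation $f^\sharp$ together with the commutator Lemma~\ref{l:commutator 2}, bound the derivative term $\dD F(f)f^v$ by a routine chain-rule/product estimate, and treat the Lipschitz statement by the same scheme via \eqref{e:paralinearization lipschitz}, exactly as the paper does. The only point to watch in your "algebra" step is to use the Leibniz-form product bound $\lVert gh\rVert_\beta \lesssim \lVert g\rVert_\infty\lVert h\rVert_\beta + \lVert g\rVert_\beta\lVert h\rVert_\infty$ with $g=\dD F(f)$, $h=f^v$, so that $\lVert f\rVert_{v,\beta}$ enters only linearly, as the stated bound (and its later use for the SDE fixed point) requires.
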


\begin{proof}
   The estimate for $\|\dD F(f) f^v\|_\beta$ is straightforward. For the remainder we apply Proposition~\ref{p:paralinearization} and Lemma~\ref{l:commutator 2} to obtain
   \begin{align*}
      \|F(f)^\sharp\|_{\alpha+\beta} & \le \|F(f) - \pi_<(\dD F(f), f)\|_{\alpha+\beta} + \|\pi_<(\dD F(f), f^\sharp)\|_{\alpha+\beta} \\
      &\quad + \|\pi_<(\dD F(f), \pi_<(f^v,v)) - \pi_<(\dD F(f) f^v, v) \|_{\alpha+\beta} \\
      &\lesssim \lVert F \rVert_{C^{1+\beta/\alpha}_b} (1 + \lVert \pi_<(f^v,v)\rVert_\alpha)^{1+\beta/\alpha}(1 + \lVert f^\sharp \rVert_{\alpha+\beta})  \\
      &\quad + \lVert F \rVert_{C^1_b} \lVert f \rVert_{v,\beta} + \lVert F \rVert_{C^1_b} \lVert f^v \rVert_{\beta} \lVert v \rVert_\alpha \\
      &\lesssim \lVert F\rVert_{C^{1+\beta/\alpha}_b} (1+\lVert f^v \rVert_{\infty})^{1+\beta/\alpha} (1 + \lVert v \rVert_\alpha)^{1+\beta/\alpha}(1+\lVert f \rVert_{v,\beta}).
   \end{align*}
   The difference $F(f) - F(\tilde f)$ is treated in the same way.
\end{proof}

When solving differential equations it will be crucial to have a bound which is linear in $\lVert f \rVert_{v,\beta}$. The superlinear dependence on $\lVert f^v \rVert_\infty$ will not pose any problem as we will always have $f^v = F(\tilde f)$ for some suitable $\tilde f$, so that for bounded $F$ we get $\lVert F(f)\rVert_{v,\beta} \lesssim_{F,v} 1 + \lVert f \rVert_{v,\beta}$.

\subsection{A basic commutator estimate}

Here we prove the commutator estimate which will be the main ingredient in the construction of the integral $I(f,\dd g)$, where $f$ is paracontrolled by $v$ and $g$ is paracontrolled by $w$, and where we assume that the integral $I(v,\dd w)$ exists.

\begin{prop}\label{p:commutator 1}
   Let $\alpha, \beta, \gamma \in (0,1)$, and assume that $\alpha+\beta+\gamma>1$ and $\beta+\gamma < 1$. Let $f\in \C^\alpha$, $v\in \C^\beta$, and $w \in \C^\gamma$. Then the ``commutator''
   \begin{align}\label{e:commutator 1 def}
      C(f,v,w) &:= L(\pi_<(f, v),w) - I(f,\dd L(v,w))\\ \nonumber
      & := \lim_{N\rightarrow \infty} [L(S_N(\pi_<(f, v)), S_N w) - I(f,\dd L(S_N v,S_N w))] \\ \nonumber
      &\, = \lim_{N\rightarrow \infty} \sum_{p\le N} \sum_{q<p} \Biggl[ \int_0^\cdot \Delta_p (\pi_<(f,v))(s) \dd \Delta_{q} w(s) - \int_0^\cdot \dd (\Delta_{q} (\pi_<(f,v)))(s) \Delta_{p} w(s) \\ \nonumber
      &\hspace{90pt} - \Bigl(\int_0^\cdot f(s) \Delta_p v(s) \dd \Delta_{q} w(s) - \int_0^\cdot f(s) \dd (\Delta_{q} v)(s) \Delta_{p} w(s)\Bigr) \Biggr]
   \end{align}
   converges in $\C^{\alpha+\beta+\gamma-\varepsilon}$ for all $\varepsilon > 0$. Moreover,
   \begin{align*}
      \lVert C(f,v,w)\rVert_{\alpha + \beta + \gamma} \lesssim \lVert f\rVert_\alpha \lVert v \rVert_\beta \lVert w \rVert_\gamma.
   \end{align*}
\end{prop}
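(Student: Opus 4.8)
The plan is to expand everything into Schauder blocks, use integration by parts to rewrite the $L$--terms, and then group the resulting quadruple sum so that the ``bad'' contributions cancel between $L(\pi_<(f,v),w)$ and $I(f,\dd L(v,w))$, leaving only terms that can be estimated by Corollary~\ref{c:schauder blocks} (the ``good'' estimate \eqref{e:schauder blocks good}). Concretely, $\Delta_r(\pi_<(f,v)) = \Delta_r(S_{r-1}f\,\Delta_r v)$ up to lower-order block interactions, so in both $\Delta_p(\pi_<(f,v))\,\dd\Delta_q w$ (with $q<p$) and in $f\,\Delta_p v\,\dd\Delta_q w$ the ``$f$--part'' is frozen at the coarse scale $S_{p-1}f$; the difference is a commutator between the paraproduct truncation $S_{p-1}f$ and the pointwise multiplication by $f$, which is exactly the type of object controlled by Lemma~\ref{l:schauder coefficients of iterated integrals} together with the Hölder regularity of $f$. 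The same bookkeeping applies to the two integration-by-parts terms $\int \dd(\Delta_q(\pi_<(f,v)))\,\Delta_p w$ versus $\int f\,\dd(\Delta_q v)\,\Delta_p w$.

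**Main steps.** First, I would justify that the truncated objects $L(S_N(\pi_<(f,v)), S_N w)$ and $I(f,\dd L(S_N v, S_N w))$ are well defined for each $N$ (finite sums) and write out $C(f,v,w)$ as the limit of the displayed double sum over $p\le N$, $q<p$. Second, for each pair $q<p$ I would apply Corollary~\ref{c:schauder blocks} / Lemma~\ref{l:schauder coefficients of iterated integrals} to bound $\|\Delta_i(\cdot)\|_\infty$ of each of the four integrals after expanding $\Delta_p(\pi_<(f,v))$, $\Delta_q(\pi_<(f,v))$ into their defining sums $\sum_{m,n}\langle\cdot\rangle\varphi$. The key algebraic point is that in the combination $\Delta_p(\pi_<(f,v))\,\dd\Delta_q w - f\,\Delta_p v\,\dd\Delta_q w$ the leading term $S_{p-1}f\,\Delta_p v\,\dd\Delta_q w$ is common, so we are left with $(S_{p-1}f - f)\Delta_p v\,\dd\Delta_q w$ plus the genuinely-paraproduct-internal terms; since $\|(S_{p-1}f - f)|_{\mathrm{supp}\,\varphi_{pm}}\|_\infty \lesssim 2^{-p\alpha}\|f\|_\alpha$, each such term gains the extra factor $2^{-p\alpha}$ relative to the $L(v,w)$ estimate. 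Third, I would sum the resulting per-block bounds over $q<p$ and then over $p$, checking that the Schauder-coefficient estimate of $\Delta_i$ applied to each term, multiplied by $2^{i(\alpha+\beta+\gamma)}$, gives a summable geometric series in $p$ (and in $i$); here the hypotheses $\alpha+\beta+\gamma>1$ and $\beta+\gamma<1$ are used exactly as in the proof of Lemma~\ref{l:Levy area regularity} to make the $p\le i$ and $p>i$ regimes converge. This simultaneously yields convergence in $\C^{\alpha+\beta+\gamma-\varepsilon}$ (one loses $\varepsilon$ only if one wants the tail of the series to go to zero; the full norm bound in $\C^{\alpha+\beta+\gamma}$ follows from the uniform geometric estimate) and the claimed inequality $\|C(f,v,w)\|_{\alpha+\beta+\gamma}\lesssim \|f\|_\alpha\|v\|_\beta\|w\|_\gamma$.

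**Main obstacle.** The delicate point is the bookkeeping near the diagonal, i.e.\ the case $i = q$ or $i = p$ in Corollary~\ref{c:schauder blocks}, where only the bad bound \eqref{e:schauder blocks bad} holds. One has to verify that in $C(f,v,w)$ these resonant contributions either cancel between the $L(\pi_<(f,v),w)$ term and the $I(f,\dd L(v,w))$ term, or else carry the compensating factor $2^{-p\alpha}$ from the commutator $S_{p-1}f - f$, which upgrades \eqref{e:schauder blocks bad} to something summable precisely when $\alpha+\beta+\gamma>1$. A secondary nuisance is the treatment of the low-frequency indices $(p,m)\in\{(-1,0),(0,0)\}$ and of the term $\int \dd(\Delta_q(\pi_<(f,v)))\,\Delta_p w$, where one must be careful that the integration by parts used to define $L$ and the one implicit in $I$ are applied consistently; these are handled by the same elementary case analysis as in Lemma~\ref{l:iterated schauder integrals1} and contribute only finitely many, easily bounded, terms.
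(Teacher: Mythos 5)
Your proposal is correct and follows essentially the same route as the paper: your cancellation of the common term $S_{p-1}f\,\Delta_p v$ is algebraically the same as the paper's cancellation of the resonant blocks $\Delta_N(\Delta_i f\,\Delta_N v)$, $i<N$, after full Schauder-block expansion, and the remaining terms are handled exactly as you indicate via Corollaries~\ref{c:schauder blocks} and~\ref{c:schauder blocks product}, with the diagonal tail $\Delta_j(\Delta_j f\,\Delta_N v)$, $j>N$, carrying the compensating $2^{-j\alpha}$ and the hypotheses $\alpha+\beta+\gamma>1$, $\beta+\gamma<1$ entering in the summations. The paper likewise combines a uniform-convergence estimate with a uniform $\C^{\alpha+\beta+\gamma}$ bound and deduces convergence in $\C^{\alpha+\beta+\gamma-\varepsilon}$ by compactness, matching your final step.
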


\begin{proof}
   We only argue for the first difference in~\eqref{e:commutator 1 def}, i.e. for
   \begin{align}\label{e:commutator 1 pr1}
      X_N := \sum_{p\le N} \sum_{q<p} \left[ \int_0^\cdot \Delta_p (\pi_<(f,v))(s) \dd \Delta_{q} w(s) - \int_0^\cdot f(s) \Delta_p v(s) \dd \Delta_{q} w(s) \right].
   \end{align}
   The second difference can be handled using the same arguments. First we prove that $(X_N)$ converges uniformly, then we show that $\lVert X_N \rVert_{\alpha + \beta + \gamma}$ stays uniformly bounded. This will imply the desired result, since bounded sets in $\C^{\alpha+\beta+\gamma}$ are relatively compact in $\C^{\alpha+\beta+\gamma-\varepsilon}$.

   To prove uniform convergence, note that
   \begin{align}\label{e:commutator 1 pr2}\nonumber
      X_N - X_{N-1} & = \sum_{q<N}\left[ \int_0^\cdot \Delta_N (\pi_<(f,v))(s) \dd \Delta_{q} w(s) - \int_0^\cdot f(s) \Delta_N v(s) \dd \Delta_{q} w(s) \right]\\ \nonumber
      & = \sum_{q<N} \Biggl[ \sum_{j\le N} \sum_{i<j} \int_0^\cdot \Delta_N (\Delta_i f \Delta_j v)(s) \dd \Delta_{q} w(s)\\
      &\hspace{40pt} - \sum_{j \ge N} \sum_{i\le j} \int_0^\cdot \Delta_j (\Delta_i f \Delta_N v)(s) \dd \Delta_{q} w(s) \Biggr],
   \end{align}
   where for the second term it is possible to take the infinite sum over $j$ outside of the integral because $\sum_j \Delta_j g$ converges uniformly to $g$ and because $\Delta_q w$ is a finite variation path. We also used that $\Delta_N (\Delta_i f \Delta_j v)=0$ whenever $i > N$ or $j > N$. Only very few terms in \eqref{e:commutator 1 pr2} cancel. Nonetheless these cancellations are crucial, since they eliminate most terms for which we only have the worse estimate \eqref{e:schauder blocks product bad} in Corollary~\ref{c:schauder blocks product}. We obtain
   \begin{align}\label{e:commutator 1 pr3}\nonumber
      X_N - X_{N-1}& = \sum_{q<N} \sum_{j<N} \sum_{i<j} \int_0^\cdot \Delta_N (\Delta_i f \Delta_j v)(s) \dd \Delta_{q} w(s) -  \sum_{q<N} \int_0^\cdot \Delta_N (\Delta_N f \Delta_N v)(s) \dd \Delta_{q} w(s) \\ \nonumber
      &\hspace{20pt} - \sum_{q<N} \sum_{j > N} \sum_{i<j} \int_0^\cdot \Delta_j (\Delta_i f \Delta_N v)(s) \dd \Delta_{q} w(s) \\
      &\hspace{20pt} - \sum_{q<N} \sum_{j > N} \int_0^\cdot \Delta_j (\Delta_j f \Delta_N v)(s) \dd \Delta_{q} w(s).
   \end{align}
   Note that $\lVert \partial_t \Delta_q w\rVert_\infty \lesssim 2^q \lVert \Delta_q w \rVert_\infty$. Hence, an application of Corollary~\ref{c:schauder blocks product}, where we use \eqref{e:schauder blocks product good} for the first three terms and \eqref{e:schauder blocks product bad} for the fourth term, yields
   \begin{align}\label{e:commutator 1 pr convergence speed} \nonumber
      \lVert X_N - X_{N-1} \rVert_\infty &\lesssim  \lVert f \rVert_\alpha \lVert v \rVert_\beta \lVert w\rVert_\gamma \Biggl[ \sum_{q<N} \sum_{j<N} \sum_{i<j} 2^{-2N + i + j} 2^{-i\alpha} 2^{-j\beta}2^{q(1-\gamma)} \\ \nonumber
      &\qquad + \sum_{q<N} 2^{-N(\alpha + \beta)} 2^{q(1-\gamma)} + \sum_{q<N} \sum_{j > N} \sum_{i<j} 2^{-2j+i+N} 2^{-i\alpha} 2^{-N\beta} 2^{q(1-\gamma)} \\ \nonumber
      &\qquad +  \sum_{q<N} \sum_{j > N} 2^{-j\alpha} 2^{-N\beta} 2^{q(1-\gamma)}\Biggr] \\
      &\lesssim \lVert f \rVert_\alpha \lVert v \rVert_\beta \lVert w\rVert_\gamma 2^{-N(\alpha + \beta + \gamma - 1)},
   \end{align}
   where in the last step we used $\alpha, \beta, \gamma < 1$. Since $\alpha + \beta + \gamma > 1$, this gives us the uniform convergence of $(X_N)$.

   Next let us show that $\lVert X_N \rVert_{\alpha + \beta + \gamma} \lesssim \lVert f \rVert_\alpha \lVert v \rVert_\beta \lVert w\rVert_\gamma$ for all $N$. Similarly to \eqref{e:commutator 1 pr3} we obtain for $n \in \N$
   \begin{align*}
      \Delta_n X_N &= \sum_{p\le N} \sum_{q<p}\Delta_n \Biggl[ \sum_{j<p} \sum_{i<j} \int_0^\cdot \Delta_p (\Delta_i f \Delta_j v)(s) \dd \Delta_{q} w(s) - \int_0^\cdot \Delta_p (\Delta_p f \Delta_p v)(s) \dd \Delta_{q} w(s)\\
      &\hspace{80pt}- \sum_{j>p} \sum_{i\le j} \int_0^\cdot \Delta_j (\Delta_i f \Delta_p v)(s) \dd \Delta_{q} w(s) \Biggr],
   \end{align*}
   and therefore by Corollary~\ref{c:schauder blocks}
   \begin{align*}
      \lVert \Delta_n X_N\rVert_\infty &\lesssim \sum_{p} \sum_{q<p} \Biggl[ \sum_{j<p} \sum_{i<j} 2^{-(n\vee p) - n + p + q} \lVert \Delta_p (\Delta_i f \Delta_j v)\rVert_\infty \lVert \Delta_{q} w\rVert_\infty \\
      &\hspace{60pt} + 2^{-(n\vee p) - n + p + q}\lVert \Delta_p (\Delta_p f \Delta_p v)\rVert_\infty \lVert \Delta_{q} w\rVert_\infty\\
      &\hspace{60pt} + \sum_{j>p} \sum_{i\le j} 2^{-(n\vee j) - n + j + q} \lVert\Delta_j(\Delta_i f \Delta_p v)\rVert_\infty \lVert \Delta_{q} w\rVert_\infty \Biggr].
   \end{align*}
   Now we apply Corollary~\ref{c:schauder blocks product}, where for the last term we distinguish the cases $i < j$ and $i = j$. Using that $1-\gamma > 0$, we get
   \begin{align*}
      \lVert \Delta_n X_N\rVert_\infty & \lesssim  \lVert f \rVert_\alpha \lVert v \rVert_\beta \lVert w\rVert_\gamma \sum_p 2^{p(1-\gamma)} \Biggl[ \sum_{j<p} \sum_{i<j} 2^{-(n\vee p) - n + p} 2^{-2p} 2^{i(1-\alpha)} 2^{j(1-\beta)} \\
      &\hspace{150pt} + 2^{-(n\vee p) - n + p} 2^{-p\alpha} 2^{-p\beta}\\
      &\hspace{150pt} + \sum_{j>p} \sum_{i < j} 2^{-(n\vee j) - n + j} 2^{-2j + i(1-\alpha) + p (1-\beta)}\\
      &\hspace{150pt} + \sum_{j>p} 2^{-(n\vee j) - n + j} 2^{-j\alpha - p \beta} \Biggr]\\
       &\lesssim  \lVert f \rVert_\alpha \lVert v \rVert_\beta \lVert w\rVert_\gamma 2^{-n(\alpha+\beta+\gamma)},
   \end{align*}
   where we used both that $\alpha+\beta+\gamma>1$ and that $\beta+\gamma<1$.
\end{proof}

\begin{rmk}
   If $\beta + \gamma = 1$, we can apply Proposition~\ref{p:commutator 1} with $\beta - \varepsilon$ to obtain that $C(f,v,w) \in \C^{\alpha + \beta + \gamma - \varepsilon}$ for every sufficiently small $\varepsilon > 0$. If $\beta + \gamma > 1$, then we are in the Young setting and there is no need to introduce the commutator.
\end{rmk}

For later reference, we collect the following result from the proof of Proposition~\ref{p:commutator 1}:

\begin{lem}\label{l:commutator speed of convergence}
   Let $\alpha, \beta, \gamma, f, v, w$ be as in Proposition~\ref{p:commutator 1}. Then
   \[
      \lVert C(f,v,w) - L(S_N(\pi_<(f, v)), S_N w) - I(f,\dd L(S_N v,S_N w))\rVert_\infty \lesssim 2^{-N(\alpha + \beta + \gamma - 1)} \lVert f \rVert_\alpha \lVert v \rVert_\beta \lVert w\rVert_\gamma.
   \]
\end{lem}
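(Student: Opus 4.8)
The plan is to extract the quantitative convergence rate directly from the proof of Proposition~\ref{p:commutator 1}, since that proof already established everything needed. Recall that in that proof we wrote $C(f,v,w)$ as a limit of the partial sums, and for the first difference the relevant object was $X_N$ as in~\eqref{e:commutator 1 pr1}; the second difference is handled identically. The key intermediate estimate obtained there was~\eqref{e:commutator 1 pr convergence speed}, namely
\[
   \lVert X_N - X_{N-1} \rVert_\infty \lesssim \lVert f \rVert_\alpha \lVert v \rVert_\beta \lVert w\rVert_\gamma 2^{-N(\alpha + \beta + \gamma - 1)}.
\]
So the first step is simply to invoke this bound.

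Next, I would sum the telescoping series: writing $X := \lim_{M\to\infty} X_M$, which exists in $\C^\infty$ (uniformly) by Proposition~\ref{p:commutator 1}, we have for every $N$
\[
   X - X_N = \sum_{M > N} (X_M - X_{M-1}),
\]
and therefore by the triangle inequality and~\eqref{e:commutator 1 pr convergence speed},
\[
   \lVert X - X_N \rVert_\infty \le \sum_{M>N} \lVert X_M - X_{M-1}\rVert_\infty \lesssim \lVert f \rVert_\alpha \lVert v \rVert_\beta \lVert w\rVert_\gamma \sum_{M>N} 2^{-M(\alpha+\beta+\gamma-1)}.
\]
Since $\alpha+\beta+\gamma-1 > 0$ by assumption, the geometric series converges and is bounded by a constant times $2^{-N(\alpha+\beta+\gamma-1)}$, giving $\lVert X - X_N\rVert_\infty \lesssim 2^{-N(\alpha+\beta+\gamma-1)}\lVert f\rVert_\alpha \lVert v\rVert_\beta \lVert w\rVert_\gamma$. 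The very same argument applies verbatim to the second difference in~\eqref{e:commutator 1 def}, and adding the two estimates yields the claim, because $L(S_N(\pi_<(f,v)),S_Nw) - I(f,\dd L(S_Nv,S_Nw))$ is exactly the $N$-th partial sum whose limit is $C(f,v,w)$ (the statement as written has a sign typo: it should read $C(f,v,w) - [L(S_N(\pi_<(f,v)),S_Nw) - I(f,\dd L(S_Nv,S_Nw))]$).

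Honestly, there is no real obstacle here: the lemma is a bookkeeping corollary of the proof of Proposition~\ref{p:commutator 1}, and the only thing to be careful about is matching the partial-sum indexing in the lemma statement with the $X_N$ (and its analogue for the second difference) from that proof, and noting that $S_N$ truncation in the arguments corresponds to truncating the outer sum at $p \le N$. One should also double-check that the sub-leading terms in~\eqref{e:commutator 1 pr convergence speed} — the ones coming from $j > N$ — indeed contribute at the same rate $2^{-N(\alpha+\beta+\gamma-1)}$ and not worse, but this was already verified in the proof of Proposition~\ref{p:commutator 1}. So the whole proof is two lines: cite~\eqref{e:commutator 1 pr convergence speed}, then sum the geometric tail.
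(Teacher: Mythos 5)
Your proposal is correct and is essentially the paper's own argument: the paper's proof is literally ``simply sum up~\eqref{e:commutator 1 pr convergence speed} over $N$,'' i.e.\ the same geometric-tail summation of the telescoping bound, applied to both differences in~\eqref{e:commutator 1 def}. Your side remark about the bracketing/sign in the statement (it should be read as $C(f,v,w) - [L(S_N(\pi_<(f,v)),S_Nw) - I(f,\dd L(S_Nv,S_Nw))]$) is also accurate.
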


\begin{proof}
   Simply sum up~\eqref{e:commutator 1 pr convergence speed} over $N$.
\end{proof}

\subsection{Pathwise integration for paracontrolled paths}\label{s:schauder rough path}

In this section we apply the commutator estimate to construct the rough path integral under the assumption that the L\'evy area exists for a given reference path.

\begin{thm}\label{t:rough path integral}
   Let $\alpha \in (1/3,1)$, $\beta \in (0,\alpha]$ and assume that $2\alpha+\beta>0$ as well as $\alpha+\beta \neq 1$. Let $v \in \C^\alpha(\R^d)$ and assume that the L\'evy area
   \begin{align*}
      L(v,v) := \lim_{N \rightarrow \infty}\bigl( L(S_N v^k, S_N v^\ell) \bigr)_{1\le k \le d, 1\le \ell \le d}
   \end{align*}
   converges uniformly and that $\sup_N \lVert L(S_N v, S_N v) \rVert_{2\alpha} < \infty$. Let $f \in \D^\alpha_v(\L(\R^d, \R^m))$. Then $I(S_N f, \dd S_N v)$ converges in $\C^{\alpha - \varepsilon}$ for all $\varepsilon > 0$. Denoting the limit by $I(f,\dd v)$, we have
   \begin{align*}
      \lVert I(f,\dd v)\rVert_\alpha \lesssim \lVert f \rVert_{v,\beta} \bigl(\lVert v \rVert_\alpha + \lVert v \rVert_\alpha^2 + \lVert L(v,v) \rVert_{2\alpha}\bigr).
   \end{align*}
   Moreover, $I(f,\dd v) \in \D^{\alpha}_v$ with derivative $f$ and
   \begin{align*}
      \lVert I(f,\dd v) \rVert_{v,\alpha} \lesssim \lVert f \rVert_{v,\beta} \bigl(1 + \lVert v \rVert_\alpha^2 + \lVert L(v,v) \rVert_{2\alpha}\bigr).
   \end{align*}
\end{thm}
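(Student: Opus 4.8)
The plan is to decompose the integral $I(S_N f, \dd S_N v)$ using the three-component decomposition from Section~\ref{s:young}, applied with both arguments truncated at level $N$:
\[
   I(S_N f, \dd S_N v) = L(S_N f, S_N v) + S(S_N f, S_N v) + \pi_<(S_N f, S_N v).
\]
The paraproduct term converges trivially (it is continuous in the uniform arguments) to $\pi_<(f,v) \in \C^\alpha$, and the symmetric part $S(S_N f, S_N v)$ converges in $\C^{2\alpha}$ by Lemma~\ref{l:symmetric part} together with the fact that $S_N f \to f$ in $\C^{\alpha-\varepsilon}$ and $S_N v \to v$ in $\C^{\alpha - \varepsilon}$; since $2\alpha > 2\alpha - \varepsilon'$ this is harmless. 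So the crux is the L\'evy area term $L(S_N f, S_N v)$.

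For this I would use that $f$ is paracontrolled: $f = \pi_<(f^v, v) + f^\sharp$ with $f^\sharp \in \C^{\alpha+\beta}$ and $f^v \in \C^\beta$. Split
\[
   L(S_N f, S_N v) = L(S_N(\pi_<(f^v,v)), S_N v) + L(S_N f^\sharp, S_N v).
\]
The second piece: since $f^\sharp \in \C^{\alpha+\beta}$, $v \in \C^\alpha$ and $(\alpha+\beta) + \alpha > 1$ (this is where $2\alpha + \beta > 1$ enters — note the hypothesis reads $2\alpha+\beta>0$ but for $\alpha>1/3$, $\beta\le\alpha$ one genuinely needs $>1$, and indeed $2\alpha+\beta > 2/3 + \dots$; I would treat it as $2\alpha+\beta>1$), Lemma~\ref{l:Levy area regularity} shows $L$ is bounded $\C^{\alpha+\beta}\times\C^\alpha \to \C^{2\alpha+\beta}$, and a routine continuity argument gives convergence to $L(f^\sharp, v) \in \C^{2\alpha+\beta} \subset \C^{\alpha}$. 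For the first piece I invoke the key commutator estimate, Proposition~\ref{p:commutator 1}, with $(f^v, v, v)$ playing the roles of $(f, v, w)$ (regularities $\beta, \alpha, \alpha$, and one checks $\beta + \alpha < 1$ from $\alpha+\beta\ne 1$ being on the right side, while $\beta + \alpha + \alpha = \alpha+\beta+\alpha > 1$): this gives
\[
   L(S_N(\pi_<(f^v,v)), S_N v) = I(f^v, \dd L(S_N v, S_N v)) + C_N,
\]
where $C_N \to C(f^v,v,v) \in \C^{\alpha+\beta+\alpha-\varepsilon}$. Now $I(f^v, \dd L(S_N v, S_N v))$ is a Young-type integral of $f^v \in \C^\beta$ against $L(S_N v, S_N v)$, which by hypothesis converges in $\C^{2\alpha - \varepsilon}$ with uniformly bounded $\C^{2\alpha}$ norm, and $\beta + 2\alpha > 1$, so Theorem~\ref{t:young integral} (applied carefully along the approximating sequence, using the uniform bound and interpolation/compactness) yields convergence of this term to $I(f^v, \dd L(v,v)) \in \C^\beta$ with the bound $\lVert \cdot \rVert_\beta \lesssim \lVert f^v\rVert_\beta \lVert L(v,v)\rVert_{2\alpha}$, and moreover $I(f^v,\dd L(v,v)) - \pi_<(f^v, L(v,v)) \in \C^{2\alpha+\beta}$.

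Assembling everything: $I(S_N f, \dd S_N v)$ converges in $\C^{\alpha-\varepsilon}$, and collecting the norm estimates — $\lVert\pi_<(f,v)\rVert_\alpha \lesssim \lVert f\rVert_\infty\lVert v\rVert_\alpha \lesssim \lVert f\rVert_{v,\beta}(1+\lVert v\rVert_\alpha)\lVert v\rVert_\alpha$ via Lemma~\ref{l:paraproduct definition} and Remark~\ref{r:gubinelli controlled implies our controlled}, the symmetric part contributing $\lVert f\rVert_{v,\beta}\lVert v\rVert_\alpha^2$-type terms, the commutator contributing $\lVert f^v\rVert_\beta \lVert v\rVert_\alpha^2 \le \lVert f\rVert_{v,\beta}\lVert v\rVert_\alpha^2$, and the Young term contributing $\lVert f^v\rVert_\beta\lVert L(v,v)\rVert_{2\alpha}$ — gives the first displayed bound. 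For the paracontrolled statement, I observe that all the summands above except $\pi_<(f,v)$ lie in spaces strictly smoother than $\C^\alpha$ (namely $\C^{2\alpha}$, $\C^{2\alpha+\beta}$, $\C^{\alpha+\beta}$, $\C^{2\alpha+\beta}$), all of which embed into $\C^{\alpha+\beta}$ since $\beta \le \alpha$; hence $I(f,\dd v) - \pi_<(f, v) \in \C^{\alpha+\beta}$, i.e. $I(f,\dd v) \in \D^\alpha_v$ with derivative $f$, and tracking the norms of the remainder pieces gives the claimed $\lVert I(f,\dd v)\rVert_{v,\alpha}$ bound.

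The main obstacle I anticipate is the bookkeeping around convergence along the truncations: Proposition~\ref{p:commutator 1} and Theorem~\ref{t:young integral} are stated for fixed functions, whereas here I must apply them to $S_N v$, $S_N f^v$, etc., and pass to the limit. The clean way is to establish, for each component, both (i) uniform convergence as $N\to\infty$ and (ii) a uniform bound in the relevant $\C^\theta$ norm with $\theta$ slightly larger than the target exponent, then use that bounded sets of $\C^\theta$ are relatively compact in $\C^{\theta-\varepsilon}$ (the same device used in the proof of Proposition~\ref{p:commutator 1}) to upgrade uniform convergence to convergence in $\C^{\theta-\varepsilon}$ and to transfer the norm bound to the limit by lower semicontinuity. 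One must also double-check that the hypothesis as literally stated, $2\alpha+\beta>0$, is what is meant — for the L\'evy area term $L(f^\sharp,v)$ and the Young term $I(f^v,\dd L(v,v))$ to make sense one needs $2\alpha+\beta>1$, so I would read the hypothesis accordingly (it is automatically implied when $\beta$ is not too small relative to $\alpha>1/3$, but in general it is the genuine constraint, consistent with $\beta+\gamma<1$ in Proposition~\ref{p:commutator 1} being the only ``upper'' restriction).
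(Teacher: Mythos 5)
Your proposal follows essentially the same route as the paper: the paper's proof consists precisely of the decomposition $I(S_N f,\dd S_N v)= S(S_Nf,S_Nv)+\pi_<(S_Nf,S_Nv)+L(S_Nf^\sharp,S_Nv)+[L(S_N\pi_<(f^v,v),S_Nv)-I(f^v,\dd L(S_Nv,S_Nv))]+I(f^v,\dd L(S_Nv,S_Nv))$, with convergence from Proposition~\ref{p:commutator 1} and Theorem~\ref{t:young integral}, and your reading of the hypothesis as $2\alpha+\beta>1$ (and the reduction to the case $\alpha+\beta<1$, the Young case being trivial) matches what the paper's proof actually uses. The one point you should repair is the bookkeeping in the final paracontrolled step: you record $I(f^v,\dd L(v,v))\in\C^\beta$ and then conclude only that $I(f,\dd v)-\pi_<(f,v)\in\C^{\alpha+\beta}$, which by the definition of $\D^\alpha_v$ (derivative $f$ measured in $\C^\alpha$, remainder required in $\C^{2\alpha}$) would only yield $I(f,\dd v)\in\D^\beta_v$, not the claimed $\D^\alpha_v$, and would also not produce the stated $\lVert I(f,\dd v)\rVert_{v,\alpha}$ bound. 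The fix is already contained in Theorem~\ref{t:young integral}: the Young integral inherits the regularity of the \emph{integrator}, so $I(f^v,\dd L(v,v))\in\C^{2\alpha}$ with $\lVert I(f^v,\dd L(v,v))\rVert_{2\alpha}\lesssim\lVert f^v\rVert_\beta\lVert L(v,v)\rVert_{2\alpha}$; combined with $S(f,v)\in\C^{2\alpha}$, $L(f^\sharp,v)\in\C^{2\alpha+\beta}$ and $C(f^v,v,v)\in\C^{2\alpha+\beta}$, every term of the limit except $\pi_<(f,v)$ lies in $\C^{2\alpha}$, which is exactly what the $\D^\alpha_v$ statement and the second bound require. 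With that correction, and your (correct) compactness/uniform-bound device for passing bounds to the limit along the truncations, the argument coincides with the paper's.
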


\begin{proof}
   If $\beta+\gamma >1$, everything follows from the Young case, Theorem~\ref{t:young integral}, so let $\beta+\gamma<1$. We decompose
   \begin{align*}
      I(S_N f, \dd S_N v) & = S(S_N f, S_N v) + \pi_<(S_N f, S_N v) + L(S_N f^\sharp, S_N v) \\
      &\quad + [L(S_N \pi_<(f^v,v), S_N v) - I(f^v, \dd L(S_N v,S_N v))] + I(f^v, \dd L(S_N v,S_N v)).
   \end{align*}
   Convergence then follows from Proposition~\ref{p:commutator 1} and Theorem~\ref{t:young integral}. The limit is given by
   \[
      I(f,\dd v) = S(f,v) + \pi_<(f,v) + L(f^\sharp, v) + C(f^v,v,v) + I(f^v, \dd L(v,v)),
   \]
   from where we easily deduce the claimed bounds.
\end{proof}

\begin{rmk}\label{r:locality of integral}
   Since $I(f,\dd v) = \lim_{N\to \infty} \int_0^\cdot S_N f \dd S_N v$, the integral is a local operator in the sense that $I(f,\dd v)$ is constant on every interval $[s,t]$ for which $f|_{[s,t]}=0$. In particular we can estimate $I(f,\dd v)|_{[0,t]}$ using only $f|_{[0,t]}$ and $f^v|_{[0,t]}$.
\end{rmk}

For fixed $v$ and $L(v,v)$, the map $f \mapsto I(f,\dd v)$ is linear and bounded from $\D^\beta_v$ to $\D^\alpha_v$, and this is what we will need to solve differential equations driven by $v$. But we can also estimate the speed of convergence of $I(S_N f, \dd S_N v)$ to $I(f, \dd v)$, measured in uniform distance rather than in $\C^\alpha$:

\begin{cor}\label{c:rough path speed of convergence}
   Let $\alpha \in (1/3,1/2]$ and let $\beta,v,f$ be as in Theorem~\ref{t:rough path integral}. Then we have for all $\varepsilon \in (0, 2\alpha + \beta-1)$
   \begin{align*}
      \lVert I(S_N f, \dd S_N v) - I(f,\dd v)\rVert_\infty &\lesssim_\varepsilon 2^{-N(2\alpha + \beta - 1 - \varepsilon)} \lVert f\rVert_{v,\beta} \bigl(\lVert v \rVert_\alpha + \lVert v \rVert_\alpha^2 \bigr)\\
      &\qquad +\lVert f^v \rVert_\beta \lVert L(S_N v, S_N v) - L(v,w)\rVert_{2\alpha-\varepsilon}.
   \end{align*}
\end{cor}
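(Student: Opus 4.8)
The plan is to revisit the decomposition of $I(S_N f, \dd S_N v)$ used in the proof of Theorem~\ref{t:rough path integral} and to track the speed of convergence of each of the five pieces separately, measured in uniform norm. Writing
\[
   I(S_N f, \dd S_N v) = S(S_N f, S_N v) + \pi_<(S_N f, S_N v) + L(S_N f^\sharp, S_N v) + [L(S_N \pi_<(f^v,v), S_N v) - I(f^v, \dd L(S_N v,S_N v))] + I(f^v, \dd L(S_N v,S_N v)),
\]
and subtracting the corresponding limit identity for $I(f,\dd v)$, I would estimate the five differences one at a time. The point is that each of $S(\cdot,\cdot)$, $\pi_<(\cdot,\cdot)$, $L(\cdot,\cdot)$ lands in a H\"older space of exponent strictly larger than the ``target'' exponent we measure in (namely $0$, i.e. uniform norm), so truncating its arguments at level $N$ costs only a geometrically decaying factor by Ciesielski's isomorphism (Lemma~\ref{l:ciesielski}): if $h \in \C^\delta$ then $\lVert h - S_{N}h\rVert_\infty \lesssim 2^{-\delta N}\lVert h\rVert_\delta$.

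Concretely: for the symmetric part, $S(S_N f, S_N v) - S(f,v)$ can be written as $S(S_N f - f, S_N v) + S(f, S_N v - v)$, and since $f \in \C^\alpha$, $v \in \C^\alpha$ with $S(\cdot,\cdot)$ bounded into $\C^{2\alpha}$ (Lemma~\ref{l:symmetric part}) — but more simply, just using the structure of $S$ and that its Schauder blocks beyond level $N$ are controlled — one gets a bound of order $2^{-N\alpha}$ times the relevant norms; similarly $\lVert \pi_<(S_N f, S_N v) - \pi_<(f,v)\rVert_\infty \lesssim \lVert \pi_<(f,v) - S_{N}\pi_<(f,v)\rVert_\infty + (\text{error from truncating the arguments}) \lesssim 2^{-N\alpha}\lVert f\rVert_\alpha\lVert v\rVert_\alpha$ using $\pi_<(f,v)\in\C^\alpha$ and bilinearity. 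The L\'evy area term $L(S_N f^\sharp, S_N v) - L(f^\sharp, v)$ is handled the same way: $f^\sharp \in \C^{\alpha+\beta}$ and $v \in \C^\alpha$ with $2\alpha+\beta>1$, so $L$ maps into $\C^{2\alpha+\beta}$ by Lemma~\ref{l:Levy area regularity}, and one extracts $2^{-N(2\alpha+\beta-1-\varepsilon)}$ — here the $\varepsilon$ and the factor $2\alpha+\beta-1$ appear because, when only one argument is truncated, convergence of $L$ is governed by the ``borderline'' exponent $\alpha+\beta+\alpha-1$ of the commutator-type estimate rather than by the a priori regularity of the limit. The commutator term $L(S_N \pi_<(f^v,v), S_N v) - I(f^v, \dd L(S_N v, S_N v)) - C(f^v,v,v)$ is exactly what Lemma~\ref{l:commutator speed of convergence} controls: it is bounded by $2^{-N(2\alpha+\alpha-1)}\lVert f^v\rVert_\alpha\lVert v\rVert_\alpha\lVert v\rVert_\alpha \le 2^{-N(2\alpha+\beta-1)}\lVert f^v\rVert_\beta(\lVert v\rVert_\alpha+\lVert v\rVert_\alpha^2)$ since $\beta\le\alpha$ and $\lVert\cdot\rVert_\beta\gtrsim\lVert\cdot\rVert_\alpha$ is false — rather one uses $\lVert f^v\rVert_\alpha \lesssim \lVert f^v\rVert_\beta$ when $\beta \le \alpha$, wait, that inequality goes the wrong way; instead I would simply invoke Lemma~\ref{l:commutator speed of convergence} with exponents $(\alpha,\alpha,\alpha)$ and note $\lVert f^v\rVert_\alpha$ is finite and dominated by $\lVert f\rVert_{v,\beta}$-dependent quantities, absorbing everything into the stated right-hand side. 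Finally, for the last term,
\[
   I(f^v, \dd L(S_N v, S_N v)) - I(f^v, \dd L(v,v)) = I(f^v, \dd (L(S_N v, S_N v) - L(v,v))),
\]
and since $f^v \in \C^\beta$ while $L(S_N v, S_N v) - L(v,v) \in \C^{2\alpha-\varepsilon}$ (being a difference of elements bounded in $\C^{2\alpha}$ by hypothesis, and convergent), the Young estimate~\eqref{e:Young controlled} — valid because $\beta + (2\alpha-\varepsilon) > 1$ for $\varepsilon$ small — gives $\lVert I(f^v, \dd(L(S_N v,S_N v)-L(v,v)))\rVert_\infty \lesssim \lVert f^v\rVert_\beta \lVert L(S_N v, S_N v) - L(v,v)\rVert_{2\alpha-\varepsilon}$, which is the second term in the asserted bound (with $w$ a typo for $v$ in the statement).

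The main obstacle is bookkeeping rather than any single hard estimate: one must be careful that truncating at level $N$ in the \emph{rougher} argument of a bilinear operator like $L$ or $\pi_<$ does not simply inherit the a priori regularity of the output, so the naive rate would be $2^{-N\alpha}$ for $\pi_<$ and $L$; the genuine rate $2^{-N(2\alpha+\beta-1-\varepsilon)}$ only emerges after combining the symmetric part, the paraproduct, the $L(f^\sharp,v)$ term and the commutator, exploiting that the \emph{sum} telescopes better than its summands (this is the same cancellation phenomenon exploited in~\eqref{e:commutator 1 pr3}). In practice, rather than re-deriving these cancellations, I would lean on Lemma~\ref{l:commutator speed of convergence} for the delicate piece and on elementary $\C^\delta$-truncation bounds for the rest, choosing $\varepsilon$ small enough that $\beta + 2\alpha - \varepsilon > 1$ so that the Young estimate applies to the final term, and then collecting the slowest of the resulting rates, namely $2^{-N(2\alpha+\beta-1-\varepsilon)}$.
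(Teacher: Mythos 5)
Your overall plan — reuse the five-term decomposition from the proof of Theorem~\ref{t:rough path integral}, estimate each difference in uniform norm, use Lemma~\ref{l:commutator speed of convergence} for the commutator piece and the Young estimate (with $\beta+2\alpha-\varepsilon>1$) for $I(f^v,\dd(L(S_Nv,S_Nv)-L(v,v)))$ — is exactly the paper's route, and your treatment of the last term and your reading of $w$ as a typo for $v$ are correct. But two of your steps do not go through as written. First, the commutator term: the correct move is to apply Lemma~\ref{l:commutator speed of convergence} with exponents $(\beta,\alpha,\alpha)$, i.e.\ $f^v\in\C^\beta$ and both remaining arguments equal to $v\in\C^\alpha$, which directly yields the bound $2^{-N(2\alpha+\beta-1)}\lVert f^v\rVert_\beta\lVert v\rVert_\alpha^2$. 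Your fallback of invoking it with exponents $(\alpha,\alpha,\alpha)$ and asserting that $\lVert f^v\rVert_\alpha$ ``is finite and dominated by $\lVert f\rVert_{v,\beta}$-dependent quantities'' is false: $f^v$ is only assumed to lie in $\C^\beta$, so for $\beta<\alpha$ the quantity $\lVert f^v\rVert_\alpha$ may be infinite, and it is in any case not controlled by $\lVert f\rVert_{v,\beta}=\lVert f^v\rVert_\beta+\lVert f^\sharp\rVert_{\alpha+\beta}$.

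Second, your closing heuristic — that $S$, $\pi_<$ and $L(f^\sharp,\cdot)$ individually only give the rate $2^{-N\alpha}$ and that the stated rate ``only emerges after combining'' the pieces through a cancellation as in~\eqref{e:commutator 1 pr3} — is not what happens, and it has the comparison backwards. Since $\beta\le\alpha\le 1/2$ one has $\alpha\ge 2\alpha+\beta-1$, so a rate $2^{-N(\alpha-\varepsilon)}$ for the symmetric part and the paraproduct is already at least as fast as the claimed $2^{-N(2\alpha+\beta-1-\varepsilon)}$; no cross-term cancellation is needed, and indeed the paper estimates each piece separately, noting exactly this inequality. For the remaining critical term $L(S_Nf^\sharp,S_Nv)-L(f^\sharp,v)$ the mechanism is not telescoping either, but bilinearity combined with Lemma~\ref{l:Levy area regularity} applied at the borderline regularity $1+\varepsilon$: one bounds $\lVert L(S_Nf^\sharp-f^\sharp,S_Nv)\rVert_{1+\varepsilon}\lesssim \lVert S_Nf^\sharp-f^\sharp\rVert_{1+\varepsilon-\alpha}\lVert v\rVert_\alpha\lesssim 2^{-N(\alpha+\beta-(1+\varepsilon-\alpha))}\lVert f^\sharp\rVert_{\alpha+\beta}\lVert v\rVert_\alpha$ and $\lVert L(f^\sharp,S_Nv-v)\rVert_{1+\varepsilon}\lesssim \lVert f^\sharp\rVert_{\alpha+\beta}\lVert S_Nv-v\rVert_{1+\varepsilon-\alpha-\beta}\lesssim 2^{-N(2\alpha+\beta-1-\varepsilon)}\lVert f^\sharp\rVert_{\alpha+\beta}\lVert v\rVert_\alpha$, and the $\C^{1+\varepsilon}$ norm dominates the uniform norm; trading regularity of the truncated argument for the geometric factor is what produces $2\alpha+\beta-1-\varepsilon$. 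As written, your proposal asserts this rate without a proof of this key term, so you should replace the cancellation argument by this direct estimate.
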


\begin{proof}
   We decompose $I(S_N f, \dd S_N v)$ as described in the proof of Theorem~\ref{t:rough path integral}. This gives us for example the term
   \[
      \| \pi_<(S_N f - f, S_N v) + \pi_<(f, S_N v - v)\|_\infty \lesssim_\varepsilon \| S_N f - f\|_\infty \| v \|_\alpha + \| f \|_\infty \| f \|_\alpha \|S_N v - v\|_\varepsilon
   \]
   for all $\varepsilon > 0$. From here it is easy to see that
   \[
      \| \pi_<(S_N f - f, S_N g) + \pi_<(f, S_N g - g)\|_\infty \lesssim 2^{-N(\alpha-\varepsilon)} \|f\|_\alpha \|v\|_\alpha \lesssim 2^{-N(\alpha-\varepsilon)} \|f\|_{v,\beta} (\|v\|_\alpha + \|v\|_\alpha^2).
   \]
   But now $\beta\le \alpha \le 1/2$ and therefore $\alpha \ge 2\alpha + \beta - 1$.

   Let us treat one of the critical terms, say $L(S_N f^\sharp, S_N v) - L(f^\sharp, v)$. Since $2 \alpha + \beta - \varepsilon > 1$, we can apply Lemma~\ref{l:Levy area regularity} to obtain
   \begin{align*}
      &\lVert L(S_N f^\sharp, S_N v) - L(f^\sharp, v) \rVert_\infty \lesssim \lVert L(S_N f^\sharp - f^\sharp, S_N v)\rVert_{1+\varepsilon} + \lVert L(f^\sharp, S_N v - v)\rVert_{1+\varepsilon}\\
      &\hspace{120pt}\lesssim_\varepsilon \lVert S_N f^\sharp - f^\sharp\rVert_{1+\varepsilon - \alpha} \lVert v \rVert_\alpha + \lVert f^\sharp\rVert_{\alpha+\beta} \lVert S_N v - v\rVert_{1+\varepsilon - \alpha-\beta} \\
      &\hspace{120pt}\lesssim 2^{-N(\alpha + \beta - (1 + \varepsilon - \alpha))} \lVert f^\sharp\rVert_{\alpha+\beta} \lVert v \rVert_\alpha + 2^{-N(\alpha - (1+\varepsilon - \alpha-\beta))} \lVert f^\sharp\rVert_{\alpha+\beta} \lVert v \rVert_\alpha \\
      &\hspace{120pt}\lesssim 2^{-N(2\alpha + \beta - 1 -\varepsilon)}\lVert f^\sharp \rVert_{\alpha+\beta} \lVert v \rVert_\alpha.
   \end{align*}
   Lemma~\ref{l:commutator speed of convergence} gives
   \begin{align*}
      \lVert L(S_N \pi_<(f^v,v), S_N v) - L(\pi_<(f^v,v),v) \rVert_\infty &\lesssim 2^{-N(2 \alpha + \beta - 1)} \lVert f^v \rVert_\beta \lVert v\rVert_\alpha^2\\
      &\quad + \lVert I(f^v, \dd L(S_N v, S_N v)) - I(f^v, \dd L(v,v))\rVert_\infty.
   \end{align*}
   The second term on the right hand side can be estimated using the continuity of the Young integral, and the proof is complete.
\end{proof}

\begin{rmk}
   In Lemma~\ref{l:commutator speed of convergence} we saw that the rate of convergence of
   \[
      L(S_N \pi_<(f^v,v),S_N v) - I(f^v, \dd L(S_Nv, S_Nv)) - (L(\pi_<(f^v,v),v) - I(f^v, \dd L(v,v)))
   \]
   is in fact $2^{-N(2\alpha+\beta - 1)}$ when measured in uniform distance, and not just $2^{-N(2\alpha +\beta- 1 -\varepsilon)}$. It is possible to show that this optimal rate is attained by the other terms as well, so that
   \begin{align*}
      \lVert I(S_N f, \dd S_N v) - I(f,\dd v)\rVert_\infty &\lesssim 2^{-N(2\alpha + \beta - 1)} \lVert f\rVert_{v,\beta} \bigl(\lVert v \rVert_\alpha + \lVert v \rVert_\alpha^2 \bigr)\\
      &\qquad +\lVert f^v \rVert_\beta \lVert L(S_N v, S_N w) - L(v,w)\rVert_{2\alpha - \varepsilon}.
   \end{align*}
   Since this requires a rather lengthy calculation, we decided not to include the arguments here.
\end{rmk}

Since we approximate $f$ and $g$ by the piecewise smooth functions $S_N f$ and $S_N g$ when defining the integral $I(f,\dd g)$, it is not surprising that we obtain a Stratonovich type integral:

\begin{prop}\label{p:ibp stratonovich}
   Let $\alpha \in (1/3,1)$ and $v \in \C^\alpha(\R^d)$. Let $\varepsilon > 0$ be such that $(2+\varepsilon)\alpha > 1$ and let $F \in C^{2+\varepsilon}(\R^d,\R)$. Then
   \begin{align*}
      F(v(t)) - F(v(0)) = I(\dD F(v),\dd v)(t) := \lim_{N\rightarrow \infty} I(S_N \dD F(v), \dd S_N v)(t)
   \end{align*}
   for all $t \in [0,1]$.
\end{prop}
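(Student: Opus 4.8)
The plan is to avoid the machinery of Sections~\ref{s:paradifferential calculus}--\ref{s:schauder rough path integral} entirely and to argue by a direct, elementary estimate comparing, for each fixed $N$, the piecewise affine function $S_N\dD F(v)$ with the function $\dD F(S_N v)$ (which is affine composed with $\dD F$ on each dyadic piece). We may assume $\varepsilon<1$: since $\alpha>1/3$ there is a value $\varepsilon'\in(0,1)$ with $(2+\varepsilon')\alpha>1$, and $F\in C^{2+\varepsilon}\subseteq C^{2+\min(\varepsilon,\varepsilon')}$. For fixed $N$ both $S_N\dD F(v)$ and $S_N v$ are piecewise affine, so $I(S_N\dD F(v),\dd S_N v)$ is simply the Riemann--Stieltjes integral $\int_0^\cdot (S_N\dD F(v))(s)\,\dd(S_N v)(s)$ (immediate from the definition $I(\cdot,\dd\cdot)=\sum_{p,q}\int\Delta_p\cdot\,\dd\Delta_q\cdot$ and linearity of the Riemann--Stieltjes integral). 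Since $F\in C^1$ and $S_N v$ is piecewise $C^1$, the fundamental theorem of calculus gives $\int_0^t(\dD F(S_N v))(s)\,\dd(S_N v)(s)=F(S_N v(t))-F(S_N v(0))=F(S_N v(t))-F(v(0))$, using $S_N v(0)=v(0)$ (Lemma~\ref{l:schauder functions give linear interpolation}). Hence
\[
   I(S_N\dD F(v),\dd S_N v)(t)=F(S_N v(t))-F(v(0))+R_N(t),\qquad R_N(t):=\int_0^t A_N(s)\,\dd(S_N v)(s),
\]
with $A_N:=S_N\dD F(v)-\dD F(S_N v)$. Because $S_N v\to v$ uniformly (Lemma~\ref{l:schauder functions give linear interpolation}) and $F$ is continuous, $F(S_N v(t))\to F(v(t))$, so everything reduces to showing $R_N\to0$ uniformly.

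The key observation is that $S_N g$ is the affine interpolation of $g$ at the uniform dyadic grid $G_N=\{k2^{-N-1}:0\le k\le 2^{N+1}\}$ (indeed $\{t^1_{pm}:0\le p\le N,\,1\le m\le 2^p\}\cup\{0,1\}$ is exactly $G_N$). On a grid interval $J=[a,b]$, with $|J|=2^{-N-1}$, \emph{both} $S_N\dD F(v)|_J$ and $\dD F(S_N v)|_J$ take the value $\dD F(v(a))$ at $a$ and $\dD F(v(b))$ at $b$ (the second because $S_N v(a)=v(a)$ and $S_N v(b)=v(b)$), so $S_N\dD F(v)|_J$ is precisely the affine interpolant on $J$ of $g:=\dD F(S_N v)|_J$; thus $\|A_N\|_{L^\infty(J)}$ is the affine interpolation error of $g$. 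Now $g=\dD F\circ(\text{affine map of slope }c_J)$ with $c_J=(v(b)-v(a))/(b-a)$, so $\dD g(s)=\dD^2 F(S_N v(s))\,c_J$ and $[\dD g]_{C^\varepsilon(J)}\le [\dD^2 F]_{C^\varepsilon(K)}|c_J|^{1+\varepsilon}$, where $K:=\mathrm{conv}(v([0,1]))$ is a fixed compact set containing the ranges of all $S_N v$. A first-order Taylor expansion of $\dD g$ around $a$ yields the interpolation-error bound $\|A_N\|_{L^\infty(J)}\lesssim |J|^{1+\varepsilon}[\dD g]_{C^\varepsilon(J)}$, hence
\[
   \|A_N\|_{L^\infty(J)}\,|c_J|\,|J|\ \lesssim_{F,v}\ (|c_J|\,|J|)^{2+\varepsilon}=|v(b)-v(a)|^{2+\varepsilon}\le\|v\|_\alpha^{2+\varepsilon}\,|J|^{\alpha(2+\varepsilon)}.
\]

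Finally, since $\dd(S_N v)=c_J\,\dd s$ on $J$, summing over the $2^{N+1}$ grid intervals gives
\[
   \|R_N\|_\infty\le\sum_J\|A_N\|_{L^\infty(J)}\,|c_J|\,|J|\ \lesssim_{F,v}\ \|v\|_\alpha^{2+\varepsilon}\cdot 2^{N+1}\cdot\big(2^{-N-1}\big)^{\alpha(2+\varepsilon)}=\|v\|_\alpha^{2+\varepsilon}\,2^{(N+1)(1-\alpha(2+\varepsilon))}\ \to\ 0,
\]
because $\alpha(2+\varepsilon)>1$. Together with the reduction of the first paragraph this shows $I(S_N\dD F(v),\dd S_N v)(t)\to F(v(t))-F(v(0))$ for every $t$, which is the assertion. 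The one step that requires an idea is the identification in the second paragraph: recognizing that $S_N\dD F(v)$ and $\dD F(S_N v)$ agree on the interpolation grid $G_N$, so that $A_N$ restricted to each sub-interval $J$ is a genuine affine-interpolation error of a $C^{1+\varepsilon}$ function, whose contribution to $R_N$ is thus of order $|v(b)-v(a)|^{2+\varepsilon}\lesssim|J|^{\alpha(2+\varepsilon)}$; the exponent $\alpha(2+\varepsilon)>1$ is exactly what makes the sum over the $\sim 2^N$ sub-intervals vanish in the limit. A mild technical point to watch is that all the composition and interpolation estimates for $F$ involve only $F$ restricted to the fixed compact set $K$, so no global boundedness of $F$ or its derivatives is required.
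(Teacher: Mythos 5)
Your argument is correct, and it shares the paper's skeleton up to the decisive estimate: both proofs start from the exact chain rule for the level-$N$ approximation (so that everything reduces to showing that the contribution of $A_N = S_N \dD F(v) - \dD F(S_N v)$ to the integral vanishes), and both exploit the same cancellation, namely that $S_N \dD F(v)$ and $\dD F(S_N v)$ agree at all dyadic points of level $\le N+1$ — in the paper this appears as $\Delta_p \dD F(S_N v) = \Delta_p S_N \dD F(v)$ for $p \le N$, in your write-up as the observation that $A_N|_J$ is a genuine affine-interpolation error on each grid interval $J$. Where you genuinely differ is in the conclusion: the paper converts the Taylor/H\"older bound on the high-frequency Schauder coefficients of $A_N$ into the statement $\lVert A_N \rVert_{\alpha(1+\varepsilon')} \to 0$ and then invokes the continuity of the Young integral (Theorem~\ref{t:young integral}, using $(2+\varepsilon')\alpha>1$ and the uniform bound on $\lVert S_N v\rVert_\alpha$), whereas you bypass Young integration entirely and estimate the Riemann--Stieltjes error $\int_0^\cdot A_N \,\dd S_N v$ directly, interval by interval, using the interpolation-error bound $\lVert A_N\rVert_{L^\infty(J)} \lesssim |J|^{1+\varepsilon}[\dD^2F]_{C^\varepsilon(K)}|c_J|^{1+\varepsilon}$ and summability of $|J|^{\alpha(2+\varepsilon)}$ over the $2^{N+1}$ grid intervals. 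The paper's route is shorter given the machinery already in place and yields convergence in a H\"older-type topology; yours is self-contained (no appeal to Section~\ref{s:young}), makes the role of the hypothesis $\alpha(2+\varepsilon)>1$ completely transparent, and produces an explicit uniform rate $2^{-N(\alpha(2+\varepsilon)-1)}$ for the error term. Two small points you implicitly use and could state explicitly: that $\Delta_p(S_N u) = \Delta_p u$ for $p\le N$ and $=0$ for $p>N$, which is what identifies $I(S_N \dD F(v), \dd S_N v)$ with the finite Riemann--Stieltjes integral, and the (correct) reduction to $\varepsilon<1$ together with the remark that only $F$ restricted to the compact set $K$ matters; neither is a gap.
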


\begin{proof}
   The function $S_N v$ is Lipschitz continuous, so that integration by parts gives
   \begin{align*}
      F(S_N v(t)) - F(S_N v(0)) = I(\dD F(S_N v), \dd S_N v)(t).
   \end{align*}
   The left hand side converges to $F(v(t)) - F(v(0))$. It thus suffices to show that $I(S_N \dD F(v)-\dD F(S_N v), \dd S_N v)$ converges to zero. By continuity of the Young integral, Theorem~\ref{t:young integral}, it suffices to show that $\lim_{N\rightarrow \infty} \lVert S_N \dD F(v) - \dD F(S_N v)\rVert_{\alpha(1+\varepsilon')} = 0$ for all $\varepsilon' < \varepsilon$. Recall that $S_N v$ is the linear interpolation of $v$ between the points $(t^1_{pm})$ for $p \le N$ and $0 \le m \le 2^p$, and therefore $\Delta_p \dD F(S_Nv) = \Delta_p \dD F(v) = \Delta_p S_N \dD F(v)$ for all $p \le N$. For $p > N$ and $1 \le m \le 2^p$ we apply a first order Taylor expansion to both terms and use the $\varepsilon$--H\"older continuity of $\dD^2 F$ to obtain
   \begin{align*}
      \left|[S_N \dD F(v) - \dD F(S_N v)]_{pm}\right| & \le C_F 2^{-p\alpha(1+\varepsilon)} \lVert S_N v \rVert_\alpha
   \end{align*}
   for a constant $C_F>0$. Therefore, we get for all $\varepsilon' \le \varepsilon$
   \begin{align*}
      \lVert S_N \dD F(v) - \dD F(S_Nv)\rVert_{\alpha(1+ \varepsilon')} \lesssim_F 2^{-N\alpha(\varepsilon-\varepsilon')} \lVert v \rVert_\alpha,
   \end{align*}
   which completes the proof.
\end{proof}

\begin{rmk}\label{r:symmetric structure induces cancellations stratonovich}
   Note that here we did not need any assumption on the area $L(v,v)$. The reason are cancellations that arise due to the symmetric structure of the derivative of $\dD F$, the Hessian of $F$. 

   Proposition~\ref{p:ibp stratonovich} was previously obtained by Roynette~\cite{Roynette1993}, except that there $v$ is assumed to be one dimensional and in the Besov space $B^{1/2}_{1,\infty}$.
\end{rmk}

\section{Pathwise It\^{o} integration}\label{s:pathwise ito}

In the previous section we saw that our pathwise integral $I(f,\dd v)$ is of Stratonovich type, i.e. it satisfies the usual integration by parts rule. But in applications it may be interesting to have an It\^{o} integral. Here we show that a slight modification of $I(f,\dd v)$ allows us to treat non-anticipating It\^{o}-type integrals.

A natural approximation of a non-anticipating integral is given for $k \in \N$ by
\begin{align*}
   \ito_k (f,\dd v) (t) :=\, & \sum_{m=1}^{2^k} f(t^0_{km}) (v(t^2_{km}\wedge t) - v(t^0_{km}\wedge t))\\
   =\, & \sum_{m=1}^{2^k} \sum_{p,q} \sum_{m,n} f_{pm} v_{qn} \varphi_{pm}(t^0_{km}) (\varphi_{qn}(t^2_{km}\wedge t) - \varphi_{qn}(t^0_{km}\wedge t)).
\end{align*}
Let us assume for the moment that $t=m2^{-k}$ for some $0 \le m \le 2^k$. In that case we obtain for $p \ge k$ or $q \ge k$ that $\varphi_{pm}(t^0_{km})(\varphi_{qn}(t^2_{km}\wedge t) - \varphi_{qn}(t^0_{km}\wedge t)) = 0$. For $p,q<k$, both $\varphi_{pm}$ and $\varphi_{qn}$ are affine functions on $[t^0_{km}\wedge t, t^2_{k m}\wedge t]$, and for affine $u$ and $w$ and $s<t$ we have
\begin{align*}
   u(s)(w(t) - w(s)) = \int_s^t u(r) \dd w(r) - \frac{1}{2} [u(t) - u(s)] [w(t) - w(s)].
\end{align*}
Hence, we conclude that for $t=m2^{-k}$
\begin{align}\label{e:ito via piecewise linear}
   \ito_k (f,\dd v)(t) = I(S_{k-1} f, \dd S_{k-1} v)(t) - \frac{1}{2}[f,v]_k(t),
\end{align}
where $[f,v]_k$ is the $k$--th dyadic approximation of the quadratic covariation $[f,v]$, i.e.
\begin{align*}
   [f,v]_k(t) := \sum_{m=1}^{2^k} [f(t^2_{km}\wedge t) - f(t^0_{km}\wedge t)][v(t^2_{km}\wedge t) - v(t^0_{km}\wedge t)].
\end{align*}
From now on we study the right hand side of~\eqref{e:ito via piecewise linear} rather than $\ito_k(f,\dd v)$, which is justified by the following remark.

\begin{rmk}\label{r:our ito vs nonanticipating riemann}
   Let $\alpha \in (0,1)$. If $f\in C([0,1])$ and $v\in \C^\alpha$, then
   \begin{align*}
      \Bigl\lVert \ito_k(f,\dd v) - \Bigl(I(S_{k-1} f,\dd S_{k-1}v) - \frac{1}{2} [S_{k-1} f, S_{k-1}v]_k \Bigr) \Bigr\rVert_\infty \lesssim 2^{-k\alpha} \lVert f \rVert_\infty \lVert v \rVert_\alpha.
   \end{align*}
   This holds because both functions agree in all dyadic points of the form $m2^{-k}$, and because between those points the integrals can pick up mass of at most $\lVert f \rVert_\infty 2^{-k\alpha} \lVert v \rVert_\alpha$.
\end{rmk}

%
%
%
%
%
%

We write $[v,v] := ([v^i, v^j])_{1 \le i, j \le d}$ and $L(v,v) := (L(v^i, v^j))_{1\le i,j\le d}$, and similarly for all expressions of the same type.

\begin{thm}\label{t:pathwise ito integral}
   Let $\alpha \in (0,1/2)$ and let $\beta\le \alpha$ be such that $2\alpha + \beta > 1$. Let $v\in \C^\alpha(\R^d)$ and $f \in \D^\beta_v(\L(\R^d;\R^n))$. Assume that $(L(S_k v, S_k v))$ converges uniformly, with uniformly bounded $\C^{2\alpha}$ norm. Also assume that $([v,v]_k)$ converges uniformly. Then $(\ito_k(f,\dd v))$ converges uniformly to a limit $\ito(f,\dd v) = I(f,\dd v) - 1/2[f,v]$ which satisfies
   \begin{align*}
      \lVert \ito(f,\dd v)\rVert_\infty \lesssim \lVert f\rVert_{v,\beta} (\lVert v \rVert_\alpha + \lVert v \rVert_\alpha^2 + \lVert L(v,v) \rVert_{2\alpha} + \lVert[v,v]\rVert_\infty), 
   \end{align*}
   and where the quadrativ variation $[f,v]$ is given by
   \begin{equation}\label{e:quadratic variation controlled explicit}
      [f,v] = \int_0^\cdot f^{v}(s) \dd [v,v](s) := \bigg( \sum_{j,\ell=1}^d\int_0^\cdot (f^{ij})^{v,\ell}(s) \dd [v^j,v^\ell](s)\bigg)_{1\le i \le n},
   \end{equation}
   where $(f^{ij})^{v,\ell}$ is the $\ell$--th component of the $v$--derivative of $f^{ij}$.
   For $\varepsilon \in (0,3\alpha-1)$ the speed of convergence can be estimated by
   \begin{align*}
      \big\lVert \ito(f,\dd v) - \ito_k(f,\dd v) \big\rVert_\infty & \lesssim_\varepsilon 2^{-k(2\alpha + \beta - 1 - \varepsilon)} \lVert f\rVert_{v,\beta} \bigl( \lVert v \rVert_\alpha + \lVert v \rVert_\alpha^2 \bigr)\\
      &\quad +\lVert f^v \rVert_\beta \lVert L(S_{k-1} v, S_{k-1} v) - L(v,v)\rVert_{2\alpha} \\
      &\quad + \lVert f^v \rVert_\infty \lVert [v,v]_k - [v,v]\rVert_{\infty}.
   \end{align*}
\end{thm}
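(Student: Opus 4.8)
The plan is to base everything on the identity~\eqref{e:ito via piecewise linear} and Remark~\ref{r:our ito vs nonanticipating riemann}, which together reduce the whole statement to the analysis of the two pieces $I(S_{k-1}f,\dd S_{k-1}v)$ and $[f,v]_k$. Indeed, since $S_{k-1}f$ and $S_{k-1}v$ agree with $f$ and $v$ at all dyadic points of level $k$, the discrete quadratic variations $[S_{k-1}f,S_{k-1}v]_k$ and $[f,v]_k$ coincide up to a uniform error of order $2^{-k\alpha}$, and Remark~\ref{r:our ito vs nonanticipating riemann} gives
\[
   \Bigl\lVert\ito_k(f,\dd v)-\Bigl(I(S_{k-1}f,\dd S_{k-1}v)-\tfrac12[f,v]_k\Bigr)\Bigr\rVert_\infty\lesssim 2^{-k\alpha}\lVert f\rVert_\infty\lVert v\rVert_\alpha .
\]
So it is enough to show that $I(S_{k-1}f,\dd S_{k-1}v)$ and $[f,v]_k$ converge uniformly, with the appropriate bounds and rates. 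For the first piece there is nothing new to do: the hypotheses $\beta\le\alpha<1/2$ and $2\alpha+\beta>1$ force $\alpha\in(1/3,1/2)$ and $\alpha+\beta<1$, so Theorem~\ref{t:rough path integral} applies with $N=k-1$ and gives convergence of $I(S_{k-1}f,\dd S_{k-1}v)$ in $\C^{\alpha-\varepsilon}$ to $I(f,\dd v)\in\D^\alpha_v$, together with the bound $\lVert I(f,\dd v)\rVert_\alpha\lesssim\lVert f\rVert_{v,\beta}(\lVert v\rVert_\alpha+\lVert v\rVert_\alpha^2+\lVert L(v,v)\rVert_{2\alpha})$; the corresponding rate of uniform convergence is exactly the content of Corollary~\ref{c:rough path speed of convergence}.

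The core of the argument is the quadratic variation term $[f,v]_k$. First one uses the paracontrolled structure of $f$: by Example~\ref{ex:controlled old vs new}, $f_{s,t}=f^v(s)v_{s,t}+r_{s,t}$ with $\lvert r_{s,t}\rvert\lesssim\lvert t-s\rvert^{\alpha+\beta}\lVert f\rVert_{v,\beta}(1+\lVert v\rVert_\alpha)$. Inserting this into the definition of $[f,v]_k$ and reading the result in components yields
\[
   [f,v]_k(t)=\sum_{m=1}^{2^k}f^v(t^0_{km})\,v_{t^0_{km},\,t^2_{km}\wedge t}^{\otimes 2}+R_k(t),\qquad\lVert R_k\rVert_\infty\lesssim 2^{-k(2\alpha+\beta-1)}\lVert f\rVert_{v,\beta}(1+\lVert v\rVert_\alpha)\lVert v\rVert_\alpha ,
\]
where the error is summable in $k$ precisely because $2\alpha+\beta>1$ (each of the $2^k$ summands making up $R_k$ is of order $2^{-k(\alpha+\beta)}2^{-k\alpha}$). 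The main term $J_k(t):=\sum_m f^v(t^0_{km})\,v_{t^0_{km},t^2_{km}\wedge t}^{\otimes 2}$ is a left-point Riemann--Stieltjes sum of the continuous function $f^v$ against $[v,v]_k$, since $v_{t^0_{km},t^2_{km}}^{\otimes 2}$ is precisely the increment of $[v,v]_k$ over the $m$-th dyadic interval. The decisive observation is that each diagonal entry $[v^i,v^i]_k$ is nondecreasing along the dyadic grid — a partial sum of squares — so that by polarization $[v,v]_k$, hence also its uniform limit $[v,v]$, is of bounded variation with total variation $\lesssim\sum_i[v^i,v^i]_k(1)\lesssim\lVert[v,v]\rVert_\infty$, uniformly in $k$. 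From this I would conclude, on the one hand, $\lVert J_k\rVert_\infty\lesssim\lVert f^v\rVert_\infty\lVert[v,v]\rVert_\infty$ uniformly — which, with the bound on $R_k$ and the one from Theorem~\ref{t:rough path integral}, gives the asserted bound on $\lVert\ito(f,\dd v)\rVert_\infty$ — and, on the other hand, that $J_k$ converges uniformly to the Lebesgue--Stieltjes integral $\int_0^\cdot f^v(s)\,\dd[v,v](s)$, which in components is the right-hand side of~\eqref{e:quadratic variation controlled explicit}; the latter uses the classical convergence of left-point Riemann--Stieltjes sums of a continuous integrand against a bounded-variation integrator, the uniform convergence $\lVert[v,v]_k-[v,v]\rVert_\infty\to 0$, and an approximation of $f^v$ by smooth functions to handle the simultaneous refinement of the sampling grid and change of integrator (the resulting errors being of order $\omega_{f^v}(2^{-k})\lesssim 2^{-k\beta}$ from the grid and $\lVert f^v\rVert_\infty\lVert[v,v]_k-[v,v]\rVert_\infty$ from the integrator). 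Hence $[f,v]_k\to[f,v]:=\int_0^\cdot f^v\,\dd[v,v]$ uniformly, and assembling the displayed estimates together with the $2^{-k\alpha}$ term from Remark~\ref{r:our ito vs nonanticipating riemann} and the rate from Corollary~\ref{c:rough path speed of convergence} (using $\alpha\le 1/2$, which gives both $\alpha\ge 2\alpha+\beta-1$ and $\beta\ge 2\alpha+\beta-1$, so those rates dominate $2^{-k(2\alpha+\beta-1)}$) produces the claimed speed of convergence.

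The main obstacle is the convergence of $[f,v]_k$ and the identification of the limit with $\int f^v\,\dd[v,v]$: since no H\"older regularity of $[v,v]$ is assumed, this cannot be a Young integral, and one has to exploit the monotonicity of the diagonal entries $[v^i,v^i]_k$ to get uniform bounded-variation bounds, combining this with the paracontrolled expansion of $f$ — the margin $2\alpha+\beta>1$ being exactly what makes every remainder term summable.
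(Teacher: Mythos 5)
Your proposal is correct and follows essentially the same route as the paper's own proof: the reduction via \eqref{e:ito via piecewise linear} and Remark~\ref{r:our ito vs nonanticipating riemann} to $I(S_{k-1}f,\dd S_{k-1}v)-\tfrac12[f,v]_k$, Theorem~\ref{t:rough path integral} and Corollary~\ref{c:rough path speed of convergence} for the integral part, and, for $[f,v]_k$, the paracontrolled expansion $f_{s,t}=f^v(s)v_{s,t}+R_{s,t}$ with a remainder of order $2^{-k(2\alpha+\beta-1)}$ followed by polarization of $v^j\,v^\ell$ into squared increments, so that the main term is read as an integral of $f^v$ against (differences of) monotone approximations of $[v,v]$ — exactly the paper's argument. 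Your extra smoothing of $f^v$ when identifying the limit only adds an error of order $\omega_{f^v}(2^{-k})\lesssim 2^{-k\beta}\le 2^{-k(2\alpha+\beta-1)}$, so the stated bounds and convergence rate are unaffected.
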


\begin{proof}
   By Remark~\ref{r:our ito vs nonanticipating riemann}, it suffices to show our claims for $I(S_{k-1} f, \dd S_{k-1} v) -1/2[f,v]_k$. The statements for the integral $I(S_{k-1} f, \dd S_{k-1} g)$ follow from Theorem~\ref{t:rough path integral} and Corollary~\ref{c:rough path speed of convergence}. So let us us concentrate on the quadratic variation $[f,v]_k$. Recall from Example~\ref{ex:controlled old vs new} that $f \in \D^\beta_v$ if and only if $R^f_{s,t} = f_{s,t} - f^v(s) w_{s,t}$ satisfies $|R^f_{s,t}| \lesssim |t-s|^{\alpha+\beta}$. Hence
   \begin{align*}
      [f,v]^i_k (t) & = \sum_m \big(f_{t^0_{km} \wedge t, t^2_{km} \wedge t} v_{t^0_{km} \wedge t, t^2_{km} \wedge t}\big)^i\\
      & = \sum_m \big(R^f_{t^0_{km} \wedge t, t^2_{km} \wedge t} v_{t^0_{km} \wedge t, t^2_{km} \wedge t}\big)^i + \sum_{j,\ell=1}^d \sum_m (f^{ij})^{v,\ell}(t^0_{km} \wedge t) v^\ell_{t^0_{km} \wedge t, t^2_{km} \wedge t} v^j_{t^0_{km} \wedge t, t^2_{km} \wedge t}.
   \end{align*}
   It is easy to see that the first term on the right hand side is bounded by
   \[
      \Big| \sum_m \big(R^f_{t^0_{km} \wedge t, t^2_{km} \wedge t} v_{t^0_{km} \wedge t, t^2_{km} \wedge t}\big)^i \Big| \lesssim 2^{-k(2\alpha+\beta-1)} \lVert f \rVert_{v,\beta}(\lVert v \rVert_\alpha + \lVert v \rVert_\alpha^2).
   \]
   For the second term, let us fix $\ell$ and $j$. Then the sum over $m$ is just the integral of $(f^{ij})^{v,\ell}$ with respect to the signed measure $\mu^k_t = \sum_{m} \delta_{t^0_{km}} v^j_{t^0_{km} \wedge t, t^2_{km} \wedge t} v^\ell_{t^0_{km} \wedge t, t^2_{km} \wedge t}$. Decomposing $\mu^k_t$ into a positive and negative part as
   \begin{align*}
      \mu^k_t & = \frac{1}{4} \Big[\sum_m  \delta_{t^0_{km}} [(v^j+v^\ell)_{t^0_{km} \wedge t, t^2_{km} \wedge t}]^2 -\sum_m  \delta_{t^0_{km}} [(v^j - v^\ell)_{t^0_{km} \wedge t, t^2_{km} \wedge t}]^2\Big]
   \end{align*}
   and similarly for $\dd \mu_t = \dd [v^j,v^\ell]_t$
   we can estimate
   \begin{align*}
      &\Big| \int_0^1 (f^{ij})^{v,\ell}(s) \mu^k_t(\dd s) - \int_0^1 (f^{ij})^{v,\ell}(s) \mu_t(\dd s) \Big| \\
      &\hspace{100pt} \lesssim \left\lVert f^v \right\rVert_\infty \left(\left\lVert [v^i+v^j]_k - [v^i + v^j]\right\rVert_\infty + \left\lVert [v^i-v^j]_k - [v^i - v^j]\right\rVert_\infty\right)\\
      &\hspace{100pt} \lesssim \left\lVert f^v \right\rVert_\infty \lVert [v,v]_k - [v,v]\rVert_\infty,
   \end{align*}
   where we write $[u] := [u,u]$ and similarly for $[u]_k$. By assumption the right hand side converges to zero, from where we get the uniform convergence of $[f,g]_k$ to $[f,g]$.
%
%
\end{proof}

\begin{rmk}
   We calculate the pathwise It\^o integral $\ito(f,\dd v)$ as limit of nonanticipating Riemann sums involving only $f$ and $v$. This is interesting for applications of mathematical finance, because the integral process has a natural interpretation as capital obtained from investing. The classical rough path integral, see Proposition~\ref{p:Gubinelli rough paths}, is obtained via ``compensated Riemann sums'' that explicitly depend on $f^v$ and $\ito(v,\dd v)$.
\end{rmk}

\begin{rmk}
   We calculate the pathwise It\^{o} integral $\ito(f,\dd v)$ as limit of nonanticipating Riemann sums involving only $f$ and $v$. 
   The classical rough path integral, see Proposition~\ref{p:Gubinelli rough paths}, is obtained via ``compensated Riemann sums'' that depend explicitly on the derivative $f^v$ and the iterated integrals of $v$. For applications in mathematical finance, it is more convenient to have an integral that is the limit of nonanticipating Riemann sums, because this can be interpreted as capital process obtained from investing.
\end{rmk}

Note that $[v,v]$ is always a continuous function of bounded variation, but a priori it is not clear whether it is in $\C^{2\alpha}$. Under this additional assumption we have the following stronger result.

\begin{cor}\label{c:pathwise ito with smooth quadratic variation}
   In addition to the conditions of Theorem~\ref{t:pathwise ito integral}, assume that also $[v,v] \in \C^{2\alpha}$. Then $\ito(f,\dd v) \in \D^\alpha_v$ with derivative $f$, and
   \begin{align*}
      \lVert \ito(f,\dd v) \rVert_{v,\alpha} \lesssim \lVert f \rVert_{v,\beta} \bigl(1 + \lVert v \rVert_\alpha^2+ \lVert L(v,v) \rVert_{2\alpha} + \lVert [v,v]\rVert_{2\alpha} \bigr).
   \end{align*}
\end{cor}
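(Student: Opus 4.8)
The plan is to reduce everything to results already in hand. By Theorem~\ref{t:pathwise ito integral} we know that $\ito(f,\dd v) = I(f,\dd v) - \tfrac12 [f,v]$, where $[f,v]$ is given componentwise by the Young integral formula~\eqref{e:quadratic variation controlled explicit}. Theorem~\ref{t:rough path integral} already tells us that the first summand $I(f,\dd v)$ lies in $\D^\alpha_v$ with derivative $f$ and satisfies $\lVert I(f,\dd v)\rVert_{v,\alpha}\lesssim\lVert f\rVert_{v,\beta}(1+\lVert v\rVert_\alpha^2+\lVert L(v,v)\rVert_{2\alpha})$; note that the standing hypothesis $2\alpha+\beta>1$ forces $\alpha>1/3$ and $\alpha+\beta<1$, so that theorem indeed applies. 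Hence the entire task is to control the quadratic variation correction $[f,v]$ and to check that subtracting it does not destroy membership in $\D^\alpha_v$.

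For that, the key observation is that $[f,v]$ is an honest Young integral to which Theorem~\ref{t:young integral} applies with the pair of Hölder exponents $(\beta,2\alpha)$: each scalar integrand $(f^{ij})^{v,\ell}$ lies in $\C^\beta(\R)$, each integrator $[v^j,v^\ell]$ lies in $\C^{2\alpha}(\R)$ by the new hypothesis, and $\beta,2\alpha\in(0,1)$ with $\beta+2\alpha>1$. Since a Young integral inherits the regularity of its integrator (the $\C^\beta$-bound of Theorem~\ref{t:young integral}, read with the exponent roles exchanged), we get $[f,v]\in\C^{2\alpha}(\R^n)$ with $\lVert[f,v]\rVert_{2\alpha}\lesssim\lVert f^v\rVert_\beta\lVert[v,v]\rVert_{2\alpha}\le\lVert f\rVert_{v,\beta}\lVert[v,v]\rVert_{2\alpha}$. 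Because $\beta\le\alpha$ we have $\C^{2\alpha}\subseteq\C^{\alpha+\beta}\subseteq\C^\alpha$, so $[f,v]$ is trivially paracontrolled by $v$ with derivative $0$ and remainder $[f,v]^\sharp=[f,v]$. Therefore $\ito(f,\dd v)\in\D^\alpha_v$ with derivative $f-0=f$ and remainder $\ito(f,\dd v)^\sharp=I(f,\dd v)^\sharp-\tfrac12[f,v]$. Assembling the norm is then bookkeeping: $\lVert\ito(f,\dd v)\rVert_{v,\alpha}=\lVert f\rVert_\alpha+\lVert\ito(f,\dd v)^\sharp\rVert_{2\alpha}\le\lVert f\rVert_\alpha+\lVert I(f,\dd v)\rVert_{v,\alpha}+\tfrac12\lVert[f,v]\rVert_{2\alpha}$, and one bounds $\lVert f\rVert_\alpha\lesssim\lVert f\rVert_{v,\beta}(1+\lVert v\rVert_\alpha)$ by writing $f=\pi_<(f^v,v)+f^\sharp$ and invoking Lemma~\ref{l:paraproduct definition}. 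Plugging in the two estimates above and absorbing the harmless linear term via $\lVert v\rVert_\alpha\le\tfrac12(1+\lVert v\rVert_\alpha^2)$ gives exactly the claimed inequality.

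I do not expect a genuine obstacle here: the corollary is a direct consequence of Theorem~\ref{t:pathwise ito integral}, Theorem~\ref{t:rough path integral}, and Theorem~\ref{t:young integral}. The only step needing a moment's care is verifying that the Young integral defining $[f,v]$ is admissible — i.e. that the exponents $(\beta,2\alpha)$ satisfy the hypotheses of Theorem~\ref{t:young integral}, in particular $\beta+2\alpha>1$, which is precisely the standing assumption of Theorem~\ref{t:pathwise ito integral} — together with keeping the vector/matrix indices straight so that~\eqref{e:quadratic variation controlled explicit} genuinely delivers an element of $\C^{2\alpha}(\R^n)$ with the stated norm bound.
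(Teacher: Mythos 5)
Your argument is correct and follows exactly the paper's route: the paper's proof is precisely the combination of Theorem~\ref{t:rough path integral} for $I(f,\dd v)$, the representation \eqref{e:quadratic variation controlled explicit} of $[f,v]$ as $\int_0^\cdot f^v\,\dd[v,v]$, and the continuity of the Young integral (Theorem~\ref{t:young integral}) applied with exponents $(\beta,2\alpha)$, which you spell out in detail, including the correct verification that $2\alpha+\beta>1$ and $2\alpha\in(0,1)$ make the Young bound applicable and that the $\C^{2\alpha}$ correction $[f,v]$ can be absorbed into the remainder without changing the derivative $f$. The only cosmetic point is that your final norm chain counts $\lVert f\rVert_\alpha$ twice, which is harmless since the inequality goes the right way.
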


\begin{proof}
   This is a combination of Theorem~\ref{t:rough path integral} and the explicit representation \eqref{e:quadratic variation controlled explicit} together with the continuity of the Young integral, Theorem~\ref{t:young integral}.
\end{proof}

The term $I(S_{k-1}f,\dd S_{k-1}v)$ has the pleasant property that if we want to refine our calculation by passing from $k$ to $k+1$, then we only have to add the additional term $I(S_{k-1}f, \dd \Delta_k v) + I(\Delta_k f, \dd S_k v)$. For the quadratic variation $[f,v]_k$ this is not exactly true. But $[f,v]_k(m2^{-k}) = [S_{k-1}f,S_{k-1}v]_k(m2^{-k})$ for $m=0,\dots, 2^k$, and there is a recursive way of calculating $[S_{k-1}f, S_{k-1}v]_k$:

\begin{lem}
   Let $f,v \in C([0,1],\R)$. Then
   \begin{align}\label{e:recursive quadratic variation}
      [S_k f,S_k v]_{k+1}(t) & = \frac{1}{2} [S_{k-1} f, S_{k-1}v]_k(t) + [S_{k-1} f, \Delta_k v]_{k+1}(t) + [\Delta_k f, S_k v]_{k+1}(t) + R_k(t)
   \end{align}
   for all $k\ge 1$ and all $t \in [0,1]$, where
   \begin{align*}
      R_k(t) := -\frac{1}{2} f_{\llcorner t^k \lrcorner,t} v_{\llcorner t^k \lrcorner,t} + f_{\llcorner t^k \lrcorner,\ulcorner t^{k+1}\urcorner  \wedge t} v_{\llcorner t^k \lrcorner,\ulcorner t^{k+1}\urcorner  \wedge t} + f_{\ulcorner t^{k+1}\urcorner  \wedge t, t} v_{\ulcorner t^{k+1}\urcorner  \wedge t,t}
   \end{align*}
   and $\llcorner t^k \lrcorner := \lfloor t 2^k \rfloor 2^{-k}$ and $\ulcorner t^{k}\urcorner := \llcorner t^k \lrcorner + 2^{-(k+1)}$. In particular, we obtain for $t=1$ that
   \begin{align}\label{e:cesaro formula quadratic variation}
      [f,v]_{k+1}(1) = \frac{1}{2}[f,v]_k(1) + \frac{1}{2} \sum_m f_{km} v_{km} = \frac{1}{2^{k+1}}\sum_{p\le k} \sum_m 2^{p} f_{pm} v_{pm}.
   \end{align}
   If moreover $\alpha \in (0,1)$ and $f,v \in \C^\alpha$, then $\lVert [S_{k-1} f, S_{k-1} g]_k - [f,g]_k \rVert_\infty \lesssim 2^{-2k\alpha} \lVert f \rVert_\alpha \lVert g \rVert_\alpha$.
\end{lem}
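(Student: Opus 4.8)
The plan is to reduce everything to a statement about the piecewise-linear interpolants $g := S_{k-1}f$ and $h := S_{k-1}v$. By Lemma~\ref{l:schauder functions give linear interpolation}, $g$ is the linear interpolation of $f$ at the dyadic points $\{m2^{-k}: 0 \le m \le 2^k\}$, so $g$ agrees with $f$ there and is affine on every generation-$k$ interval $J_m := [(m-1)2^{-k}, m2^{-k}]$, and the same holds for $h$ and $v$. Since $(f,v)\mapsto[f,v]_{k+1}$ is bilinear and $S_k = S_{k-1}+\Delta_k$, expanding $[S_kf,S_kv]_{k+1}$ into four terms and using $[\Delta_kf,S_kv]_{k+1} = [\Delta_kf,S_{k-1}v]_{k+1}+[\Delta_kf,\Delta_kv]_{k+1}$ shows that \eqref{e:recursive quadratic variation} is equivalent to the single identity $[g,h]_{k+1}(t) = \tfrac12[g,h]_k(t) + R_k(t)$, and this is what I would prove first.

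To establish it, I would split both sums over the intervals $J_m$. If $J_m$ lies to the left of $t$ (that is, $m2^{-k}\le t$), then $g$ and $h$ are affine on $J_m$, so each of the two generation-$(k+1)$ halves of $J_m$ carries exactly half the increment of $g$ (resp.\ $h$) over $J_m$; hence $J_m$ contributes to $[g,h]_{k+1}(t)$ precisely $\tfrac12$ of what it contributes to $[g,h]_k(t)$. Intervals $J_m$ to the right of $t$ contribute nothing to either side. What remains is the single interval $J_\ast = [\llcorner t^k\lrcorner, \llcorner t^k\lrcorner + 2^{-k}]$ straddling $t$, with midpoint $\ulcorner t^k\urcorner$: its contribution to $[g,h]_{k+1}(t)$ is $g_{\llcorner t^k\lrcorner,\ulcorner t^k\urcorner\wedge t}h_{\llcorner t^k\lrcorner,\ulcorner t^k\urcorner\wedge t} + g_{\ulcorner t^k\urcorner\wedge t,t}h_{\ulcorner t^k\urcorner\wedge t,t}$, while to $\tfrac12[g,h]_k(t)$ it contributes $\tfrac12 g_{\llcorner t^k\lrcorner,t}h_{\llcorner t^k\lrcorner,t}$. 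Because increments are additive, $g_{\llcorner t^k\lrcorner,t} = g_{\llcorner t^k\lrcorner,\ulcorner t^k\urcorner\wedge t} + g_{\ulcorner t^k\urcorner\wedge t,t}$ and similarly for $h$, so the elementary identity $ac+bd-\tfrac12(a+b)(c+d) = \tfrac12(a-b)(c-d)$ identifies the difference of the two straddling contributions with $R_k(t)$ (written in terms of $g = S_{k-1}f$, $h = S_{k-1}v$ and the midpoint $\ulcorner t^k\urcorner$ of $J_\ast$; note that on $J_\ast$ these do not coincide with the corresponding increments of $f$ and $v$). For $t=1$ one has $\llcorner 1^k\lrcorner = 1$, so all increments in $R_k(1)$ vanish and $R_k(1)=0$; combining this with the facts that $\Delta_k v$ vanishes at all generation-$k$ points and equals $v_{km}/2$ at the midpoint of $J_m$ — whence $[S_{k-1}f,\Delta_kv]_{k+1}(1)=0$ and $[\Delta_kf,S_kv]_{k+1}(1)=\tfrac12\sum_m f_{km}v_{km}$ by the same affineness argument — gives the first equality in \eqref{e:cesaro formula quadratic variation}; the closed form $\tfrac{1}{2^{k+1}}\sum_{p\le k}\sum_m 2^p f_{pm}v_{pm}$ then follows by induction on $k$, the base case being the identity $(a+b)(c+d)+(a-b)(c-d)=2(ac+bd)$ with $a=f_{0,1/2}$, $b=f_{1/2,1}$.

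For the last estimate, observe that $[S_{k-1}f,S_{k-1}g]_k(t)$ and $[f,g]_k(t)$ can only differ through the straddling interval $J_\ast$, since on every $J_m$ with $m2^{-k}\le t$ the increments of $S_{k-1}f$ and of $f$ agree (both are increments between generation-$k$ points). On $J_\ast$, using $\C^\alpha = C^\alpha$ with equivalent norms (Lemma~\ref{l:ciesielski}), one has $|f_{\llcorner t^k\lrcorner,t}|\lesssim\|f\|_\alpha 2^{-k\alpha}$ and, since $S_{k-1}f$ is affine on $J_\ast$ and agrees with $f$ at its endpoints, also $|(S_{k-1}f)_{\llcorner t^k\lrcorner,t}|\le|f_{\llcorner t^k\lrcorner,\llcorner t^k\lrcorner+2^{-k}}|\lesssim\|f\|_\alpha 2^{-k\alpha}$, and similarly for $g$; hence the difference of the two straddling terms is $\lesssim\|f\|_\alpha\|g\|_\alpha 2^{-2k\alpha}$, uniformly in $t$. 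I expect the only real difficulty to be keeping the truncation bookkeeping on $J_\ast$ straight — identifying which generation-$(k+1)$ half is active and pinning down the exact shape of $R_k$ — while everything else rests solely on the affine interpolation property of the partial sums $S_{k-1}$ and on the bilinearity of $[\,\cdot\,,\,\cdot\,]_k$.
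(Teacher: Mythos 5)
Your argument is correct and is essentially the paper's own proof: a direct computation exploiting that $S_{k-1}f$ and $S_{k-1}v$ are affine on the generation-$k$ dyadic intervals (so every interval lying fully to the left of $t$ contributes to $[\cdot,\cdot]_{k+1}$ exactly half of its contribution to $[\cdot,\cdot]_k$), the vanishing of the mixed terms at $t=1$ together with $[\Delta_k f,\Delta_k v]_{k+1}(1)=\tfrac12\sum_m f_{km}v_{km}$ for \eqref{e:cesaro formula quadratic variation}, and the agreement of $S_{k-1}f$ with $f$ at the points $m2^{-k}$ for the final estimate. The one substantive point is the boundary term, and there you are right to deviate from the printed statement: with $R_k$ read literally (increments of $f$ and $v$, and the point $\ulcorner t^{k+1}\urcorner$), identity \eqref{e:recursive quadratic variation} fails at non-dyadic $t$ — already $f=v=\mathrm{id}$, $k=1$, $t=3/8$ gives $[S_1f,S_1v]_2(t)=5/64$ while the right-hand side equals $9/64$ — whereas the residual your calculation produces, namely $-\tfrac12 g_{\llcorner t^k\lrcorner,t}h_{\llcorner t^k\lrcorner,t}+g_{\llcorner t^k\lrcorner,\ulcorner t^k\urcorner\wedge t}h_{\llcorner t^k\lrcorner,\ulcorner t^k\urcorner\wedge t}+g_{\ulcorner t^k\urcorner\wedge t,t}h_{\ulcorner t^k\urcorner\wedge t,t}$ with $g=S_{k-1}f$, $h=S_{k-1}v$, is the correct one (it gives $1/128$ in that example, as required). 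Since both versions of $R_k$ vanish at $t=1$, the consequences actually used later — \eqref{e:cesaro formula quadratic variation} and the estimate on $[S_{k-1}f,S_{k-1}v]_k-[f,v]_k$ — are unaffected. The only cosmetic omission in your write-up is the immediate identification $[S_kf,S_kv]_{k+1}(1)=[f,v]_{k+1}(1)$ and $[S_{k-1}f,S_{k-1}v]_k(1)=[f,v]_k(1)$ (no truncation occurs at $t=1$ and all evaluation points are dyadic of the relevant generation), which is needed to pass from your recursion for the interpolants to the stated Ces\`aro formula for $f$ and $v$ themselves.
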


\begin{proof}
   Equation~\eqref{e:recursive quadratic variation} follows from a direct calculation using the fact that $S_{k-1} f$ and $S_{k-1} v$ are affine on every interval $[t^0_{k\ell},t^1_{k\ell}]$ respectively $[t^1_{k\ell},t^2_{k\ell}]$ for $1 \le \ell \le 2^k$.
   The formula for $[f,v]_{k+1}(1)$ follows from the that $[\Delta_p f, \Delta_q v]_{k+1}(1) = 0$ unless $p=q$, and that $[\Delta_k f, \Delta_k v]_{k+1} = 1/2 \sum_m f_{km} v_{km}$.
   The estimate for $\lVert [S_{k-1} f, S_{k-1} g]_k - [f,g]_k \rVert_\infty$ holds because the two functions agree in all dyadic points $m 2^{-k}$.
\end{proof}

\begin{rmk}
   The Ces\`aro mean formula \eqref{e:cesaro formula quadratic variation} makes the study of existence of the quadratic variation accessible to ergodic theory. This was previously observed by Gantert~\cite{Gantert1994}. See also Gantert's thesis~\cite{Gantert1991}, Beispiel 3.29, where it is shown that ergodicity alone (of the distribution of $v$ with respect to suitable transformations on path space) is not sufficient to obtain convergence of $([v,v]_k(1))$ as $k$ tends to $\infty$.
\end{rmk}

%

It would be more natural to assume that for the controlling path $v$ the non-anticipating Riemann sums converge, rather than assuming that $(L(S_{k}v, S_k v))_k$ and $([v,v]_k)$ converge. This is indeed sufficient, as long as a uniform H\"older estimate is satisfied by the Riemann sums. 
We start by showing that the existence of the It\^{o} iterated integrals implies the existence of the quadratic variation.

\begin{lem}\label{l:ito implies quadratic variation}
   Let $\alpha \in (0,1/2)$ and let $v \in \C^\alpha(\R^d)$. Assume that the non-anticipating Riemann sums $(\ito_k(v,\dd v))_k$ converge uniformly to  $\ito(v,\dd v)$. Then also $([v,v]_k)_k$ converges uniformly to a limit $[v,v]$. 
   If moreover
   \begin{align}\label{e:discrete hoelder} \nonumber
      &\sup_k \sup_{0 \le m < m' \le 2^k} \frac{|\ito_k(v,\dd v)(m' 2^{-k}) - \ito_k(v,\dd v)(m 2^{-k}) - v(m2^{-k}) (v(m'2^{-k}) - v(m2^{-k}))|}{|(m'-m)2^{-k}|^{2\alpha}}\\
      &\hspace{20pt} = C < \infty,
   \end{align}
   then $[v,v] \in \C^{2\alpha}$ and $\lVert [v,v] \rVert_{2\alpha} \lesssim C + \lVert v \rVert_\alpha^2$.
\end{lem}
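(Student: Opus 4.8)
The crux is that the computation leading to \eqref{e:ito via piecewise linear} has a purely algebraic cousin for the quadratic variation. The plan is to first record the following identity: for \emph{any} continuous $v$ and any level-$k$ dyadic time $t = m2^{-k}$, $0\le m\le 2^k$, the partition $\{t^0_{k\ell}\wedge t, t^2_{k\ell}\wedge t\}$ covers $[0,t]$, so $\sum_\ell \bigl((v^iv^j)(t^2_{k\ell}\wedge t) - (v^iv^j)(t^0_{k\ell}\wedge t)\bigr) = v^i(t)v^j(t) - v^i(0)v^j(0)$; expanding each product increment by the discrete Leibniz rule $a'b' - ab = a(b'-b) + b(a'-a) + (a'-a)(b'-b)$ and summing over $\ell$, the three resulting groups of terms are exactly $\ito_k(v^i,\dd v^j)(t)$, $\ito_k(v^j,\dd v^i)(t)$ and $[v^i,v^j]_k(t)$, which gives
\begin{equation}\label{e:discrete ito qv}
   [v^i,v^j]_k(t) = v^i(t)v^j(t) - v^i(0)v^j(0) - \ito_k(v^i,\dd v^j)(t) - \ito_k(v^j,\dd v^i)(t)
\end{equation}
for every level-$k$ dyadic $t$ (for $d=1$, $i=j$ this reads $[v,v]_k(t) = v(t)^2 - v(0)^2 - 2\ito_k(v,\dd v)(t)$). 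This is the only step using the explicit form of $\ito_k$ and $[\cdot,\cdot]_k$, and it requires no regularity of $v$.

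For the first claim I would feed the assumed uniform convergence of $\ito_k(v,\dd v)$, hence of each symmetric combination $\ito_k(v^i,\dd v^j) + \ito_k(v^j,\dd v^i)$, into \eqref{e:discrete ito qv}. Since $\ito(v,\dd v)$ is a uniform limit of continuous functions, it is continuous, so the right-hand side of \eqref{e:discrete ito qv} converges uniformly over the level-$k$ grid to $v^iv^j - v^i(0)v^j(0) - \ito(v^i,\dd v^j) - \ito(v^j,\dd v^i)$. To handle non-dyadic times I would argue as in Remark~\ref{r:our ito vs nonanticipating riemann}: for $t$ between two consecutive level-$k$ dyadic points, $[v^i,v^j]_k(t) - [v^i,v^j]_k(\llcorner t^k\lrcorner)$ equals the single ``active'' summand $v^i_{\llcorner t^k\lrcorner,t}v^j_{\llcorner t^k\lrcorner,t}$, of modulus $\le \lVert v\rVert_\alpha^2 2^{-2k\alpha}$; together with the modulus of continuity of $\ito(v,\dd v)$ this yields uniform convergence of $[v^i,v^j]_k$ on all of $[0,1]$ to a continuous limit $[v^i,v^j]$ satisfying $[v^i,v^j] = v^iv^j - v^i(0)v^j(0) - \ito(v^i,\dd v^j) - \ito(v^j,\dd v^i)$ and $[v^i,v^j](0)=0$.

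For the second claim I would assume \eqref{e:discrete hoelder} and fix level-$k$ dyadic $s = m2^{-k} < t = m'2^{-k}$; subtracting \eqref{e:discrete ito qv} at $s$ from \eqref{e:discrete ito qv} at $t$ gives
\[
   [v^i,v^j]_k(t) - [v^i,v^j]_k(s) = \bigl(v^iv^j(t) - v^iv^j(s)\bigr) - \ito_k(v^i,\dd v^j)_{s,t} - \ito_k(v^j,\dd v^i)_{s,t}.
\]
Reading \eqref{e:discrete hoelder} componentwise, $\ito_k(v^i,\dd v^j)_{s,t} = v^i(s)v^j_{s,t} + O(C|t-s|^{2\alpha})$ and symmetrically, so the right-hand side equals $v^iv^j(t) - v^iv^j(s) - v^i(s)v^j_{s,t} - v^j(s)v^i_{s,t} + O(C|t-s|^{2\alpha})$; by the discrete Leibniz identity the leading part is the mixed increment $v^i_{s,t}v^j_{s,t}$, bounded by $\lVert v\rVert_\alpha^2|t-s|^{2\alpha}$. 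This gives $|[v^i,v^j]_k(t) - [v^i,v^j]_k(s)| \lesssim (C + \lVert v\rVert_\alpha^2)|t-s|^{2\alpha}$ uniformly in $k$ and over the grid; letting $k\to\infty$ and using density of the dyadics together with continuity of $[v^i,v^j]$ extends this to all $s,t\in[0,1]$, and taking $s=0$ with $[v^i,v^j](0)=0$ also controls $\lVert[v,v]\rVert_\infty$. Thus $\lVert[v,v]\rVert_{C^{2\alpha}}\lesssim C + \lVert v\rVert_\alpha^2$, and since $2\alpha\in(0,1)$, Ciesielski's isomorphism (Lemma~\ref{l:ciesielski}) identifies $C^{2\alpha}$ with $\C^{2\alpha}$ up to equivalent norms, so $[v,v]\in\C^{2\alpha}$ with $\lVert[v,v]\rVert_{2\alpha}\lesssim C + \lVert v\rVert_\alpha^2$.

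The only point requiring real care is the bookkeeping in passing from the dyadic-time identity \eqref{e:discrete ito qv} to the continuous-time statements, i.e. controlling $[v^i,v^j]_k$ and $\ito(v,\dd v)$ off the dyadic grid; beyond that the argument is elementary, and I foresee no serious obstacle.
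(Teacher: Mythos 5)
Your proof is correct and takes essentially the same route as the paper: the discrete Leibniz / integration-by-parts identity $v^iv^j(t)-v^i(0)v^j(0)=\ito_k(v^i,\dd v^j)(t)+\ito_k(v^j,\dd v^i)(t)+[v^i,v^j]_k(t)$ (which in fact holds at every $t$, not only dyadic $t$, since the truncated sums telescope), followed by the H\"older bound coming from \eqref{e:discrete hoelder} plus the cross term $v^i_{s,t}v^j_{s,t}$, and the identification of the classical $C^{2\alpha}$ norm with the $\C^{2\alpha}$ norm via Ciesielski. The remaining differences are only bookkeeping: the paper uses the identity at arbitrary times and approximates general $s<t$ by inner dyadic points, whereas you work on the grid with an explicit off-grid correction and a density argument.
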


\begin{proof}
   Let $t \in [0,1]$ and $1 \le i,j \le d$. Then
   \begin{align*}
      & v^i(t) v^j(t) - v^i(0)v^j(0) = \sum_{m = 1}^{2^k} \left[v^i(t^2_{km}\wedge t) v^j(t^2_{km}\wedge t) - v^i(t^0_{km}\wedge t) v^j(t^0_{km}\wedge t)\right] \\ 
      &\hspace{50pt} = \sum_{m = 1}^{2^k} \left[v^i(t^0_{km}) v^j_{t^0_{km}\wedge t, t^2_{km}\wedge t} + v^j(t^0_{km}) v^i_{t^0_{km}\wedge t, t^2_{km}\wedge t} + v^i_{t^0_{km}\wedge t, t^2_{km}\wedge t} v^j_{t^0_{km}\wedge t, t^2_{km}\wedge t}\right] \\
      &\hspace{50pt} = \ito_k(v^i,\dd v^j)(t) + \ito_k(v^j,\dd v^i)(t) + [v^i,v^j]_k(t),
   \end{align*}
   which implies the convergence of $([v,v]_k)_k$ as $k$ tends to $\infty$. For $0\le s<t\le 1$ this gives
   \begin{align*}
      ([v^i,v^j]_k)_{s,t} & = \bigl(v^i v^j\bigr)_{s,t} - \ito_k(v^i,\dd v^j)_{s,t} - \ito_k(v^j,\dd v^i)_{s,t} \\
      & = \left[v^i(s) v^j_{s,t} - \ito_k(v^i,\dd v^j)_{s,t}\right] + \left[v^j(s) v^i_{s,t} - \ito_k(v^j,\dd v^i)_{s,t}\right] + v^i_{s,t} v^j_{s,t},
   \end{align*}
   At this point it is easy to estimate $\lVert [v,v]\rVert_{2\alpha}$, where we work with the classical H\"older norm and not the $\C^{2\alpha}$ norm. Indeed let $0 \le s < t \le 1$. Using the continuity of $[v,v]$, we can find $k$ and $s\le s_k = m_s 2^{-k}< m_t 2^{-k} = t_k \le t$ with $|[v,v]|_{s,s_k} + |[v,v]|_{t,t_k}\le \lVert v\rVert_\alpha^2 |t-s|^{2\alpha}$. Moreover,
   \[
      |[v,v]|_{s_k,t_k} \le \Big(\sup_{\ell \ge k} \sup_{0 \le m < m' \le 2^\ell} \frac{|([v,v]_\ell)_{m2^{-\ell},m'2^{-\ell}}|}{|(m'-m)2^{-\ell}|^{2\alpha}} \Big)|t_k - s_k|^{2\alpha}  \le (2C + \lVert v \rVert_{\alpha}^2) |t-s|^{2\alpha}.
   \]
%
%
%
%
%
%
\end{proof}

\begin{rmk}
   The ``coarse-grained H\"older condition''~\eqref{e:discrete hoelder} is from~\cite{Perkowski2013Pathwise} and has recently been discovered independently by~\cite{Kelly2014}.
\end{rmk}

Similarly convergence of $(\ito_k(v,\dd v))$ implies convergence of $(L(S_k v, S_k v))_k$:

\begin{lem}\label{l:ito implies stratonovich}
   In the setting of Lemma~\ref{l:ito implies quadratic variation}, assume that~\eqref{e:discrete hoelder} holds.
   Then $L(S_k v, S_k v)$ converges uniformly as $k$ tends to $\infty$, and
   \begin{align*}
      \sup_k \lVert L(S_k v, S_k v)\rVert_{2\alpha} \lesssim C + \lVert v \rVert_\alpha^2.
   \end{align*}
\end{lem}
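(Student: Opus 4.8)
The plan is to combine the identity~\eqref{e:ito via piecewise linear} with the three--component decomposition of the integral. Since $S_N v$ is piecewise linear, $I(S_N v,\dd S_N v) = L(S_N v, S_N v) + S(S_N v, S_N v) + \pi_<(S_N v, S_N v)$ as an identity of functions, and $S(S_N v, S_N v)$, $\pi_<(S_N v, S_N v)$ are precisely the partial sums over $p\le N$ of the convergent series defining $S(v,v)\in\C^{2\alpha}$ and $\pi_<(v,v)\in\C^\alpha$; they converge uniformly, and by Lemma~\ref{l:symmetric part} one has $\sup_N \lVert S(S_N v, S_N v)\rVert_{2\alpha} \lesssim \lVert v\rVert_\alpha^2$. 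Evaluating~\eqref{e:ito via piecewise linear} with $k=N+1$ at a dyadic point $t=j2^{-(N+1)}$ and rearranging gives, for every such $t$ and understood componentwise,
\begin{equation*}
   L(S_N v, S_N v)(t) = \ito_{N+1}(v,\dd v)(t) + \tfrac12 [v,v]_{N+1}(t) - S(S_N v, S_N v)(t) - \pi_<(S_N v, S_N v)(t). \tag{$\star$}
\end{equation*}

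The heart of the argument is the uniform bound $\sup_N \lVert L(S_N v, S_N v)\rVert_{2\alpha} \lesssim C + \lVert v\rVert_\alpha^2$; since $2\alpha<1$ this is, by Lemma~\ref{l:ciesielski}, equivalent to a uniform classical $C^{2\alpha}$ bound. I would fix $i,j$ and two dyadic points $s<t$ of level $N+1$ and estimate the increment of $(\star)$ term by term: $|\ito_{N+1}(v^i,\dd v^j)_{s,t}-v^i(s)v^j_{s,t}|\le C|t-s|^{2\alpha}$ by~\eqref{e:discrete hoelder}; $|([v^i,v^j]_{N+1})_{s,t}|\lesssim (C+\lVert v\rVert_\alpha^2)|t-s|^{2\alpha}$ as in the proof of Lemma~\ref{l:ito implies quadratic variation}; and $|S(S_N v^i,S_N v^j)_{s,t}|\lesssim \lVert v\rVert_\alpha^2|t-s|^{2\alpha}$. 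For the remaining combination $v^i(s)v^j_{s,t}-\pi_<(S_N v^i,S_N v^j)_{s,t}$ one uses that for $p\ge N+1$ the block $\Delta_p v^j$ vanishes at every dyadic point of level $N+1$ (a multiple of $2^{-(N+1)}$ is a multiple of $2^{-p}$, hence lies outside the open support of each $\varphi_{pm}$ or at an endpoint), so that $(S_{p-1}v^i\Delta_p v^j)_{s,t}=0$ for $p\ge N+1$ and therefore $\pi_<(S_N v^i,S_N v^j)_{s,t}=\pi_<(v^i,v^j)_{s,t}$; the estimate of Example~\ref{ex:controlled old vs new} (with $f^v=v^i$, $v=v^j$, $\alpha+\beta=2\alpha<1$) then gives $|v^i(s)v^j_{s,t}-\pi_<(v^i,v^j)_{s,t}|\lesssim \lVert v\rVert_\alpha^2|t-s|^{2\alpha}$. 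Summing the four contributions, $|L(S_N v^i,S_N v^j)_{s,t}|\lesssim (C+\lVert v\rVert_\alpha^2)|t-s|^{2\alpha}$ for all dyadic $s,t$ of level $N+1$. To pass from this grid to arbitrary $s<t$, observe that $L(S_N v^i,S_N v^j)$ is piecewise polynomial of degree $\le2$ with, from $L(v,w)=\sum_p\bigl(\int_0^\cdot\Delta_p v\,\dd S_{p-1}w-\int_0^\cdot\dd(S_{p-1}v)\,\Delta_p w\bigr)$, a derivative bounded by $\sum_{p\le N}\lVert\Delta_p v\rVert_\infty\lVert\partial_t S_{p-1}v\rVert_\infty\lesssim \sum_{p\le N}2^{-p\alpha}2^{p(1-\alpha)}\lVert v\rVert_\alpha^2\lesssim 2^{N(1-2\alpha)}\lVert v\rVert_\alpha^2$, using $1-2\alpha>0$; splitting off the at most two end pieces of length $<2^{-(N+1)}$ and bounding $L$ there by this Lipschitz estimate, one checks (again via $1-2\alpha>0$) that the Lipschitz constant times the length is $\lesssim (C+\lVert v\rVert_\alpha^2)|t-s|^{2\alpha}$, while the middle piece is grid-aligned. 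Together with $\lVert L(S_N v^i,S_N v^j)\rVert_\infty\lesssim C+\lVert v\rVert_\alpha^2$ (take $s=0$) this yields the claimed uniform bound.

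For the uniform convergence I would argue as follows: by the bound just proved, $(L(S_N v,S_N v))_N$ is bounded in $C^{2\alpha}$ with $2\alpha<1$, hence precompact in $C([0,1])$ by Arzelà--Ascoli; on the other hand the right-hand side of $(\star)$ converges uniformly on $[0,1]$ — $\ito_k(v,\dd v)$ by hypothesis, $[v,v]_k$ by Lemma~\ref{l:ito implies quadratic variation}, and the two partial sums as noted in the first paragraph — so $L(S_N v,S_N v)(t)$ converges for every dyadic $t$. Any subsequential uniform limit of $(L(S_N v,S_N v))_N$ must therefore agree with this limit on the dense set of dyadics, hence equal it; thus $L(S_N v,S_N v)$ converges uniformly, and lower semicontinuity of the Hölder norm under uniform convergence transfers the bound to the limit, giving $\lVert L(v,v)\rVert_{2\alpha}\lesssim C+\lVert v\rVert_\alpha^2$.

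The step I expect to be the main obstacle is the treatment of the paraproduct term in the second paragraph: identifying the exact cancellation that allows one to replace $\pi_<(S_N v^i,S_N v^j)$ by the full paraproduct $\pi_<(v^i,v^j)$ along the dyadic grid of level $N+1$, so that the Example~\ref{ex:controlled old vs new} estimate becomes applicable. All the other ingredients — the identity $(\star)$, the estimates for $\ito_{N+1}$, $[v,v]_{N+1}$ and $S$, and the Lipschitz interpolation from the grid to all pairs $(s,t)$ — are routine bookkeeping with results already in place.
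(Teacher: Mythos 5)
Your proposal is correct and follows essentially the same route as the paper: the identity \eqref{e:ito via piecewise linear} rearranged at dyadic points, term-by-term increment estimates via \eqref{e:discrete hoelder}, Lemma~\ref{l:ito implies quadratic variation}, Lemma~\ref{l:symmetric part} and Example~\ref{ex:controlled old vs new} (the paraproduct cancellation at grid points you highlight is exactly what makes that Example applicable in the paper's argument too), plus the Lipschitz bound $\lesssim 2^{N(1-2\alpha)}\lVert v\rVert_\alpha^2$ on $\partial_t L(S_N v,S_N v)$ to interpolate off the grid, which is precisely the paper's estimate \eqref{e:ito implies stratonovich pr2}. The only cosmetic difference is that you deduce uniform convergence via Arzel\`a--Ascoli and identification of dyadic limits, while the paper combines the same two ingredients directly.
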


\begin{proof}
   Let $k \in \N$ and $0 \le m \le 2^k$, and write $t = m 2^{-k}$. Then we obtain from \eqref{e:ito via piecewise linear} that
   \begin{align}\label{e:ito implies stratonovich pr1}
      &L(S_{k-1} v, S_{k-1} v)(t) \\ \nonumber
      &\hspace{40pt} = \ito_k(v,\dd v)(t) + \frac{1}{2} [v,v]_k(t) - \pi_<(S_{k-1}v, S_{k-1}v)(t) - S(S_{k-1}v, S_{k-1}v)(t).
   \end{align}
   Let now $s,t \in [0,1]$. We first assume that there exists $m$ such that $t^0_{km} \le s < t \le t^2_{km}$. Then we use $\lVert \partial_t \Delta_q v \rVert_\infty \lesssim 2^{q(1-\alpha)} \lVert v \rVert_\alpha$ to obtain
   \begin{align}\label{e:ito implies stratonovich pr2}
      &|L(S_{k-1}v, S_{k-1}v)_{s,t}| \le \sum_{p<k}\sum_{q<p} \left| \int_s^t \Delta_p v(r) \dd \Delta_q v(r) - \int_s^t \dd \Delta_q v(r) \Delta_p v(r)\right| \\ \nonumber
      &\hspace{40pt} \lesssim \sum_{p<k} \sum_{q<p} |t-s| 2^{-p\alpha} 2^{q(1-\alpha)} \lVert v \rVert_\alpha^2 \lesssim |t-s| 2^{-k(2\alpha-1)} \lVert v \rVert_\alpha^2 \le |t-s|^{2\alpha} \lVert v \rVert_\alpha^2.
   \end{align}

   Combining \eqref{e:ito implies stratonovich pr1} and \eqref{e:ito implies stratonovich pr2}, we obtain the uniform convergence of $(L(S_{k-1} v,S_{k-1} v))$ from Lemma~\ref{l:ito implies quadratic variation} and from the continuity of $\pi_<$ and $S$.

   For $s$ and $t$ that do not lie in the same dyadic interval of generation $k$, let $\ulcorner s^k\urcorner = m_s 2^{-k}$ and $\llcorner t^k\lrcorner = m_t 2^{-k}$ be such that $\ulcorner s^k\urcorner - 2^{-k} < s \le \ulcorner s^k\urcorner$ and $\llcorner t^k\lrcorner \le t < \llcorner t^k\lrcorner + 2^{-k}$. In particular, $\ulcorner s^k\urcorner\le \llcorner t^k\lrcorner$. Moreover
   \begin{align*}
      |L(S_{k-1}v, S_{k-1}v)_{s,t}| &\le |L(S_{k-1}v, S_{k-1}v)_{s,\ulcorner s^k\urcorner}| + |L(S_{k-1}v, S_{k-1}v)_{\ulcorner s^k\urcorner,\llcorner t^k\lrcorner }| \\
      &\hspace{20pt} + |L(S_{k-1}v, S_{k-1}v)_{\llcorner t^k \lrcorner,t}|.
   \end{align*}
   Using~\eqref{e:ito implies stratonovich pr2}, the first and third term on the right hand side can be estimated by $(|\ulcorner s^k\urcorner -s|^{2\alpha} + |t-\llcorner t^k \lrcorner|^{2\alpha})\lVert v \rVert_\alpha^2 \lesssim |t-s|^{2\alpha} \lVert v \rVert_\alpha^2$. For the middle term we apply \eqref{e:ito implies stratonovich pr1} to obtain
   \begin{align*}
      |L(S_{k-1}v, S_{k-1}v)_{\ulcorner s^k\urcorner,\llcorner t^k\lrcorner }| & \le \left|\ito_k(v,\dd v)_{\ulcorner s^k\urcorner,\llcorner t^k\lrcorner } - v(\ulcorner s^k\urcorner)(v(\llcorner t^k\lrcorner) - v(\ulcorner s^k\urcorner))\right| \\
      &\hspace{20pt} + \left|v(\ulcorner s^k\urcorner)v_{\ulcorner s^k\urcorner,\llcorner t^k\lrcorner } - \pi_<(S_{k-1}v,S_{k-1}v)_{\ulcorner s^k\urcorner, \llcorner t^k\lrcorner}\right| \\
      &\hspace{20pt} + \frac{1}{2} \left|([v,v]_k)_{\ulcorner s^k\urcorner,\llcorner t^k\lrcorner }\right| + \left| S(S_{k-1}v, S_{k-1}v)_{\ulcorner s^k\urcorner,\llcorner t^k\lrcorner }\right| \\
      & \lesssim |\llcorner t^k\lrcorner - \ulcorner s^k\urcorner|^{2\alpha}\left( C + \lVert v \rVert_\alpha^2\right) \le |t-s|^{2\alpha} \left(C + \lVert v \rVert_\alpha^2\right),
   \end{align*}
   where Example~\ref{ex:controlled old vs new}, Lemma~\ref{l:ito implies quadratic variation}, and Lemma~\ref{l:symmetric part} have been used.
\end{proof}

%

It follows from the work of F\"ollmer that our pathwise It\^{o} integral satisfies It\^{o}'s formula:

\begin{cor}
   Let $\alpha \in (1/3, 1/2)$ and $v\in \C^\alpha(\R^d)$. Assume that the non-anticipating Riemann sums $(\ito_k(v,\dd v))_k$ converge uniformly to  $\ito(v,\dd v)$ 
   and let $F \in C^2(\R^d,\R)$. Then $(\ito_k(\dD F(v), \dd v))_k$ converges to a limit $\ito(\dD F(v), \dd v)$ that satisfies for all $t \in [0,1]$
   \begin{align*}
      F(v(t)) - F(v(0)) = \ito(\dD F(v), \dd v)(t) + \int_0^t \sum_{k,\ell=1}^d \partial_{x_k} \partial_{x_\ell} F(v(s)) \dd [v^k, v^\ell](s).
   \end{align*}
\end{cor}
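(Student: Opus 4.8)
The plan is to deduce the It\^o formula from F\"ollmer's pathwise It\^o calculus, whose only nontrivial hypothesis --- the existence of a quadratic variation of $v$ along the dyadic sequence --- has already been secured. Indeed, since $(\ito_k(v,\dd v))_k$ converges uniformly, Lemma~\ref{l:ito implies quadratic variation} shows that $([v^i,v^j]_k)_k$ converges uniformly to a continuous limit $[v^i,v^j]$ for all $1\le i,j\le d$. So the whole proof reduces to F\"ollmer's telescoping-plus-Taylor argument, which I would carry out directly rather than quoting it, since the discretization we use is exactly the dyadic one; in particular only $F\in C^2$ is used.

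First I would upgrade the convergence of the \emph{distribution functions} $[v^i,v^j]_k$ to weak convergence of the associated discrete second-order measures. Following the polarization already used in the proof of Lemma~\ref{l:ito implies quadratic variation}, write the $v^iv^j$-increments through the nondecreasing functions $[v^i+v^j]_k$ and $[v^i-v^j]_k$; for each fixed $t$ the positive measures $\sum_m\delta_{t^0_{km}}[(v^i\pm v^j)_{t^0_{km}\wedge t,\,t^2_{km}\wedge t}]^2$ have distribution functions that converge uniformly to the continuous limits $s\mapsto[v^i\pm v^j](s\wedge t)$, hence converge weakly to $\dd[v^i\pm v^j]|_{[0,t]}$, and these limit measures are atomless. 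Consequently, for every $g\in C([0,1])$,
\[
   \sum_{m=1}^{2^k} g\big(t^0_{km}\wedge t\big)\, v^i_{t^0_{km}\wedge t,\,t^2_{km}\wedge t}\, v^j_{t^0_{km}\wedge t,\,t^2_{km}\wedge t} \;\longrightarrow\; \int_0^t g(s)\,\dd[v^i,v^j](s),
\]
uniformly in $t\in[0,1]$ after a short equicontinuity argument (the increments entering the sum are $O(2^{-k\alpha})$ by H\"older continuity of $v$, and $[v^i\pm v^j]_k\to[v^i\pm v^j]$ uniformly).

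Next I would run the telescoping argument. Fix $F\in C^2$ and $t\in[0,1]$. Telescoping $F(v(t))-F(v(0))=\sum_m[F(v(t^2_{km}\wedge t))-F(v(t^0_{km}\wedge t))]$ over the dyadic intervals and Taylor-expanding each term around $v(t^0_{km}\wedge t)$ to second order,
\[
   F(v(t))-F(v(0)) = \sum_{m=1}^{2^k}\dD F\big(v(t^0_{km}\wedge t)\big)\, v_{t^0_{km}\wedge t,\,t^2_{km}\wedge t} + \frac12\sum_{m=1}^{2^k}\Big\langle \dD^2 F\big(v(t^0_{km}\wedge t)\big),\, v_{t^0_{km}\wedge t,\,t^2_{km}\wedge t}^{\otimes 2}\Big\rangle + \mathrm{Rem}_k(t).
\]
The first sum equals $\ito_k(\dD F(v),\dd v)(t)$ (the terms with $t^0_{km}>t$ vanish since the increment is $0$ there). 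Applying the previous step with $g=\partial_{x_j}\partial_{x_\ell}F(v)$ for each pair $(j,\ell)$ --- note $s\mapsto\dD^2 F(v(s))$ is continuous on $[0,1]$ --- the second sum converges uniformly in $t$ to $\tfrac12\sum_{j,\ell=1}^d\int_0^t\partial_{x_j}\partial_{x_\ell}F(v(s))\,\dd[v^j,v^\ell](s)$. Denoting by $\rho$ the modulus of continuity of $\dD^2 F(v)$ on $[0,1]$, one has $|\mathrm{Rem}_k(t)|\lesssim \rho\big(\lVert v\rVert_\alpha 2^{-k\alpha}\big)\sum_m|v_{t^0_{km}\wedge t,\,t^2_{km}\wedge t}|^2 = \rho\big(\lVert v\rVert_\alpha 2^{-k\alpha}\big)[v,v]_k(t)$, which tends to $0$ uniformly in $t$ since $\sup_k\lVert[v,v]_k\rVert_\infty<\infty$. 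Rearranging, $\ito_k(\dD F(v),\dd v)(t)$ converges uniformly to $F(v(t))-F(v(0))-\tfrac12\sum_{j,\ell}\int_0^t\partial_{x_j}\partial_{x_\ell}F(v(s))\,\dd[v^j,v^\ell](s)$; calling this limit $\ito(\dD F(v),\dd v)(t)$ yields both the asserted convergence and the It\^o formula (the second-order term carrying the natural factor $1/2$ produced by the Taylor expansion).

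The main obstacle is the middle step: Lemma~\ref{l:ito implies quadratic variation} only delivers (uniform) convergence of the \emph{scalar functions} $[v^i,v^j]_k$, whereas the Taylor argument needs the \emph{discrete bracket measures} to converge weakly against the continuous integrand $\dD^2 F(v)$, uniformly in the upper limit of integration. Overcoming it requires precisely the polarization into monotone parts $[v^i\pm v^j]_k$ together with the classical fact that uniformly convergent distribution functions with continuous limit induce weakly convergent Stieltjes measures, plus the short equicontinuity estimate to make the convergence uniform in $t$. Everything else --- the telescoping, the Taylor expansion, and the remainder bound --- is routine once $v$ is known to be H\"older continuous and to possess a quadratic variation; in particular no extra smoothness of $F$ beyond $C^2$ and no coarse-grained H\"older condition~\eqref{e:discrete hoelder} is needed, which is why this route is preferable here to one going through Proposition~\ref{p:ibp stratonovich} and the relation $\ito(f,\dd v)=I(f,\dd v)-\tfrac12[f,v]$.
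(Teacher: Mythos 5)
Your argument is correct and is, in substance, the paper's argument: the paper disposes of this corollary in one line by combining Lemma~\ref{l:ito implies quadratic variation} (which, exactly as you note, turns the uniform convergence of $(\ito_k(v,\dd v))_k$ into uniform convergence of $([v,v]_k)_k$) with a citation of Remarque~1 of F\"ollmer~\cite{Follmer1979}. What you do differently is to inline F\"ollmer's proof instead of citing it --- telescoping over the dyadic partition, second-order Taylor expansion, and the polarization into the monotone brackets $[v^i\pm v^j]_k$ to pass from uniform convergence of the distribution functions to weak convergence of the discrete bracket measures against the continuous integrand $\dD^2F(v)$; this is precisely F\"ollmer's device, and your remainder bound via the modulus of continuity of $\dD^2F(v)$ and $\sup_k\lVert[v,v]_k\rVert_\infty<\infty$ is the standard closing step. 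What your route buys is self-containedness and the explicit confirmation that only $F\in C^2$ and the dyadic quadratic variation are needed (no use of~\eqref{e:discrete hoelder}, no detour through Proposition~\ref{p:ibp stratonovich}); what the paper's route buys is brevity. One point worth recording: your Taylor expansion correctly produces the factor $\tfrac12$ in front of the second-order term, i.e.\ you prove $F(v(t))-F(v(0))=\ito(\dD F(v),\dd v)(t)+\tfrac12\sum_{k,\ell=1}^d\int_0^t\partial_{x_k}\partial_{x_\ell}F(v(s))\,\dd[v^k,v^\ell](s)$, which agrees with F\"ollmer's formula and with what one gets by combining Proposition~\ref{p:ibp stratonovich}, Theorem~\ref{t:pathwise ito integral} and~\eqref{e:quadratic variation controlled explicit}; the corollary as printed omits the $\tfrac12$, so your version, not the displayed one, is the correct identity.
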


\begin{proof}
   This is Remarque 1 of F\"ollmer~\cite{Follmer1979} in combination with Lemma~\ref{l:ito implies quadratic variation}.
\end{proof}


\section{Construction of the L\'evy area}\label{s:construction of levy area}

To apply our theory, it remains to construct the L\'evy area respectively the pathwise It\^{o} integrals for suitable stochastic processes. In Section~\ref{ss:hypercontractive area} we construct the L\'evy area for hypercontractive stochastic processes whose covariance function satisfies a certain ``finite variation'' property. In Section~\ref{ss:pathwise ito area for martingales} we construct the pathwise It\^{o} iterated integrals for some continuous martingales.

\subsection{Hypercontractive processes}\label{ss:hypercontractive area}

Let $X\colon [0,1] \to \R^d$ be a centered continuous stochastic process, such that $X^i$ is independent of $X^j$ for $i \neq j$.  We write $R$ for its covariance function, $R\colon [0,1]^2 \to \R^{d\times d}$ and $R(s,t) := (E(X^i_s X^j_t))_{1 \le i,j\le d}$. The increment of $R$ over a rectangle $[s,t] \times [u,v] \subseteq [0,1]^2$ is defined as
\begin{align*}
   R_{[s,t] \times [u,v]} := R(t,v) + R(s,u) - R(s,v) - R(t,u) := (E(X^i_{s,t} X^j_{u,v}))_{1 \le i, j \le d}.
\end{align*}
Let us make the following two assumptions.

\begin{itemize}
 \item[($\rho$--var)] There exists $C > 0$ such that for all $0 \le s < t \le 1$ and for every partition $s = t_0 < t_1 < \dots < t_n = t$ of $[s,t]$ we have
    \begin{align*}
       \sum_{i,j=1}^n | R_{[t_{i-1},t_i] \times [t_{j-1},t_j]}|^\rho \le C |t-s|.
    \end{align*}
 \item[(HC)] The process $X$ is hypercontractive, i.e. for every $m,n \in \N$ and every $r \ge 1$ there exists $C_{r,m,n} > 0$ such that for every polynomial $P: \R^n \rightarrow \R$ of degree $m$, for all $i_1, \dots, i_n \in \{1, \dots, d\}$, and for all $t_1, \dots, t_n \in [0,1]$
    \begin{align*}
       E(|P(X^{i_1}_{t_1}, \dots, X^{i_n}_{t_n})|^{2r}) \le C_{r,m,n} E(|P(X^{i_1}_{t_1}, \dots, X^{i_n}_{t_n})|^{2})^r.
    \end{align*}
\end{itemize}

These conditions are taken from~\cite{Friz2010c}, where under even more general assumptions it is shown that it is possible to construct the iterated integrals $I(X, \dd X)$, and that $I(X,\dd X)$ is the limit of $(I(X^n, \dd X^n))_{n \in \N}$ under a wide range of smooth approximations $(X^n)_n$ that converge to $X$.

\begin{ex}
   Condition (HC) is satisfied by all Gaussian processes. More generally, it is satisfied by every process ``living in a fixed Gaussian chaos''; see~\cite{Janson1997}, Theorem~3.50. Slightly oversimplifying things, this is the case if $X$ is given by polynomials of fixed degree and iterated integrals of fixed order with respect to a Gaussian reference process.

   Prototypical examples of processes living in a fixed chaos are Hermite processes. They are defined for $H \in (1/2,1)$ and $k\in \N$, $k \ge 1$ as
   \begin{align*}
      Z^{k,H}_t = C(H,k) \int_{\R^k} \left(\int_0^t \prod_{i=1}^k (s - y_i)^{-\left(\frac{1}{2} + \frac{1-H}{k}\right)}_+\dd s\right) \dd B_{y_1} \dots \dd B_{y_k},
   \end{align*}
   where $(B_y)_{y \in \R}$ is a standard Brownian motion, and $C(H,k)$ is a normalization constant. In particular, $Z^{k,H}$ lives in the Wiener chaos of order $k$. The covariance of $Z^{k,H}$ is
   \begin{align*}
      E(Z^{k,H}_s Z^{k,H}_t) = \frac{1}{2} \left( t^{2H} + s^{2H} + |t-s|^{2H}\right)
   \end{align*}
   Since $Z^{1,H}$ is Gaussian, it is just the fractional Brownian motion with Hurst parameter $H$. For $k=2$ we obtain the Rosenblatt process. For further details about Hermite processes see~\cite{Peccati2011}. However, we should point out that it follows from Kolmogorov's continuity criterion that $Z^{k,H}$ is $\alpha$--H\"older continuous for every $\alpha < H$. Since $H \in (1/2,1)$, Hermite processes are amenable to Young integration, and it is trivial to construct $L(Z^{k,H}, Z^{k,H})$.
\end{ex}

\begin{ex}
   Condition ($\rho$--var) is satisfied by Brownian motion with $\rho = 1$. More generally it is satisfied by the fractional Brownian motion with Hurst index $H$, for which $\rho = 1/(2H)$. It is also satisfied by the fractional Brownian bridge with Hurst index $H$. A general criterion that implies condition ($\rho$--var) is the one of Coutin and Qian~\cite{Coutin2002}: If $E(|X^i_{s,t}|^2) \lesssim |t-s|^{2H}$ and $|E(X^i_{s,s+h} X^i_{t,t+h})| \lesssim |t-s|^{2H-2} h^2$ for $i = 1, \dots, d$, then ($\rho$--var) is satisfied for $\rho = 1/(2H)$. For details and further examples see~\cite{Friz2010}, Section 15.2.
\end{ex}

\begin{lem}\label{l:rho-var to dyadic generation}
   Assume that the stochastic process $X:[0,1]\rightarrow \R$ satisfies ($\rho$--var). Then we have for all $p \ge -1$ and for all $M,N \in\N$ with $M \le N \le 2^{p}$ that
   \begin{align}\label{e:rho-var to dyadic generation}
      \sum_{m_1,m_2=M}^N |E(X_{pm_1} X_{pm_2})|^\rho \lesssim (N-M+1)2^{-p}.
   \end{align}
\end{lem}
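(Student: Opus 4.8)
The plan is to rewrite each covariance $E(X_{pm_1}X_{pm_2})$ as a signed sum of increments of $R$ over dyadic rectangles, and then to recognise the left-hand side of~\eqref{e:rho-var to dyadic generation} as a constant multiple of the double sum appearing in condition~($\rho$--var) for one well-chosen dyadic partition.

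The substantial case is $p\ge 1$. Here $X_{pm}=X_{t^0_{pm},t^1_{pm}}-X_{t^1_{pm},t^2_{pm}}$ for $m\ge 1$, while the terms with $m_1=0$ or $m_2=0$ vanish because $\chi_{p0}\equiv 0$; so we may assume $1\le M\le N\le 2^p$, replacing $M$ by $\max\{M,1\}$ if necessary. Expanding the product and using $E(X_{a,b}X_{c,d})=R_{[a,b]\times[c,d]}$ shows that $E(X_{pm_1}X_{pm_2})$ equals the signed sum of the four increments $R_{I\times J}$ with $I\in\{[t^0_{pm_1},t^1_{pm_1}],[t^1_{pm_1},t^2_{pm_1}]\}$ and $J\in\{[t^0_{pm_2},t^1_{pm_2}],[t^1_{pm_2},t^2_{pm_2}]\}$; these four rectangles tile $[t^0_{pm_1},t^2_{pm_1}]\times[t^0_{pm_2},t^2_{pm_2}]$. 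Since $|a_1+\dots+a_4|^\rho\lesssim|a_1|^\rho+\dots+|a_4|^\rho$ on $[0,\infty)$, we obtain $|E(X_{pm_1}X_{pm_2})|^\rho\lesssim\sum_{I,J}|R_{I\times J}|^\rho$.

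Next I would set $n:=2(N-M+1)$ and $t_i:=(M-1)2^{-p}+i\,2^{-p-1}$ for $0\le i\le n$, so that $t_0=(M-1)2^{-p}=t^0_{pM}$, $t_n=N2^{-p}=t^2_{pN}$, and the intervals $[t_{i-1},t_i]$, $1\le i\le n$, are exactly the level-$(p+1)$ dyadic intervals $\{[t^0_{pm},t^1_{pm}],[t^1_{pm},t^2_{pm}]:M\le m\le N\}$, each occurring once. The key combinatorial point is that the assignment $(m_1,m_2,I,J)\mapsto(I,J)$ is a bijection from the index set of the previous bound onto the set of pairs of these level-$(p+1)$ intervals: each such interval lies inside a unique level-$p$ interval and is its left or right half, hence determines $m$ and the choice $I$ (resp.\ $J$) uniquely. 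Summing the bound over $M\le m_1,m_2\le N$ and then invoking~($\rho$--var) for the partition $t_0<\dots<t_n$ of $[(M-1)2^{-p},N2^{-p}]$ gives
\[
  \sum_{m_1,m_2=M}^N|E(X_{pm_1}X_{pm_2})|^\rho \;\lesssim\; \sum_{i,j=1}^{n}\bigl|R_{[t_{i-1},t_i]\times[t_{j-1},t_j]}\bigr|^\rho \;\le\; C\,(t_n-t_0) \;=\; C\,(N-M+1)2^{-p},
\]
which is~\eqref{e:rho-var to dyadic generation}.

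Finally I would dispose of the low generations $p\in\{-1,0\}$, for which $M\le N\le 2^p$ leaves only a bounded number of pairs $(m_1,m_2)$, and each $X_{pm}$ is a fixed finite linear combination of the values of $X$ at $\{0,\tfrac12,1\}$. Each $|E(X_{pm_1}X_{pm_2})|$ is then bounded by a constant, obtained by applying~($\rho$--var) to the partition $\{0,\tfrac12,1\}$ together with additivity of rectangle increments (after the harmless normalisation $X(0)=0$); since $2^{-p}\ge1$ and there are $O(N-M+1)$ terms, the asserted estimate follows. I do not expect a genuine obstacle here: the only delicate points are keeping track of the four increments and their signs, and checking that the intervals entering the sum form one contiguous block of level-$(p+1)$ dyadic intervals, so that a single partition in condition~($\rho$--var) suffices.
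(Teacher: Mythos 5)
Your argument is correct and is essentially the paper's own proof, only with the details spelled out: the paper likewise writes $E(X_{pm_1}X_{pm_2})$ as the signed sum of the four rectangle increments $R_{[t^{i_1}_{pm_1},t^{i_1+1}_{pm_1}]\times[t^{i_2}_{pm_2},t^{i_2+1}_{pm_2}]}$, observes that the points $t^i_{pm}$, $M\le m\le N$, partition $[(M-1)2^{-p},N2^{-p}]$, and applies ($\rho$--var) to that single level-$(p+1)$ partition, dismissing $p\le 0$ as easy. The only small blemish is the ``normalisation $X(0)=0$'' for the low generations: it is unnecessary (and for $p=-1$ it would actually change the quantity $X_{-10}$, which is a point value rather than an increment), but that case is trivially fine anyway because there are only a bounded number of terms and each is controlled by second moments of $X$, which are finite since the covariance $R$ exists and the implicit constant may depend on the process.
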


\begin{proof}
   The case $p\le 0$ is easy so let $p \ge 1$. It suffices to note that
   \begin{align*}
      E(X_{pm_1} X_{pm_2}) & = E\left((X_{t^0_{pm_1},t^1_{pm_1}} - X_{t^1_{pm_1},t^2_{pm_1}})(X_{t^0_{pm_2},t^1_{pm_2}} - X_{t^1_{pm_2},t^2_{pm_2}})\right) \\
      & = \sum_{i_1, i_2 = 0,1} (-1)^{i_1 + i_2} R_{[t^{i_1}_{pm_1},t^{i_1+1}_{pm_1}]\times [t^{i_2}_{pm_2},t^{i_2+1}_{pm_2}]},
   \end{align*}
   and that $\{t^i_{pm}: i=0,1,2, m = M, \dots, N\}$ partitions the interval $[(M-1) 2^{-p}, N 2^{-p}]$.
%
\end{proof}

\begin{lem}\label{l:generation moment estimate}
   Let $X,Y: [0,1] \rightarrow \R$ be independent, centered, continuous processes, both satisfying ($\rho$--var) for some $\rho \in [1,2]$. Then for all $i, p \ge -1$, $q<p$, and $0 \le j \le 2^i$
   \begin{align*}
      E\Big[\Big|\sum_{m\le 2^p} \sum_{n\le 2^q} X_{pm} Y_{qn} \langle 2^{-i}\chi_{ij}, \varphi_{pm} \chi_{qn}\rangle\Big|^2\Big] \lesssim 2^{(p \vee i)(1/\rho - 4)} 2^{(q \vee i)(1-1/\rho)} 2^{-i} 2^{p(4-3/\rho)} 2^{q/\rho}.
   \end{align*}
\end{lem}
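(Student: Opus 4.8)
The plan is to fix the index $j$ and estimate the second moment directly, exploiting the independence of $X$ and $Y$. Abbreviating $c_{mn}:=\langle 2^{-i}\chi_{ij},\varphi_{pm}\chi_{qn}\rangle$ and using that the Schauder coefficients $X_{pm}$, $Y_{qn}$ are linear functionals of $X$, resp.\ $Y$, the mixed expectations factorize, so that
\[
   E\Big[\Big|\sum_{m,n}X_{pm}Y_{qn}c_{mn}\Big|^2\Big]=\sum_{m,m',n,n'}E[X_{pm}X_{pm'}]\,E[Y_{qn}Y_{qn'}]\,c_{mn}\,c_{m'n'}.
\]
The degenerate cases $i\wedge p\wedge q\le 0$ (and $j=0$, where $\chi_{i0}\equiv 0$) are trivial, the left-hand side being either zero or a finite sum of uniformly bounded terms, so from now on $i,p,q\ge 1$; I abbreviate $a:=p\vee i$, $b:=q\vee i$, and note $p\vee q\vee i=a$ because $q<p$.

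The heart of the matter is the combinatorics of the supports. For the fixed $j$, only those $m$ with $\mathrm{supp}\,\varphi_{pm}\cap\mathrm{supp}\,\chi_{ij}\neq\emptyset$ contribute, and these form a block $I_m\subseteq\{1,\dots,2^p\}$ with $\#I_m\lesssim 2^{a-i}$. Since $q<p$, the function $\chi_{qn}$ is constant on $\mathrm{supp}\,\varphi_{pm}$, so for each such $m$ there is exactly one index $n=n(m)$ with $c_{mn}\neq 0$; as $m$ runs over $I_m$ the values $n(m)$ fill a block $I_n\subseteq\{1,\dots,2^q\}$ with $\#I_n\lesssim 2^{b-i}$, and each element of $I_n$ is hit by $\lesssim 2^{a-b}$ indices $m$. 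Finally, since $q<p$ we never land in the exceptional case $p<q=i$ of Lemma~\ref{l:schauder coefficients of iterated integrals}, so $|c_{mn}|\le 2^{-2a+p+q}$ throughout.

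Inserting these bounds, everything reduces to estimating
\[
   \Sigma:=\sum_{m,m'\in I_m}\big|E[X_{pm}X_{pm'}]\big|\,\big|E[Y_{q\,n(m)}Y_{q\,n(m')}]\big|,
\]
since then $E[|\sum_{m,n}X_{pm}Y_{qn}c_{mn}|^2]\le 2^{2(-2a+p+q)}\Sigma$. I bound $\Sigma$ by Hölder's inequality over $I_m\times I_m$ with conjugate exponents $\rho$ and $\rho'=\rho/(\rho-1)$, controlling the $X$-covariances through their $\rho$-norm and the $Y$-covariances through their $\rho'$-norm. Lemma~\ref{l:rho-var to dyadic generation}, applied on $I_m$ and on $I_n$, gives $\sum_{m,m'\in I_m}|E[X_{pm}X_{pm'}]|^\rho\lesssim 2^{a-i}2^{-p}$ and $\sum_{n,n'\in I_n}|E[Y_{qn}Y_{qn'}]|^\rho\lesssim 2^{b-i}2^{-q}$, while its case $M=N$ together with Cauchy--Schwarz gives $\sup_{n,n'}|E[Y_{qn}Y_{qn'}]|\lesssim 2^{-q/\rho}$. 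Using $\rho\le 2\le\rho'$ to pass from the $\rho'$-norm to the $\rho$-norm of the $Y$-covariances at the cost of $(\sup|E[Y_{qn}Y_{qn'}]|)^{\rho'-\rho}$, and re-indexing the $Y$-sum from $I_m\times I_m$ to $I_n\times I_n$ through the at most $2^{2(a-b)}$-to-one map $(m,m')\mapsto(n(m),n(m'))$, one arrives at
\[
   E\Big[\Big|\textstyle\sum_{m,n}X_{pm}Y_{qn}c_{mn}\Big|^2\Big]\lesssim 2^{2(-2a+p+q)}\big(2^{a-i}2^{-p}\big)^{1/\rho}\big(2^{2(a-b)}\,2^{-q(\rho'-\rho)/\rho}\,2^{b-i}2^{-q}\big)^{1/\rho'}.
\]
A routine evaluation of this exponent, using $\rho\le 2$ and, crucially, that $q<p$ forces $(i-p)_+\le(i-q)_+$, shows it is at most $a(1/\rho-4)+b(1-1/\rho)-i+p(4-3/\rho)+q/\rho$, which is the claim. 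The boundary case $\rho=1$ is handled directly, bounding $\Sigma\le\big(\sum_{m,m'\in I_m}|E[X_{pm}X_{pm'}]|\big)\sup_{m,m'}|E[Y_{q\,n(m)}Y_{q\,n(m')}]|$ and using the same two inputs from Lemma~\ref{l:rho-var to dyadic generation}.

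The main obstacle is the combinatorial bookkeeping in the second and third steps: one has to use that for each $m$ only the single index $n(m)$ is active — treating $m$ and $n$ as independent variables ranging over their blocks yields a bound that is too weak — and one has to distribute the three multiplicity counts $\#I_m\lesssim 2^{a-i}$, $\#I_n\lesssim 2^{b-i}$, $\#\{m:n(m)=n\}\lesssim 2^{a-b}$ over the two Hölder factors in exactly the way that lets the ($\rho$--var) estimate of Lemma~\ref{l:rho-var to dyadic generation} absorb them. Once the set-up is right, the remaining computation of exponents is routine.
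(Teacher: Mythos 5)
Your proof is correct and follows essentially the same route as the paper's: independence plus the fact that for each $m$ only one $n(m)$ is active, Hölder with exponents $\rho,\rho'$ over the block $M_j$, interpolation of the $\rho'$-norm of the $Y$-covariances via their supremum (using $\rho\le 2$), and the ($\rho$--var) input of Lemma~\ref{l:rho-var to dyadic generation} on both blocks together with the multiplicity counts. The only cosmetic difference is that you use the sharper per-$n$ multiplicity $2^{(p\vee i)-(q\vee i)}$ where the paper uses $2^{p-q}$, which is why you need (and correctly supply) the observation $(i-p)_+\le(i-q)_+$ to land on the stated exponent.
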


\begin{proof}
   Since $p > q$, for every $m$ there exists exactly one $n(m)$, such that $\varphi_{pm}\chi_{qn(m)}$ is not identically zero. Hence, we can apply the independence of $X$ and $Y$ to obtain
   \begin{align*}
      &E\Bigl[\Bigl|\sum_{m\le 2^p} \sum_{n\le 2^q} X_{pm} Y_{qn} \langle 2^{-i}\chi_{ij}, \varphi_{pm} \chi_{qn}\rangle\Bigr|^2\Bigr] \\
      &\hspace{20pt} \le \sum_{m_1,m_2=0}^{2^p} \bigl|E(X_{pm_1}X_{pm_2})E(Y_{qn(m_1)}Y_{qn(m_2)}) \langle 2^{-i}\chi_{ij}, \varphi_{pm_1}\chi_{qn(m_1)}\rangle \langle 2^{-i}\chi_{ij}, \varphi_{pm_2}\chi_{qn(m_2)}\rangle\bigr|.
   \end{align*}
   Let us write $M_j := \{m: 0 \le m \le 2^p, \langle \chi_{ij}, \varphi_{pm}\chi_{qn(m)}\rangle\neq 0\}$. We also write $\rho'$ for the conjugate exponent of $\rho$, i.e. $1/\rho + 1/\rho' = 1$. H\"older's inequality and Lemma~\ref{l:schauder coefficients of iterated integrals} imply
   \begin{align*}
      &\sum_{m_1,m_2 \in M_j} \bigl|E(X_{pm_1}X_{pm_2})E(Y_{qn(m_1)}Y_{qn(m_2)}) \langle 2^{-i}\chi_{ij}, \varphi_{pm_1}\chi_{qn(m_1)}\rangle \langle 2^{-i}\chi_{ij}, \varphi_{pm_2}\chi_{qn(m_2)}\rangle\bigr| \\
      &\hspace{20pt} \lesssim \Biggl(\sum_{m_1,m_2 \in M_j} \bigl|E(X_{pm_1}X_{pm_2})\bigr|^\rho\Biggr)^{1/\rho} \Biggl(\sum_{m_1,m_2 \in M_j}\bigl|E(Y_{qn(m_1)}Y_{qn(m_2)})\bigr|^{\rho'} \Biggr)^{1/\rho'} (2^{-2 (p \vee i) + p + q})^2.
   \end{align*}
   Now write $N_j$ for the set of $n$ for which $\chi_{ij} \chi_{qn}$ is not identically zero. For every $\bar{n} \in N_j$ there are $2^{p-q}$ numbers $m \in M_j$ with $n(m) = \bar{n}$. Hence
   \begin{align*}
      &\Bigl(\sum_{m_1,m_2 \in M_j}\bigl|E(Y_{qn(m_1)}Y_{qn(m_2)})\bigr|^{\rho'} \Bigr)^{1/\rho'} \\
      &\hspace{60pt} \lesssim (2^{2(p-q)})^{1/\rho'} \bigg(\Big(\max_{n_1, n_2 \in N_j} \bigl|E(Y_{qn_1}Y_{qn_2})\bigr|\Big)^{\rho'-\rho} \sum_{n_1,n_2 \in N_j}\bigl|E(Y_{qn_1}Y_{qn_2})\bigr|^{\rho} \bigg)^{1/\rho'},
   \end{align*}
   where we used that $\rho \in [1,2]$ and therefore $\rho' - \rho \ge 0$ (for $\rho'=\infty$ we interpret the right hand side as $\max_{n_1, n_2 \in N_j} |E(Y_{qn_1}Y_{qn_2})|$). Lemma~\ref{l:rho-var to dyadic generation} implies that $\bigl(\bigl|E(Y_{qn_1}Y_{qn_2})\bigr|^{\rho'-\rho}\bigr)^{1/\rho'} \lesssim 2^{-q(1/\rho - 1/\rho')}$. Similarly we apply Lemma~\ref{l:rho-var to dyadic generation} to the sum over $n_1, n_2$, and we obtain
   \begin{align*}
      & (2^{2(p-q)})^{1/\rho'} \biggl(\Big(\max_{n_1, n_2 \in N_j} \bigl|E(Y_{qn_1}Y_{qn_2})\bigr|\Big)^{\rho'-\rho} \sum_{n_1,n_2 \in N_j}\bigl|E(Y_{qn_1}Y_{qn_2})\bigr|^{\rho} \biggr)^{1/\rho'}\\
      &\hspace{60pt} \lesssim  (2^{2(p-q)})^{1/\rho'} 2^{-q(1/\rho - 1/\rho')} (|N_j| 2^{-q})^{1/\rho'} = 2^{(q \vee i)/ \rho'} 2^{-i/\rho'} 2^{2p/\rho'} 2^{q(-2/\rho'-1/\rho)} \\
      &\hspace{60pt} = 2^{(q \vee i)(1-1/\rho)} 2^{i(1/\rho-1)} 2^{2p(1-1/\rho)} 2^{q(1/\rho-2)},
  \end{align*}
   where we used that $|N_j| = 2^{(q \vee i) - i}$. Since $|M_j| = 2^{(p \vee i) - i}$, another application of Lemma~\ref{l:rho-var to dyadic generation} yields
   \begin{align*}
      \Bigl(\sum_{m_1,m_2 \in M_j} \bigl|E(X_{pm_1}X_{pm_2})\bigr|^\rho\Bigr)^{1/\rho} \lesssim 2^{(p \vee i) / \rho} 2^{-i / \rho} 2^{-p/\rho}.
   \end{align*}
   The result now follows by combining these estimates:
   \begin{align*}
      &E\Bigl[\Bigl|\sum_{m\le 2^p} \sum_{n\le 2^q} X_{pm} Y_{qn} \langle 2^{-i}\chi_{ij}, \varphi_{pm} \chi_{qn}\rangle\Bigr|^2\Bigr]\\
      &\hspace{25pt} \lesssim \Bigl(\sum_{m_1,m_2 \in M_j} \bigl|E(X_{pm_1}X_{pm_2})\bigr|^\rho\Bigr)^{1/\rho} \Bigl(\sum_{m_1,m_2 \in M_j}\bigl|E(Y_{qn(m_1)}Y_{qn(m_2)})\bigr|^{\rho'} \Bigr)^{1/\rho'} (2^{-2 (p \vee i) + p + q})^2\\
      &\hspace{25pt} \lesssim \big(2^{(p \vee i) / \rho} 2^{-i / \rho} 2^{-p/\rho}\big) \big(2^{(q \vee i)(1-1/\rho)} 2^{i(1/\rho-1)} 2^{2p(1-1/\rho)} 2^{q(1/\rho-2)}\big)\big(2^{-4 (p \vee i) + 2p + 2q} \big)\\
      &\hspace{25pt} = 2^{(p \vee i)(1/\rho - 4)} 2^{(q \vee i)(1-1/\rho)} 2^{-i} 2^{p(4-3/\rho)} 2^{q/\rho}.
   \end{align*}
\end{proof}

\begin{thm}\label{t:existence of levy area}
   Let $X\colon [0,1] \to \R^d$ be a continuous, centered stochastic process with independent components, and assume that $X$ satisfies (HC) and ($\rho$--var) for some $\rho \in [1,2)$. Then for every $\alpha \in (0,1/\rho)$ almost surely
   \begin{align*}
      \sum_{N \ge 0} \left\lVert L(S_N X, S_N X) - L(S_{N-1} X, S_{N-1} X) \right\rVert_\alpha < \infty,
   \end{align*}
   and therefore $L(X,X) = \lim_{N \rightarrow \infty} L(S_N X,S_N X)$ is almost surely $\alpha$--H\"older continuous.
\end{thm}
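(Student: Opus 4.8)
The plan is to control the $\C^\alpha$-norm of each ``layer'' $L(S_NX,S_NX)-L(S_{N-1}X,S_{N-1}X)$ by feeding the second-moment estimate of Lemma~\ref{l:generation moment estimate} into hypercontractivity and then summing via Borel--Cantelli. First I would identify the layer: since $L$ is bilinear and antisymmetric and $S_N=S_{N-1}+\Delta_N$, the $(k,\ell)$-component of the $N$-th layer equals $L(S_{N-1}X^k,\Delta_NX^\ell)+L(\Delta_NX^k,S_{N-1}X^\ell)$ (the term $L(\Delta_NX^k,\Delta_NX^\ell)$ vanishes, there being no pair $p>q$ with $p=q=N$). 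The diagonal $k=\ell$ drops out by antisymmetry, and for $k\ne\ell$ the layer reduces to
\[
   D_N^{k\ell}:=\sum_{q<N}\int_0^\cdot\bigl(\Delta_NX^k\,\dd\Delta_qX^\ell-\Delta_NX^\ell\,\dd\Delta_qX^k\bigr).
\]
Computing its Schauder coefficients (using $\partial_t\Delta_qX^\ell=\sum_nX^\ell_{qn}\chi_{qn}$) shows that $(D_N^{k\ell})_{ij}$ is precisely the sum over $q<N$ of the random variables $\sum_{m,n}X^k_{Nm}X^\ell_{qn}\langle2^{-i}\chi_{ij},\varphi_{Nm}\chi_{qn}\rangle$, minus the same expression with $k,\ell$ swapped. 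This is exactly the quantity controlled by Lemma~\ref{l:generation moment estimate}, applied with $p=N$ and with $X^k,X^\ell$ in the roles of $X,Y$: they are independent and each satisfies ($\rho$--var).

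Next I would insert Lemma~\ref{l:generation moment estimate}, sum over $q<N$ using the triangle inequality in $L^2$, evaluate the geometric sums, and simplify with $\rho<2$. Distinguishing the regimes $i\le N$ and $i>N$ this gives, uniformly in $j$,
\[
   \lVert(D_N^{k\ell})_{ij}\rVert_{L^2}\lesssim 2^{-N/\rho}+2^{-i/2}2^{-N(1/\rho-1/2)}\quad(i\le N),\qquad \lVert(D_N^{k\ell})_{ij}\rVert_{L^2}\lesssim 2^{-2i}2^{N(2-1/\rho)}\quad(i>N).
\]
For fixed $i,N$ the coefficient $(D_N^{k\ell})_{ij}$ is a centered quadratic polynomial in finitely many evaluations of $X$ (at dyadic times with denominator $2^{N+1}$), so hypercontractivity (HC) promotes these to $\lVert(D_N^{k\ell})_{ij}\rVert_{L^{2r}}\lesssim_r\lVert(D_N^{k\ell})_{ij}\rVert_{L^2}$ for every $r\ge1$ (with the constant, as in the Example following (HC) for processes living in a fixed chaos, not depending on the number of time points).

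Finally I would take a union bound over the $2^i+1$ coefficients of each generation,
\[
   \E\bigl[\lVert D_N^{k\ell}\rVert_\alpha^{2r}\bigr]\le\sum_{i\ge-1}\sum_{j=0}^{2^i}2^{2ri\alpha}\,\E\bigl[|(D_N^{k\ell})_{ij}|^{2r}\bigr]\lesssim_r\sum_{i\ge-1}2^i\,2^{2ri\alpha}\,\lVert(D_N^{k\ell})_{ij}\rVert_{L^2}^{2r},
\]
and plug in the two-regime bound. Splitting the $i$-sum at $i=N$, each piece is dominated by $2^{N(1+2r\alpha-2r/\rho)}$, together with a harmless term $2^{-rN(2/\rho-1)}$ coming from the second summand when $\alpha<1/2$; the tail over $i>N$ converges because $r\ge1$. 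Taking $2r$-th roots gives $\E[\lVert D_N^{k\ell}\rVert_\alpha^{2r}]^{1/(2r)}\lesssim_r 2^{N(\frac1{2r}+\alpha-1/\rho)}+2^{-N(1/\rho-1/2)}$; since $\alpha<1/\rho$ we may fix $r$ so large that $\frac1{2r}<\frac1\rho-\alpha$, and since $\rho<2$ both terms are then summable in $N$. Hence $\E[\sum_N\lVert D_N^{k\ell}\rVert_\alpha]<\infty$, so the series converges absolutely in $\C^\alpha$ almost surely; its sum telescopes (using $L(S_{-1}X,S_{-1}X)=0$) to $\lim_NL(S_NX,S_NX)=L(X,X)$, which is $\alpha$-Hölder because $\C^\alpha=C^\alpha$ for $\alpha\in(0,1)$.

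The main obstacle I expect is the exponent bookkeeping: after summing Lemma~\ref{l:generation moment estimate} over $q$ one is left with the $i$-independent piece $2^{-N/\rho}$ (surviving the sum over $q<i$), and one must verify that multiplying it by both $2^{i\alpha}$ and the combinatorial factor $2^i$ from the union over $j$ still leaves something summable in $N$; this is exactly the place where $\alpha<1/\rho$ (rather than a smaller exponent) and the freedom to let $r\to\infty$ are both essential. A secondary issue is that (HC) as stated carries an $n$-dependent constant, so one must appeal to the chaos-uniform hypercontractivity pointed out in the Example after (HC).
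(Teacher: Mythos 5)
Your proposal is correct and takes essentially the same route as the paper: the same layer identification $L(S_NX,S_NX)-L(S_{N-1}X,S_{N-1}X)=\sum_{q<N}\sum_{m,n}(X^k_{Nm}X^\ell_{qn}-X^k_{qn}X^\ell_{Nm})\int_0^\cdot\varphi_{Nm}\,\dd\varphi_{qn}$, the same key input (Lemma~\ref{l:generation moment estimate} with $p=N$, summed over $q<N$), and the same use of (HC) with constants uniform in the number of time points --- a uniformity the paper also relies on implicitly, so your flagging it is consistent rather than a deviation. The only difference is in packaging the last step: you bound $\E\lVert D_N^{k\ell}\rVert_\alpha$ via an $\ell^{2r}$ union bound over the Schauder coefficients and sum first moments in $N$, whereas the paper applies Chebyshev and Borel--Cantelli to the events $\{|\cdot|>2^{-i\alpha}2^{-N/(2r)}2^{-q/(2r)}\}$; both require taking $r$ large for exactly the same exponent reason ($\alpha<1/\rho$ and $\rho<2$), and your exponent bookkeeping checks out.
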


\begin{proof}
   First note that $L$ is antisymmetric, and in particular the diagonal of the matrix $L(S_N X, S_N X)$ is constantly zero. For $k, \ell \in \{1, \dots, d\}$ with $k \neq \ell$ we have
   \begin{align*}
      & \lVert L(S_N X^k, S_N X^\ell) - L(S_{N-1} X^k, S_{N-1} X^\ell)\rVert_\alpha \\
      &\hspace{20pt} = \Bigl\lVert\sum_{q<N} \sum_{m,n} (X^k_{Nm} X^\ell_{qn} - X^k_{qn}X^\ell_{Nm}) \int_0^\cdot \varphi_{N m}(s) \dd \varphi_{qn}(s)\Bigr\rVert_\alpha \\
      &\hspace{20pt} \le \sum_{q<N} \Bigl\lVert \sum_{m,n} X^k_{Nm} X^\ell_{qn} \int_0^\cdot \varphi_{N m}(s) \dd \varphi_{qn}(s)\Bigr\rVert_\alpha + \sum_{q<N} \Bigl\lVert \sum_{m,n} X^\ell_{Nm} X^k_{qn} \int_0^\cdot \varphi_{N m}(s) \dd \varphi_{qn}(s)\Bigr\rVert_\alpha
   \end{align*}
   Let us argue for the first term on the right hand side, the arguments for the second one being identical. Let $r \ge 1$. Using the hypercontractivity condition (HC), we obtain
   \begin{align*}
      &\sum_{i,N} \sum_{j\le2^i} \sum_{q<N} P\Bigl( \Bigl|\sum_{m,n} X^\ell_{Nm} X^k_{qn} \langle 2^{-i} \chi_{ij}, \varphi_{N m} \chi_{qn}\rangle\Bigr| > 2^{-i\alpha} 2^{-N/(2r)} 2^{-q/(2r)} \Bigr) \\
      &\hspace{100pt} \le \sum_{i,N} \sum_{j\le 2^i} \sum_{q<N} E\Bigl( \Bigl|\sum_{m,n} X^\ell_{Nm} X^k_{qn} \langle 2^{-i} \chi_{ij}, \varphi_{N m} \chi_{qn}\rangle\Bigr|^{2r}\Bigr) 2^{ i\alpha 2 r} 2^{N + q}\\
      &\hspace{100pt} \lesssim \sum_{i,N} \sum_{j\le2^i} \sum_{q<N} E\Bigl( \Bigl|\sum_{m,n} X^\ell_{Nm} X^k_{qn} \langle 2^{-i} \chi_{ij}, \varphi_{N m} \chi_{qn}\rangle\Bigr|^{2}\Bigr)^r 2^{ i\alpha 2 r} 2^{N + q}.
   \end{align*}
   Now we can apply Lemma~\ref{l:generation moment estimate} to bound this expression by
   \begin{align*}
      &\sum_{i,N} \sum_{j\le2^i} \sum_{q<N} \bigl( 2^{(N \vee i)(1/\rho - 4)} 2^{(q \vee i)(1-1/\rho)} 2^{-i} 2^{N(4-3/\rho)} 2^{q/\rho}\bigr)^r 2^{ i\alpha 2 r} 2^{N + q}\\
      &\hspace{60pt} \lesssim \sum_{i} 2^i \sum_{N\le i} \sum_{q<N} 2^{ir(2\alpha - 4)} 2^{Nr(4-3/\rho + 1/r)} 2^{qr(1/\rho+1/r)} \\
      &\hspace{80pt} + \sum_{i} 2^i \sum_{N > i} \sum_{q\le i} 2^{ir(2\alpha - 1/\rho)} 2^{Nr(1/r - 2/\rho)} 2^{qr(1/\rho+1/r)} \\
      &\hspace{80pt} + \sum_{i} 2^i \sum_{N > i} \sum_{i<q<N} 2^{ir(2\alpha - 1)} 2^{Nr(1/r - 2/\rho)} 2^{qr(1+1/r)} \\
      &\hspace{60pt} \lesssim \sum_{i} 2^{ir(2\alpha + 3/r - 2/\rho)} + \sum_{i} \sum_{N > i}  2^{ir(2\alpha + 2/r)} 2^{Nr(1/r - 2/\rho)}\\
      &\hspace{80pt} +  \sum_{i} \sum_{N > i}  2^{ir(2\alpha +1/r - 1)} 2^{Nr(1 + 2/r - 2/\rho)}.
   \end{align*}
   For $r \ge 1$ we have $1/r - 2/\rho < 0$, because $\rho < 2$. Therefore, the sum over $N$ in the second term on the right hand side converges. If now we choose $r > 1$ large enough so that $1 + 3/r - 2/\rho < 0$ (and then also $2\alpha + 3/r - 2/\rho < 0$), then all three series on the right hand side are finite. Hence, Borel-Cantelli implies the existence of $C(\omega) > 0$, such that for almost all $\omega \in \Omega$ and for all $N, i, j$ and $q<N$
   \begin{align*}
      \Bigl|\sum_{m,n} X^\ell_{Nm}(\omega) X^k_{qn}(\omega) \langle 2^{-i} \chi_{ij}, \varphi_{N m} \chi_{qn}\rangle\Bigr| \le C(\omega) 2^{-i\alpha} 2^{-N/(2r)} 2^{-q/(2r)}.
   \end{align*}
   From here it is straightforward to see that for these $\omega$ we have
   \begin{align*}
      \sum_{N=0}^\infty \left\lVert L(S_N X(\omega), S_N X(\omega)) - L(S_{N-1} X(\omega), S_{N-1} X(\omega)) \right\rVert_\alpha < \infty.
   \end{align*}
\end{proof}

\subsection{Continuous martingales}\label{ss:pathwise ito area for martingales}

Here we assume that $(X_t)_{t \in [0,1]}$ is a $d$--dimensional continuous martingale. Of course in that case it is no problem to construct the It\^{o} integral $\ito(X,\dd X)$. But to apply the results of Section~\ref{s:pathwise ito}, we still need the pathwise convergence of $\ito_k(X,\dd X)$ to $\ito(X,\dd X)$ and the uniform H\"older continuity of $\ito_k(X,\dd X)$ along the dyadics. 

Recall that for a $d$--dimensional semimartingale $X=(X^1, \dots, X^d)$, the quadratic variation is defined as $[ X] = ([ X^i, X^j])_{1 \le i,j \le d}$. We also write $X_s X_{s,t} := (X^i_s X^j_{s,t})_{1 \le i, j \le d}$ for $s,t \in [0,1]$.

\begin{thm}\label{t:continuous martingale iterated integrals}
   Let $X=(X^1,\dots,X^d)$ be a $d$--dimensional continuous martingale. 
   Assume that there exist $p \ge 2$ and $\beta > 0$, such that $p \beta > 7/2$, and such that
   \begin{align}\label{e:martingale area assumption}
      E(|[ X ]_{s,t}|^p) \lesssim |t-s|^{2p\beta}
   \end{align}
   for all $s,t \in [0,1]$. Then $\ito_k(X,\dd X)$ almost surely converges uniformly to $\ito(X,\dd X)$. Furthermore, for all $\alpha \in (0, \beta - 1/p)$ we have $X \in \C^\alpha$ and almost surely
   \begin{align}\label{e:uniform hoelder along dyadics for martingale}
      \sup_k \sup_{0 \le \ell < \ell' \le 2^k} \frac{|\ito_k(X,\dd X)_{\ell 2^{-k}, \ell' 2^{-k}} - X_{\ell2^{-k}} X_{\ell2^{-k}, \ell' 2^{-k}}|}{|(\ell'-\ell)2^{-k}|^{2\alpha}} < \infty.
   \end{align}
\end{thm}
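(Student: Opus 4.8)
The plan is to prove the three assertions of the theorem separately: that $X\in\C^\alpha$ almost surely, that $\ito_k(X,\dd X)$ converges uniformly almost surely, and the coarse-grained Hölder bound~\eqref{e:uniform hoelder along dyadics for martingale}. (These are exactly the hypotheses under which Lemmas~\ref{l:ito implies quadratic variation} and~\ref{l:ito implies stratonovich} bring $X$ into the scope of the pathwise It\^o calculus of Section~\ref{s:pathwise ito}.) All three follow from the Burkholder--Davis--Gundy (BDG) inequality combined with the bracket bound~\eqref{e:martingale area assumption}. I write $X^{(k)}_r$ for the value of $X$ at the left endpoint of the generation-$k$ dyadic interval containing $r$, so that $\ito_k(X,\dd X)=\int_0^\cdot X^{(k)}_r\otimes\dd X_r$ is the It\^o integral of a bounded predictable step process. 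The recurring tool is: for a real martingale $\int_0^\cdot H_r\,\dd X^j_r$ with $|H_r|\le\Theta$ on $[s,t]$, BDG and Cauchy--Schwarz give $E[\sup_{s\le u\le t}|\int_s^u H_r\,\dd X^j_r|^p]\lesssim E[\Theta^{2p}]^{1/2}E[[X^j]_{s,t}^p]^{1/2}$, and by~\eqref{e:martingale area assumption} and Jensen $E[[X^j]_{s,t}^p]\le E[|[X]_{s,t}|^p]^{1/2}\cdot E[|[X]_{s,t}|^p]^{1/2}\lesssim|t-s|^{2p\beta}$.

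Applying this with $H_r=X^i_r-X^i_s$ gives $E[|X^i_{s,t}|^p]\lesssim|t-s|^{p\beta}$, hence $E[|X_{Pm}|^p]\lesssim 2^{-Pp\beta}$ for each Schauder coefficient, and since $\sum_{P,m}E[(2^{P\alpha}|X_{Pm}|)^p]\lesssim\sum_P 2^{P(1+\alpha p-p\beta)}<\infty$ precisely because $\alpha<\beta-1/p$, we obtain $\sup_{P,m}2^{P\alpha}|X_{Pm}|<\infty$ a.s., i.e.\ $X\in\C^\alpha$; as $\alpha<1$ we may use $\C^\alpha=C^\alpha$ (Lemma~\ref{l:ciesielski}). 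For uniform convergence, $\Theta_k:=\sup_{|u-v|\le2^{-k}}|X_{u,v}|$ splits over the $2^k$ dyadic cells, so $E[\Theta_k^{2p}]\lesssim 2^k2^{-2kp\beta}$, whence $E[\lVert\ito_k(X,\dd X)-\int_0^\cdot X_r\otimes\dd X_r\rVert_\infty^p]\lesssim E[\Theta_k^{2p}]^{1/2}E[[X]_{0,1}^p]^{1/2}\lesssim2^{-k(p\beta-1/2)}$; since $p\beta>7/2$, Markov and Borel--Cantelli yield a.s.\ uniform convergence to the It\^o integral, which we denote $\ito(X,\dd X)$.

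For~\eqref{e:uniform hoelder along dyadics for martingale} write $D_k=\{m2^{-k}:0\le m\le 2^k\}$ and, for $s,t\in D_k$, $\Phi^{(k)}_{s,t}:=\ito_k(X,\dd X)_{s,t}-X_sX_{s,t}$; also set $\Phi_{s,t}:=\int_s^t(X_r-X_s)\otimes\dd X_r=\ito(X,\dd X)_{s,t}-X_sX_{s,t}$. Using additivity of increments one checks $\Phi^{(k)}_{s,t}-\Phi^{(k)}_{s,u}-\Phi^{(k)}_{u,t}=X_{s,u}X_{u,t}=\Phi_{s,t}-\Phi_{s,u}-\Phi_{u,t}$ for $s<u<t$ in $D_k$, so $\psi^{(k)}_{s,t}:=\Phi^{(k)}_{s,t}-\Phi_{s,t}$ is \emph{additive} on $D_k$ and is the increment of the continuous martingale $\psi^{(k)}:=\ito_k(X,\dd X)-\ito(X,\dd X)=\int_0^\cdot(X^{(k)}_r-X_r)\otimes\dd X_r$; I bound $\Phi$ and $\psi^{(k)}$ separately. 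The recurring estimate with $H_r=X_r-X_s$ gives $E[|\Phi_{s,t}|^p]\lesssim|t-s|^{2p\beta}$; writing $M_j$ for the largest $|\Phi_{a,b}|$ over the $2^j$ dyadic intervals of generation $j$, we get $\sum_j E[(2^{2j\alpha}M_j)^p]\lesssim\sum_j 2^{j(1+2p\alpha-2p\beta)}<\infty$ (as $\alpha<\beta-1/(2p)$), so $\sup_j2^{2j\alpha}M_j<\infty$ a.s.; together with $\sup_j 2^{j\alpha}N_j\le\lVert X\rVert_{C^\alpha}$ ($N_j$ the analogous maximum of $|X_{a,b}|$) and the standard dyadic chaining that rebuilds $\Phi_{s,t}$ for arbitrary $s<t$ from dyadic sub-increments plus the $X\otimes X$ defects of the $\delta$-relation, this gives $\Phi\in C^{2\alpha}$ a.s.\ (Kolmogorov criterion for ``areas''). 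For $\psi^{(k)}$, its bracket satisfies $[\psi^{(k)}]_{s,t}=\int_s^t|X^{(k)}_r-X_r|^2\,\dd[X]_r\le\Theta_k^2\,[X]_{s,t}$, so the recurring estimate gives $E[|\psi^{(k)}_{a,b}|^p]\lesssim2^{-k(p\beta-1/2)}|b-a|^{p\beta}$; since $\psi^{(k)}$ is additive on $D_k$ the chaining produces \emph{no} defect terms and $\sup_{s,t\in D_k}|\psi^{(k)}_{s,t}|/|t-s|^{2\alpha}\lesssim A_k$, where $A_k$ is the maximum over $j\le k$ of $2^{2j\alpha}$ times the largest $|\psi^{(k)}_{a,b}|$ over dyadic $[a,b]$ of generation $j$, and $P(A_k>\lambda)\lesssim\lambda^{-p}2^{-k(p\beta-1/2)}\sum_{j\le k}2^{j(1+2p\alpha-p\beta)}$. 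A short case check on the sign of $1+2p\alpha-p\beta$, using $p\beta>7/2$ and $p(\beta-\alpha)>1$, shows the right-hand side is summable in $k$ for every fixed $\lambda>0$; hence by Borel--Cantelli $\limsup_k A_k\le\lambda$ a.s., and letting $\lambda\downarrow0$, $\sup_k A_k<\infty$ a.s. Adding the two bounds gives $|\Phi^{(k)}_{s,t}|\lesssim(\lVert\Phi\rVert_{C^{2\alpha}}+\sup_k A_k)|t-s|^{2\alpha}$ uniformly in $k$ and $s,t\in D_k$, which is~\eqref{e:uniform hoelder along dyadics for martingale}.

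The main difficulty is this last step: the double union bound, over the $\sim2^j$ dyadic intervals of each generation $j\le k$ and simultaneously over all $k$, must converge. Estimating $\Phi^{(k)}$ directly fails because $E[|\Phi^{(k)}_{a,b}|^p]$ carries no decay in $k$; the decomposition $\Phi^{(k)}=\Phi+\psi^{(k)}$ circumvents this, since $\Phi$ is independent of $k$ (treated once, via the Kolmogorov-type criterion for areas) while $\psi^{(k)}$ is additive \emph{and} carries the gain $2^{-k(p\beta-1/2)}$ — and it is precisely here that the hypothesis $p\beta>7/2$ is consumed.
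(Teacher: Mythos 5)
Your proof is correct, and while it shares the paper's basic skeleton -- Burkholder--Davis--Gundy plus the bracket bound \eqref{e:martingale area assumption}, a Kolmogorov-type criterion for the ``area'' $\Phi_{s,t}=\ito(X,\dd X)_{s,t}-X_sX_{s,t}$, and Borel--Cantelli for the approximation error -- it differs from the paper's argument in the decisive counting step. The paper controls $\sup_k\sup_{\ell<\ell'}|\ito(X,\dd X)_{\ell2^{-k},\ell'2^{-k}}-\ito_k(X,\dd X)_{\ell2^{-k},\ell'2^{-k}}|/|(\ell'-\ell)2^{-k}|^{2\alpha}$ by a moment bound of order $(\ell'-\ell)^{1/2+p\beta}2^{-2kp\beta}$ (obtained from a more careful disjoint-support computation than your $\Theta_k$ bound) followed by a \emph{direct double union bound} over all $\sim 4^k$ pairs $(\ell,\ell')$ and over $k$; summability of that union bound is exactly where $p\beta>7/2$ is needed (it forces the exponent $1/2+p\beta-2p\alpha<-2$). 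You instead exploit that $\psi^{(k)}=\ito_k(X,\dd X)-\ito(X,\dd X)$ has additive increments (the Chen defects of $\Phi^{(k)}$ and $\Phi$ cancel), so a chaining argument reduces the sup over all pairs to union bounds over the $2^j$ single dyadic intervals of each generation $j\le k$; your case check then only uses $p\beta>1/2$ and $p(\beta-\alpha)>3/4$, which follow from $\alpha<\beta-1/p$. So your closing claim that $p\beta>7/2$ is ``consumed'' in that step is inaccurate for your own argument -- your route in fact proves the theorem under weaker hypotheses, which is of course still a valid proof of the stated result; it is the paper's cruder two-parameter union bound that genuinely requires $7/2$. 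Two further, smaller differences: you establish the almost sure uniform convergence directly from $E\lVert\ito_k(X,\dd X)-\ito(X,\dd X)\rVert_\infty^p\lesssim 2^{-k(p\beta-1/2)}$ and Borel--Cantelli, whereas the paper deduces it from the dyadic estimate together with the H\"older continuity of $X$; and you get $X\in\C^\alpha$ via the Schauder coefficients rather than Kolmogorov's criterion (the paper even gets the larger range $\alpha<\beta-1/(2p)$, but the stated range suffices). One typographical slip: the bound $E[|X^i_{s,t}|^p]\lesssim|t-s|^{p\beta}$ comes from BDG with constant integrand $H\equiv1$ (plus Jensen), not from $H_r=X^i_r-X^i_s$, which is the choice you need later for $\Phi$; the estimate itself is correct.
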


\begin{proof}
   The H\"older continuity of $X$ follows from Kolmogorov's continuity criterion. Indeed, applying the Burkholder-Davis-Gundy inequality and \eqref{e:martingale area assumption} we have
   \begin{align*}
      E(|X_{s,t}|^{2p}) \lesssim \sum_{i=1}^d E(|X^i_{s,t}|^{2p}) \lesssim \sum_{i=1}^d E(|[ X^i]_{s,t}|^p) \lesssim E(|[ X ]_{s,t}|^p) \lesssim |t-s|^{2p\beta},
   \end{align*}
   so that $X \in \C^{\alpha}$ for all $\alpha \in (0, \beta - 1/(2p))$ and in particular for all $\alpha \in (0, \beta - 1/p)$. Since we will need it below, let us also study the regularity of the It\^{o} integral $\ito(X,\dd X)$: A similar application of Burkholder-Davis-Gundy gives
   \begin{align*}
      E(|\ito(X,\dd X)_{s,t} - X_s X_{s,t}|^p)\lesssim E\Bigl( \Bigl| \int_s^t |X_r - X_s|^2 \dd|[ X ]|_s \Bigr|^{\frac{p}{2}} \Bigr).
   \end{align*}
   We apply H\"older's inequality (here we need $p \ge 2$) to obtain
   \begin{equation*}
      E\Bigl( \Bigl| \int_s^t |X_r - X_s|^2 \dd|[ X ]|_s \Bigr|^{\frac{p}{2}} \Bigr) \lesssim E\Bigl( |[ X ]|_{s,t}^{\frac{p}{2} - 1} \int_s^t  |X_r - X_s|^p \dd|[ X ]|_s \Bigr).
   \end{equation*}
   Now the inequalities by Cauchy-Schwarz and then by Burkholder-Davis-Gundy yield
   \begin{align*}
      E\Bigl( |[ X ]|_{s,t}^{\frac{p}{2} - 1} \int_s^t  |X_r - X_s|^p \dd|[ X ]|_s \Bigr) & \lesssim E\Bigl( |[ X ]|_{s,t}^{\frac{p}{2}} \sup_{r \in [s,t]} |X_r - X_s|^p  \Bigr) \\
      &\le \sqrt{E\Bigl(\sup_{r \in [s,t]}|X_r - X_s|^{2p}\Bigr)} \sqrt{E(|[ X ]|_{s,t}^p)}\\
      &\lesssim E(|[ X ]_{s,t}|^p) \lesssim |t-s|^{2p\beta}.
   \end{align*}
   The Kolmogorov criterion for rough paths,  Theorem 3.1 of~\cite{Friz2013}, now implies that
   \begin{align}\label{e:continuous martingale pr1}
      |\ito(X,\dd X)_{s,t} - X_s X_{s,t}| \lesssim |t-s|^{2 \alpha}
   \end{align}
   almost surely for all $\alpha \in (0, \beta - 1/p)$.

   Let us get to the convergence of $\ito_k(X,\dd X)$. 
   As before, we have
   \begin{align*}
      & E(|\ito(X,\dd X)_{\ell2^{-k}, \ell'2^{-k}} - \ito_k(X,\dd X)_{\ell2^{-k}, \ell'2^{-k}}|^p) \\
      &\hspace{60pt} = E\Bigl( \Bigl| \int_{\ell 2^{-k}}^{\ell' 2^{-k}} \sum_{m = \ell}^{\ell'-1} \1_{[m2^{-k}, (m+1)2^{-k})}(r) X_{m 2^{-k},r} \dd X_s \Bigr|^p \Bigr) \\
      &\hspace{60pt} \lesssim E\Bigl( |[ X]|_{\ell 2^{-k}, \ell' 2^{-k}}^{\frac{p}{2}-1} \int_{\ell 2^{-k}}^{\ell' 2^{-k}} \Bigl|\sum_{m = \ell}^{\ell'-1} \1_{[m2^{-k}, (m+1)2^{-k})}(r) |X_{m 2^{-k},r}|^2 \Bigr|^{\frac{p}{2}} \dd|[ X ]|_s \Bigr).
   \end{align*}
   Since the terms in the sum all have disjoint support, we can pull the exponent $p/2$ into the sum, from where we conclude that
   \begin{align*}
      &E\Bigl( |[ X]|_{\ell 2^{-k}, \ell' 2^{-k}}^{\frac{p}{2}-1} \int_{\ell 2^{-k}}^{\ell' 2^{-k}} \sum_{m = \ell}^{\ell'-1} \1_{[m2^{-k}, (m+1)2^{-k})}(r) |X_{m 2^{-k},r}|^p  \dd|[ X ]|_s \Bigr)\\
      &\hspace{70pt} \lesssim \sqrt{E\Bigl(\sup_{r \in [s,t]} \Bigl| \sum_{m = \ell}^{\ell'-1} \1_{[m2^{-k}, (m+1)2^{-k})}(r) |X_{m 2^{-k},r}|^p \Bigr|^2 \Bigr)} \sqrt{E( |[ X]|_{\ell 2^{-k}, \ell' 2^{-k}}^p)}\\
      &\hspace{70pt} \lesssim \sqrt{\sum_{m=\ell}^{\ell'-1} E( |[ X]_{m 2^{-k},(m+1)2^{-k}}|^p)} \sqrt{E( |[ X]_{\ell 2^{-k}, \ell' 2^{-k}}|^p)}\\
      &\hspace{70pt} \lesssim \sqrt{(\ell'-\ell) (2^{-k})^{2p\beta}} \sqrt{|(\ell'-\ell) 2^{-k}|^{2p\beta}} = (\ell'-\ell)^{\frac{1}{2} + p\beta} 2^{-k 2 p \beta}.
   \end{align*}
   Hence, we obtain for $\alpha \in \R$ that
   \begin{align*}
      &P\left(|\ito(X,\dd X)_{\ell2^{-k}, \ell'2^{-k}} - \ito_k(X,\dd X)_{\ell2^{-k}, \ell'2^{-k}}| > |(\ell'-\ell)2^{-k}|^{2\alpha}\right) \\
      &\hspace{160pt} \lesssim \frac{(\ell'-\ell)^{\frac{1}{2} + p\beta} 2^{-k 2 p \beta}}{(\ell'-\ell)^{2p\alpha} 2^{-k 2 p \alpha}} = (\ell'-\ell)^{\frac{1}{2} + p\beta - 2p\alpha} 2^{-k2p (\beta - \alpha)}.
   \end{align*}
   If we set $\alpha = \beta - 1/(2p) - \varepsilon$, then $1/2 + p \beta - 2p\alpha = 3/2 - p \beta + 2p\varepsilon$. Now by assumption $p \beta > 7/2$ and therefore we can find $\alpha \in (0, \beta - 1/(2p))$ such that
   \begin{align}\label{e:continuous martingale pr2}
      1/2 + p \beta - 2p\alpha < -2.
   \end{align}
   Estimating the double sum by a double integral, we easily see that for all $\gamma<-2$
   \begin{align*}
      \sum_{\ell=1}^{2^k} \sum_{\ell'=\ell+1}^{2^k} (\ell'-\ell)^{\gamma} \lesssim 2^k.
   \end{align*}
   Therefore, we have for $\alpha \in (0, \beta - 1/(2p))$ satisfying \eqref{e:continuous martingale pr2}
   \begin{align*}
      &\sum_{\ell=1}^{2^k} \sum_{\ell'=\ell+1}^{2^k} P\left(|\ito(X,\dd X)_{\ell2^{-k}, \ell'2^{-k}} - \ito_k(X,\dd X)_{\ell2^{-k}, \ell'2^{-k}}| > |(\ell'-\ell)2^{-k}|^{2\alpha}\right) \\
      &\hspace{50pt} \lesssim 2^k 2^{-k2p (\beta - \alpha)}.
   \end{align*}
   Since $\alpha < \beta - 1/(2p)$, this is summable in $k$, and therefore Borel-Cantelli implies that
   \begin{align}\label{e:continuous martingale pr3}
      \sup_k \sup_{0 \le \ell < \ell' \le 2^k} \frac{|\ito(X,\dd X)_{\ell 2^{-k}, \ell' 2^{-k}} - \ito_k(X,\dd X)_{\ell 2^{-k}, \ell' 2^{-k}} |}{|(\ell'-\ell)2^{-k}|^{2\alpha}} < \infty
   \end{align}
   almost surely. We only proved this for $\alpha$ close enough to $\beta - 1/(2p)$, but of course then it also holds for all $\alpha'\le\alpha$. 
   The estimate \eqref{e:uniform hoelder along dyadics for martingale} now follows by combining \eqref{e:continuous martingale pr1} and \eqref{e:continuous martingale pr3}. The uniform convergence of $\ito_k(X,\dd X)$ to $\ito(X,\dd X)$ follows from \eqref{e:continuous martingale pr3} in combination with the H\"older continuity of $X$.
\end{proof}

\begin{ex}
   The conditions of Theorem~\ref{t:continuous martingale iterated integrals} are satisfied by 
   all It\^{o} martingales of the form $X_t = X_0 + \int_0^t \sigma_s \dd W_s$, as long as $\sigma$ satisfies $E(\sup_{s \in [0,1]} |\sigma_s|^{2p}) < \infty$
   for some $p > 7$. In that case we can take $\beta = 1/2$ so that in particular $\beta - 1/p > 1/3$, which means that $X$ and $\ito(X,\dd X)$ are sufficiently regular to apply the results of Section~\ref{s:pathwise ito}.
\end{ex}

\section{Pathwise stochastic differential equations}\label{s:sde}

We are now ready to solve SDEs of the form
\begin{equation}\label{e:sde}
    \dd y(t) = b(y(t)) \dd t + \sigma(y(t)) \dd v(t), \qquad y(0) = y_0,
\end{equation}
pathwise, where the ``stochastic'' integral $\dd v$ will be interpreted as $I(\sigma(y), \dd v)$ or $\ito(\sigma(y), \dd v)$.

Assume for example that $(v, L(v,v)) \in \C^\alpha \times \C^{2\alpha}$ for some $\alpha \in (1/3,1/2)$ are given, and that $b$ is Lipschitz continuous whereas $\sigma \in C^{1+\varepsilon}_b$ for some $\varepsilon$ with $2(\alpha+\varepsilon)>1$. Then Corollary~\ref{c:controlled under smooth} implies that $\sigma(y) \in \D^{\varepsilon\alpha}_v$ for every $y \in \D^\alpha_v$, and Theorem~\ref{t:rough path integral} then shows that $y_0 + \int_0^\cdot b(y(t)) \dd t + I(\sigma(y),\dd v) \in \D^\alpha_v$. Moreover, if we restrict ourselves  to the set
\[
   \mathcal{M}_\sigma = \{ y \in \D^\alpha_v : \lVert y^v \rVert_\infty \le \lVert \sigma \rVert_\infty \},
\]
then the map $\mathcal{M}_\sigma \ni (y,y^v) \mapsto \Gamma(y) =  (y_0 + \int_0^\cdot b(y(t)) \dd t + I(\sigma(y),\dd v), \sigma(y)) \in \mathcal{M}_\sigma$ satisfies the bound
\begin{align*}
   \lVert \Gamma(y) \rVert_{v,\alpha} & \lesssim |y_0| + |b(0)| + \lVert b \rVert_{\mathrm{Lip}} \lVert y \rVert_\infty + \lVert \sigma(y) \rVert_{v,\varepsilon\alpha} (\lVert v \rVert_\alpha + \lVert v \rVert_\alpha^2 + \lVert L(v,v)\rVert_{2\alpha}) + \lVert \sigma(y) \rVert_\alpha \\
   & \lesssim |y_0| + |b(0)| + (1 + \lVert b \rVert_{\mathrm{Lip}})(1 + \lVert \sigma \rVert_{C^{1+\varepsilon}_b}^{2+\varepsilon})(1 + \lVert v \rVert_\alpha^2 + \lVert L(v,v)\rVert_{2\alpha})(1 + \lVert y \rVert_{v,\varepsilon\alpha}),
\end{align*}
where we wrote $\lVert b \rVert_{\mathrm{Lip}}$ for the Lipschitz norm of $b$.

To pick up a small factor we apply a scaling argument. For $\lambda\in(0,1]$ we introduce the map $\Lambda_\lambda \colon \C^\beta \to \C^\beta$ defined by $\Lambda_\lambda f(t) = f(\lambda t)$. Then for $\lambda = 2^{-k}$ and on the interval $[0,\lambda]$ equation~\eqref{e:sde} is equivalent to
\begin{equation}\label{e:sde rescaled}
   \dd y^\lambda(t) = \lambda b(y^\lambda(t)) \dd t + \lambda^\alpha \sigma(y^\lambda(t)) \dd v^\lambda(t), \qquad y^\lambda(0) = y_0,
\end{equation}
where $y^\lambda = \Lambda_\lambda y$, $v^\lambda = \lambda^{-\alpha} \Lambda_\lambda v$. To see this, note that
\[
   \Lambda_\lambda I(f,\dd v) = \lim_{N\to \infty} \int_0^{\lambda\cdot} S_N f (t) \partial_t S_N v (t) \dd t = \lim_{N\to\infty} \int_0^{\cdot} (\Lambda_\lambda S_N f) (t) \partial_t (\Lambda_\lambda S_N v) (t) \dd t.
\]
But now $\Lambda_{2^{-k}} S_N g = S_{N-k} \Lambda_\lambda g$ for all sufficiently large $N$, and therefore
\[
   \Lambda_\lambda I(f,\dd v) = \lambda^\alpha I(\Lambda_\lambda f, \dd v^\lambda).
\]
For the quadratic covariation we have
\[
   \Lambda_\lambda [f,v] = [\Lambda_\lambda f, \Lambda_\lambda v] = \lambda^\alpha [\Lambda_\lambda f, v^\lambda],
\]
from where we get~\eqref{e:sde rescaled} also in the It\^o case. In other words we can replace $b$ by $\lambda b$, $\sigma$ by $\lambda^\alpha \sigma$, and $v$ by $v^\lambda$.

It now suffices to show that $v^\lambda$, $L(v^\lambda, v^\lambda)$, and $[v^\lambda, v^\lambda]$ are uniformly bounded in $\lambda$. Since only increments of $v$ appear in~\eqref{e:sde} we may suppose $v(0) = 0$, in which case it is easy to see that $\lVert \Lambda_\lambda v \rVert_\alpha \lesssim \lambda^\alpha \lVert v \rVert_\alpha$ and $\lVert [v^\lambda, v^\lambda]\rVert_{2\alpha} \lesssim \lVert [v,v]\rVert_{2\alpha}$. As for the L\'evy area, we have
\begin{align*}
   L(v^\lambda, v^\lambda) & = I(v^\lambda, \dd v^\lambda) - \pi_<(v^\lambda, v^\lambda) - S(v^\lambda, v^\lambda) = \lambda^{-2\alpha} \Lambda_\lambda I(v,\dd v) - \pi_<(v^\lambda, v^\lambda) - S(v^\lambda, v^\lambda) \\
   & = \lambda^{-2\alpha} \big\{ \Lambda_\lambda L(v,v) + [\Lambda_\lambda \pi_<(v,v) - \pi_<(\Lambda_\lambda v,\Lambda_\lambda v)] + [\Lambda_\lambda S(v,v) - S(\Lambda_\lambda v, \Lambda_\lambda v)]\big\},
\end{align*}
and therefore
\[
   \lVert L(v^\lambda, v^\lambda) \rVert_{2\alpha} \lesssim \lVert L(v,v) \rVert_{2\alpha} + \lVert S(v,v) \rVert_{2\alpha} + \lVert v \rVert_{\alpha}^2 + \lambda^{-2\alpha} \lVert \Lambda_\lambda \pi_<(v,v) - \pi_<(v^\lambda,v^\lambda) \rVert_{2\alpha}.
\]
But now
\begin{align*}
   |\Lambda_\lambda \pi_<(v,v)_{s,t} - \pi_<(v^\lambda,v^\lambda)_{s,t}| & \le |  \pi_<(v,v)_{\lambda s,\lambda t} - v(\lambda s) v_{\lambda s, \lambda t}| \\
   &\quad + |\Lambda_\lambda v(s) (\Lambda_\lambda v)_{s,t}  - \pi_<(\Lambda_\lambda v, \Lambda_\lambda v)_{s,t}| \\
   &\lesssim \lVert v \rVert_\alpha^2 |\lambda(t-s)|^{2\alpha} + \lVert \Lambda_\lambda v \rVert_\alpha |t-s|^{2\alpha} \\
   &\lesssim \lambda^{2\alpha} \lVert v \rVert_\alpha^2 |(t-s)|^{2\alpha}.
\end{align*}

From here we obtain the uniform boundedness of $\lVert v^\lambda \rVert_{v^\lambda,\alpha}$ for small $\lambda$, depending only on $b,\sigma, v, L(v,v)$ and possibly $[v,v]$, but not on $y_0$. If $\sigma \in C^{2+\varepsilon}_b$, similar arguments give us a contraction for small $\lambda$, and therefore we obtain the existence and uniqueness of solutions to~\eqref{e:sde rescaled}. Since all operations involved depend on $(v,L(v,v),y_0)$ and possibly $[v,v]$ in a locally Lipschitz continuous way, also $y^\lambda$ depends locally Lipschitz continuously on this extended data.

Then $y = \Lambda_{\lambda^{-1}} y^\lambda$ solves~\eqref{e:sde} on $[0,\lambda]$, and since $\lambda$ can be chosen independently of $y_0$, we obtain the global in time existence and uniqueness of a solution which depends locally Lipschitz continuously on $(v, L(v,v), y_0)$ and possibly $[v,v]$.

\begin{thm}\label{t:sde}
   Let $\alpha \in (1/3, 1)$ and let $(v,L(v,v))$ satisfy the assumptions of Theorem~\ref{t:rough path integral}. Let $y_0 \in \R^d$ and $\varepsilon>0$ be such that $\alpha(2+\varepsilon) > 2$ and let $\sigma \in C^{2+\varepsilon}_b$ and $b$ be Lipschitz continuous. Then there exists a unique $y \in \D^\alpha_v$ such that
   \[
      y = y_0 + \int_0^\cdot b(y(t)) \dd t + I(\sigma(y), \dd v).
   \]
   The solution $y$ depends locally Lipschitz continuously on $(v, L(v,v), y_0)$. If furthermore $[v,v]$ satisfies the assumptions of Corollary~\ref{c:pathwise ito with smooth quadratic variation}, then there also exists a unique solution $x\in \D^\alpha_v$ to
   \begin{align*}
      x & = y_0 + \int_0^\cdot b(x(t)) \dd t + \ito(\sigma(x), \dd v) \\
      & = y_0 + \int_0^\cdot b(x(t)) \dd t + I(\sigma(x), \dd v) - \frac{1}{2} \int_0^\cdot \dD \sigma(x(t)) \sigma(x(t)) \dd [v,v]_t
   \end{align*}
   and $x$ depends locally Lipschitz continuously on $(v, L(v,v), [v,v], y_0)$.
\end{thm}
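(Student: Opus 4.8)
The plan is to solve~\eqref{e:sde} by a Banach fixed point argument in the space of paracontrolled paths, localising in time by a scaling argument, then patching the local solutions, and finally reducing the It\^o equation to a Stratonovich one. Concretely, I would study the solution map
\[
   \Gamma(y) := \Bigl(y_0 + \int_0^\cdot b(y(t))\,\dd t + I(\sigma(y),\dd v),\ \sigma(y)\Bigr)
\]
on the closed subset $\mathcal{M}_\sigma := \{y \in \D^\alpha_v : \lVert y^v\rVert_\infty \le \lVert \sigma\rVert_\infty\}$ of $\D^\alpha_v$. By Corollary~\ref{c:controlled under smooth} we have $\sigma(y)\in\D^{\varepsilon\alpha}_v$ with derivative $\dD\sigma(y)y^v$, and by Theorem~\ref{t:rough path integral} the rough integral $I(\sigma(y),\dd v)$ lies in $\D^\alpha_v$ with derivative $\sigma(y)$; the Lebesgue integral $\int_0^\cdot b(y(t))\,\dd t$ is a Lipschitz path with vanishing paraproduct part that is dominated by the rough term in all relevant norms, so $\Gamma$ maps $\mathcal{M}_\sigma$ into $\D^\alpha_v$, and it preserves the constraint $\lVert y^v\rVert_\infty \le \lVert\sigma\rVert_\infty$ because the derivative of $\Gamma(y)$ is exactly $\sigma(y)$. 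The point of restricting to $\mathcal{M}_\sigma$ is that the superlinear factor $(1+\lVert y^v\rVert_\infty)^{1+\beta/\alpha}$ appearing in Corollary~\ref{c:controlled under smooth} becomes a constant depending only on $\sigma$, so that $\lVert\sigma(y)\rVert_{v,\varepsilon\alpha}\lesssim_{\sigma,v}1+\lVert y\rVert_{v,\varepsilon\alpha}$ and all estimates are affine in the unknown; the hypotheses $\sigma\in C^{2+\varepsilon}_b$ and $\alpha(2+\varepsilon)>2$ are precisely what is needed for Corollary~\ref{c:controlled under smooth} (including its local Lipschitz part) and Theorem~\ref{t:rough path integral} to apply with $\beta=\varepsilon\alpha$ and to close the loop $\Gamma\colon\mathcal{M}_\sigma\to\mathcal{M}_\sigma$.

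To obtain a contraction I would run the scaling argument of the discussion preceding the statement: for $\lambda=2^{-k}$ the equation on $[0,\lambda]$ is equivalent, via $\Lambda_\lambda f=f(\lambda\,\cdot)$, to~\eqref{e:sde rescaled} with $b,\sigma,v$ replaced by $\lambda b,\lambda^\alpha\sigma,v^\lambda$ where $v^\lambda=\lambda^{-\alpha}\Lambda_\lambda v$, using the identities $\Lambda_\lambda I(f,\dd v)=\lambda^\alpha I(\Lambda_\lambda f,\dd v^\lambda)$ and $\Lambda_\lambda[f,v]=\lambda^\alpha[\Lambda_\lambda f,v^\lambda]$. One checks that $\lVert v^\lambda\rVert_\alpha$, $\lVert L(v^\lambda,v^\lambda)\rVert_{2\alpha}$ and, in the It\^o case, $\lVert[v^\lambda,v^\lambda]\rVert_{2\alpha}$ stay bounded uniformly in $\lambda\le1$, the only delicate point being $\lambda^{-2\alpha}\lVert\Lambda_\lambda\pi_<(v,v)-\pi_<(v^\lambda,v^\lambda)\rVert_{2\alpha}\lesssim\lVert v\rVert_\alpha^2$, which follows from $|\pi_<(v,v)_{s,t}-v(s)v_{s,t}|\lesssim\lVert v\rVert_\alpha^2|t-s|^{2\alpha}$. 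The rescaled map then carries the extra factors $\lambda$ and $\lambda^\alpha$, so combining these uniform bounds with the local Lipschitz estimates of Corollary~\ref{c:controlled under smooth}, Theorem~\ref{t:rough path integral} and the continuity of Young's integral (Theorem~\ref{t:young integral}) shows that for $\lambda$ small enough, depending only on $b,\sigma,v,L(v,v)$ and not on $y_0$, $\Gamma$ is a contraction on a ball of $\mathcal{M}_\sigma$. Banach's theorem gives a unique local solution; since $\lambda$ is independent of the starting value one iterates over the finitely many intervals $[j\lambda,(j+1)\lambda]$, and by the locality of the integral (Remark~\ref{r:locality of integral}) the concatenation is again paracontrolled by $v$ on $[0,1]$, with local Lipschitz dependence on $(v,L(v,v),y_0)$ propagating through the finitely many steps.

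For the It\^o equation I would invoke Corollary~\ref{c:pathwise ito with smooth quadratic variation}: under the stated assumptions on $[v,v]$ one has $\ito(\sigma(x),\dd v)=I(\sigma(x),\dd v)-\tfrac12[\sigma(x),v]$, and the explicit representation~\eqref{e:quadratic variation controlled explicit} together with the chain rule $\sigma(x)^v=\dD\sigma(x)x^v=\dD\sigma(x)\sigma(x)$ gives $[\sigma(x),v]=\int_0^\cdot\dD\sigma(x(s))\sigma(x(s))\,\dd[v,v](s)$. Since $\dD\sigma\cdot\sigma\in C^{1+\varepsilon}_b$ and $[v,v]\in\C^{2\alpha}$, this correction is a Young integral that is locally Lipschitz in $x\in\D^\alpha_v$ and that scales under $\Lambda_\lambda$ exactly like the $b$-term, so the previous fixed point argument applies verbatim with the admissible modified drift and the enlarged data $(v,L(v,v),[v,v],y_0)$, yielding existence, uniqueness and local Lipschitz dependence.

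The main obstacle throughout is producing a genuine contraction: one must simultaneously extract a small factor from the time rescaling and guarantee, through the restriction to $\mathcal{M}_\sigma$, that every paracontrolled product, commutator and integral bound is affine rather than superlinear in the unknown; once this is in place, the global patching of the paracontrolled structure is routine precisely because $I(\cdot,\dd v)$ is a local operator.
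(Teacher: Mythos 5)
Your proposal is correct and follows essentially the same route as the paper: the fixed point map $\Gamma$ on $\mathcal{M}_\sigma$ with the bounds from Corollary~\ref{c:controlled under smooth} and Theorem~\ref{t:rough path integral}, the time-rescaling via $\Lambda_\lambda$ with uniform control of $v^\lambda$, $L(v^\lambda,v^\lambda)$ and $[v^\lambda,v^\lambda]$ to produce a contraction on a small interval independent of $y_0$, patching by locality of the integral, and the reduction of the It\^o case through the Young-integral representation of $[\sigma(x),v]$. This matches the argument given in Section~\ref{s:sde} preceding the theorem.
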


\begin{rmk}
   Since our integral is pathwise continuous, we can of course consider anticipating initial conditions and coefficients. Such problems arise naturally in the study of random dynamical systems; see for example~\cite{Imkeller1998,Arnold1999}. There are various approaches, for example filtration enlargements, Skorokhod integrals, or the noncausal Ogawa integral. While filtration enlargements are technically difficult, Skorokhod integrals have the disadvantage that in the anticipating case the integral is not always easy to interpret and can behave pathologically; see~\cite{Barlow1995}. With classical rough path theory these technical problems disappear. But then the integral is given as limit of compensated Riemann sums (see Proposition~\ref{p:Gubinelli rough paths}). With our formulation of the integral it is clear that we can indeed consider usual Riemann sums. An approach to pathwise integration which allows to define anticipating integrals without many technical difficulties while retaining a natural interpretation of the integral is the stochastic calculus via regularization of Russo and Vallois~\cite{Russo1993,Russo2007}. The integral notion studied by Ogawa~\cite{Ogawa1984, Ogawa1985} for anticipating stochastic integrals with respect to Brownian motion is based on Fourier expansions of integrand and integrator, and therefore related to our and the Stratonovich integral (see Nualart, Zakai~\cite{NualartZakai1989}). Similarly as the classical It\^o integral, it is interpreted in an $L^2$ limit sense, not a pathwise one.
\end{rmk}

\appendix

\section{Regularity for Schauder expansions with affine coefficients}\label{a:schauder with affine coefficients}

Here we study the regularity of series of Schauder functions that have affine functions as coefficients. First let us establish an auxiliary result.

\begin{lem}
   Let $s < t$ and let $f:[s,t] \rightarrow \L(\R^d,\R^n)$ and $g:[s,t]\rightarrow \R^d$ be affine functions. Then for all $r \in (s,t)$ and for all $h>0$ with $r-h \in [s,t]$ and $r+h \in [s,t]$ we have
   \begin{align}\label{e:second order increments quadratic}
      |(fg)_{r-h,r} - (fg)_{r,r+h}| \le 4 |t-s|^{-2} h^2 \lVert f\rVert_\infty \lVert g \rVert_\infty.
   \end{align}
\end{lem}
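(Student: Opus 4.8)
The plan is to exploit that the product of two affine functions is a polynomial of degree at most two in the time variable, so that the centred second-order increment on the left-hand side of~\eqref{e:second order increments quadratic} only feels the quadratic coefficient, which is the product of the two (constant) slopes.

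First I would rewrite the left-hand side as a centred second difference,
\[
   (fg)_{r-h,r} - (fg)_{r,r+h} = 2(fg)(r) - (fg)(r-h) - (fg)(r+h),
\]
and expand $f$ and $g$ around $r$. Writing $a \in \L(\R^d,\R^n)$ and $b \in \R^d$ for the slopes of $f$ and $g$ on $[s,t]$, we have $f(x) = f(r) + a(x-r)$ and $g(x) = g(r) + b(x-r)$, hence
\[
   (fg)(x) = f(r)g(r) + \bigl(f(r)b + a\,g(r)\bigr)(x-r) + (ab)(x-r)^2 .
\]
Evaluating at $x \in \{r-h,\,r,\,r+h\}$ and forming the centred second difference: the constant term contributes $0$, the linear term contributes $0$ (since $2\cdot 0 - (-h) - h = 0$), and the quadratic term contributes $(ab)\,(2\cdot 0 - h^2 - h^2) = -2(ab)h^2$. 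Therefore
\[
   (fg)_{r-h,r} - (fg)_{r,r+h} = -2(ab)h^2 ,
\]
which is an exact identity, valid because $[r-h,r+h]\subseteq[s,t]$ so that $f$ and $g$ really are affine at the three points involved.

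The second step is to bound the slopes. Since $a = f_{s,t}/(t-s)$ and $|f_{s,t}| \le 2\lVert f\rVert_\infty$, we get $|a| \le 2\lVert f\rVert_\infty/|t-s|$, and likewise $|b|\le 2\lVert g\rVert_\infty/|t-s|$; combining with $|ab|\le |a|\,|b|$ (operator norm of $a$ times Euclidean norm of $b$) and the identity above yields
\[
   \bigl| (fg)_{r-h,r} - (fg)_{r,r+h} \bigr| = 2|ab|\,h^2 \le 8\,|t-s|^{-2} h^2 \lVert f\rVert_\infty \lVert g\rVert_\infty .
\]

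There is essentially no substantial obstacle here: the only points needing a little care are the matrix--vector norm bookkeeping in $|ab|\le|a|\,|b|$ and the factor $2$ in the slope estimate (an affine function attaining $\pm\lVert f\rVert_\infty$ at the two endpoints has slope of size $2\lVert f\rVert_\infty/|t-s|$, and this is sharp). I would point out that this argument produces the constant $8$ rather than the $4$ written in~\eqref{e:second order increments quadratic}: taking $f(x)=g(x)=1-x$ on $[s,t]=[0,2]$ with $r=1$, $h=1$ gives left-hand side $|{-2}|=2$ while $4\cdot|t-s|^{-2}h^2\lVert f\rVert_\infty\lVert g\rVert_\infty = 1$, so the constant $4$ is too small. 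Since~\eqref{e:second order increments quadratic} is only used through Lemma~\ref{l:upm hoelder}, where all estimates are stated up to $\lesssim$, this discrepancy is harmless; if an explicit constant is desired one should simply replace $4$ by $8$.
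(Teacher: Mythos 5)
Your proof is correct and follows essentially the same route as the paper's: expand the product of the two affine functions, observe that the centred second difference annihilates the constant and linear parts and leaves only the product of the slopes times $h^2$, then bound each slope by $2\lVert\cdot\rVert_\infty/|t-s|$. You are also right about the constant: the paper's own computation records the second difference as $-h^2 b_1 b_2$, whereas the correct value is $-2h^2 b_1 b_2$, so the sharp constant in \eqref{e:second order increments quadratic} is $8$ rather than $4$, and your example $f=g$ with $f(x)=1-x$ on $[0,2]$, $r=h=1$, shows that $4$ indeed fails and $8$ is attained. As you note, this discrepancy is harmless, since the lemma is only used in the proof of Lemma~\ref{l:upm hoelder appendix} through estimates stated up to unspecified multiplicative constants.
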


\begin{proof}
   For $f(r) = a_1 + (r-s)b_1$ and $g(r) = a_2 + (r-s)b_2$ we have
   \[
      |(fg)_{r-h,r} - (fg)_{r,r+h}| = | 2 f(r) g(r) - f(r-h) g(r-h) - f(r+h)g(r+h)| = |-h^2 b_1 b_2|.
   \]
   Now $f_{s,t} = b_1(t-s)$ so that $|b_1| \le 2 |t-s|^{-1} \lVert f \rVert_\infty$, and similarly for $b_2$.
\end{proof}

Now we are ready to prove the regularity estimate.

\begin{lem}\label{l:upm hoelder appendix}
   Let $\alpha \in (0,2)$ and let $(u_{pm})\in \A^\alpha$. Then $\sum_{p,m} u_{pm} \varphi_{pm} \in \C^\alpha$ and
   \begin{align*}
      \Bigl\lVert \sum_{p,m} u_{pm} \varphi_{pm}\Bigr\rVert_\alpha \lesssim \lVert (u_{pm}) \rVert_{\A^\alpha}.
   \end{align*}
\end{lem}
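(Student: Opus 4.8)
The plan is to set $f := \sum_{p,m} u_{pm}\varphi_{pm}$ and to verify directly that its Schauder coefficients obey $2^{i\alpha}|f_{ij}| \lesssim \lVert(u_{pm})\rVert_{\A^\alpha}$ for every admissible index $(i,j)$, which is precisely the assertion $\lVert f\rVert_\alpha \lesssim \lVert(u_{pm})\rVert_{\A^\alpha}$. First I would check that the series actually defines a continuous function: for $p\ge1$ the functions $(\varphi_{pm})_m$ have pairwise disjoint supports up to endpoints where they vanish, so $\lVert\Delta_p f\rVert_\infty \le \sup_m \lVert u_{pm}\rVert_\infty\lVert\varphi_{pm}\rVert_\infty \le 2^{-p\alpha}\lVert(u_{pm})\rVert_{\A^\alpha}$, while the two low-frequency blocks satisfy $\lVert\Delta_{-1}f\rVert_\infty + \lVert\Delta_0 f\rVert_\infty \lesssim \lVert(u_{pm})\rVert_{\A^\alpha}$; since $\alpha>0$ the partial sums converge absolutely and uniformly, so $f\in C([0,1])$. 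The cases $(i,j)\in\{(-1,0),(0,0)\}$ are then immediate, since $\varphi_{pm}(0)=\varphi_{pm}(1)=0$ for all $p\ge1$, so that $f(0)$ and $f(1)$ only involve $u_{-10}$ and $u_{00}$.

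Next, fix $i\ge0$ and $1\le j\le2^i$, and expand $f_{ij}=\sum_{p,m}(u_{pm}\varphi_{pm})_{ij}$, where $(g)_{ij}:=2g(t^1_{ij})-g(t^0_{ij})-g(t^2_{ij})$ denotes the second-order increment functional; I split the sum into $p=i$, $p>i$ and $p<i$. For $p=i$ the function $\varphi_{im}$ vanishes at all three points $t^0_{ij},t^1_{ij},t^2_{ij}$ unless $m=j$, and then $(u_{ij}\varphi_{ij})_{ij}=2u_{ij}(t^1_{ij})\varphi_{ij}(t^1_{ij})=u_{ij}(t^1_{ij})$, which is bounded by $2^{-i\alpha}\lVert(u_{pm})\rVert_{\A^\alpha}$. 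For $p>i$, the support of $\varphi_{pm}$ has length $2^{-p}\le2^{-(i+1)}$ and the three evaluation points lie $2^{-(i+1)}$ apart, so for each $p$ only boundedly many $m$ can give a nonzero term, each bounded by $2\lVert u_{pm}\rVert_\infty\le2\cdot2^{-p\alpha}\lVert(u_{pm})\rVert_{\A^\alpha}$; summing the geometric series over $p>i$ (here $\alpha>0$) contributes $\lesssim2^{-i\alpha}\lVert(u_{pm})\rVert_{\A^\alpha}$.

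The substantial case is $p<i$, and it is here that the hypothesis $\alpha<2$ enters. The key geometric observation is that the breakpoints of the piecewise-quadratic function $u_{pm}\varphi_{pm}$, namely $t^0_{pm},t^1_{pm},t^2_{pm}$, have generation at most $p+1\le i$ and are therefore integer multiples of $2^{-i}$; hence none of them lies strictly between the consecutive generation-$i$ dyadics $t^0_{ij}$ and $t^2_{ij}$. Consequently $u_{pm}\varphi_{pm}$ restricted to $[t^0_{ij},t^2_{ij}]$ is either identically zero or the product of two affine functions — namely $u_{pm}$ and a single linear piece of $\varphi_{pm}$ living on an interval of length $2^{-(p+1)}$ — and moreover $t^1_{ij}$ is the midpoint of $[t^0_{ij},t^2_{ij}]$ with $t^1_{ij}-t^0_{ij}=t^2_{ij}-t^1_{ij}=2^{-(i+1)}$. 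Applying the preceding lemma then gives
\[
  |(u_{pm}\varphi_{pm})_{ij}| \;\le\; 4\,\bigl(2^{-(p+1)}\bigr)^{-2}\bigl(2^{-(i+1)}\bigr)^2\lVert u_{pm}\rVert_\infty\lVert\varphi_{pm}\rVert_\infty \;\lesssim\; 2^{2(p-i)}\,2^{-p\alpha}\lVert(u_{pm})\rVert_{\A^\alpha}.
\]
As before at most a bounded number of $m$ contribute for each $p$, so summing over $p<i$ yields $2^{-2i}\sum_{p<i}2^{p(2-\alpha)}\lVert(u_{pm})\rVert_{\A^\alpha}\lesssim2^{-i\alpha}\lVert(u_{pm})\rVert_{\A^\alpha}$, the geometric sum converging precisely because $\alpha<2$. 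Adding the three contributions gives $|f_{ij}|\lesssim2^{-i\alpha}\lVert(u_{pm})\rVert_{\A^\alpha}$ for all $(i,j)$, which is the claim. The only genuinely delicate point is the bookkeeping in this last step — pinning down which terms $(u_{pm}\varphi_{pm})_{ij}$ vanish and recognizing that for $p<i$ the restriction to $[t^0_{ij},t^2_{ij}]$ is a single affine$\times$affine product, so that the preceding lemma applies; the remaining estimates are routine geometric summations.
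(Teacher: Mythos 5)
Your proof is correct and follows essentially the same route as the paper's: you estimate the second-order Schauder coefficients directly, with the $p=i$ term reducing to $u_{ij}(t^1_{ij})$ and the $p<i$ terms handled by the affine-times-affine second-order increment lemma together with a geometric sum that converges precisely because $\alpha<2$. The only cosmetic difference is that for $p>i$ the paper notes the terms vanish identically (the evaluation points $t^0_{ij},t^1_{ij},t^2_{ij}$ are dyadic of generation at most $p$, where $\varphi_{pm}$ vanishes), whereas you bound them crudely by a geometric series using $\alpha>0$ --- both are fine.
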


\begin{proof}
   We need to examine the coefficients $2^{-q} \langle \chi_{qn}, \dd(\sum_{pm} u_{pm} \varphi_{pm})\rangle$. The cases $(q,n) = (-1,0)$ and $(q,n)=(0,0)$ are easy, so
   let $q \ge 0$ and $1 \le n \le 2^q$. If $p>q$, then $\varphi_{pm}(t^i_{qn}) = 0$ for $i=0,1,2$ and for all $m$, and therefore
   \begin{align*}
      2^{-q}\Big\langle \chi_{qn}, \dd\Big(\sum_{p,m} u_{pm} \varphi_{pm}\Big)\Big\rangle = 2^{-q}\sum_{p\le q} \sum_m \langle \chi_{qn}, \dd(u_{pm} \varphi_{pm})\rangle.
   \end{align*}
   If $p < q$, there is at most one $m_0$ with $\langle \chi_{qn}, \dd(u_{pm} \varphi_{pm})\rangle \neq 0$. The support of $\chi_{qn}$ is then contained in $[t^0_{pm_0},t^1_{pm_0}]$ or in $[t^1_{pm_0}, t^2_{pm_0}]$ and $u_{pm}$ and $\varphi_{pm}$ are affine on these intervals, so~\eqref{e:second order increments quadratic} yields
   \begin{align*}
      \sum_m |2^{-q}\langle \chi_{qn}, \dd (u_{pm} \varphi_{pm})\rangle| & = \sum_m |(u_{pm} \varphi_{pm})_{t^0_{qn},t^1_{qn}} - (u_{pm} \varphi_{pm})_{t^1_{qn},t^2_{qn}}| \\
      & \lesssim 2^{2p} 2^{-2q} \lVert u_{pm}\rVert_\infty \lVert \varphi_{pm} \rVert_\infty \lesssim 2^{p(2-\alpha)-2q} \lVert (u_{pm})\rVert_{\A^\alpha}.
   \end{align*}
   For $p=q$ we have $\varphi_{qn}(t^0_{qn}) = \varphi_{qn}(t^2_{qn}) = 0$ and $\varphi_{qn}(t^1_{qn}) = 1/2$, and thus
   \[
      \sum_m |2^{-q}\langle \chi_{qn}, \dd(u_{qm} \varphi_{qm})\rangle| = \left|(u_{qn} \varphi_{qn})_{t^0_{qn},t^1_{qn}}-(u_{qn} \varphi_{qn})_{t^1_{qn},t^2_{qn}}\right| = |u(t^1_{qn})|\lesssim 2^{-\alpha q} \lVert (u_{pm})\rVert_{\A^\alpha}. 
   \]
   Combining these estimate and using that $\alpha < 2$, we obtain
   \[
      2^{-q}\Big|\Big\langle \chi_{qn}, \dd\Big(\sum_{pm} u_{pm} \varphi_{pm}\Big)\Big\rangle\Big| \lesssim \sum_{p\le q} 2^{p(2-\alpha)-2q} \lVert (u_{pm})\rVert_{\A^\alpha} \simeq 2^{-\alpha q} \lVert (u_{pm})\rVert_{\A^\alpha},
   \]
   which completes the proof.
\end{proof}

\bibliography{quellen}{}
\bibliographystyle{amsalpha}

\end{document}